\newtheorem{remark}[theorem]{Remark}
\DeclareMathOperator{\sech}{sech}
\colorlet{linkequation}{red}
\newcommand*{\SavedEqref}{}
\let\SavedEqref\eqref
\renewcommand*{\eqref}[1]{%
  \begingroup
    \hypersetup{
      linkcolor=linkequation,
      linkbordercolor=linkequation,
    }%
    \SavedEqref{#1}%
  \endgroup
}
\title{Dirichlet-Neumann Waveform Relaxation Method for the 1D and 2D Heat and Wave Equations in Multiple subdomains}
\author{Martin J. Gander\thanks{Department of Mathematics,
    University of Geneva, Switzerland ({\tt martin.gander@unige.ch}).}
    \and Felix Kwok\thanks{Department of Mathematics, Hong Kong Baptist University, Hong Kong ({\tt felix\_kwok@hkbu.edu.hk}).}
   \and Bankim C. Mandal \thanks{Department of Mathematical Sciences, Michigan Technological University, USA ({\tt bmandal@mtu.edu}).}
}
\begin{document}

\maketitle

\begin{abstract}
We present a Waveform Relaxation (WR) version of the Dirichlet-Neumann
algorithm, formulated specially for multiple subdomains splitting for general parabolic and hyperbolic problems. This method is based on a non-overlapping spatial domain decomposition, and the iteration involves subdomain solves in space-time with corresponding interface condition, and finally organize an exchange of information between neighboring subdomains. Using a Fourier-Laplace transform argument, for a particular relaxation parameter, we present convergence analysis of the algorithm for the heat and wave equations. We prove superlinear convergence for finite time 
window in case of the heat equation, and finite step convergence for the wave equation. The convergence behavior however depends on the size of the subdomains and the time window length on which the algorithm is employed. We illustrate the performance of the algorithm with numerical results, and show a comparison with classical and optimized Schwarz WR methods.
\end{abstract}

\begin{keywords}
Dirichlet-Neumann, Waveform Relaxation, Heat equation, Wave equation, Domain decomposition.
\end{keywords}
 
\section{Introduction}
A recent version of Waveform Relaxation (WR) methods, namely Dirichlet-Neumann Waveform Relaxation (DNWR) has been introduced in \cite{Mandal,GKM1,GKM3} to solve space-time problems in parallel computer. This iterative method is based on a non-overlapping domain decomoposition in space, and the iteration requires subdomain solves with Dirichlet boundary
conditions followed by subdomain solves with Neumann boundary conditions. For a two-subdomain decomposition, we have proved superlinear convergence for 1D heat equation, and finite step convergence for 1D wave equation. In this paper, we extend the DNWR method to multiple subdomains, and present convergence analysis for one dimensional heat and wave equation. We also present convergence result for two dimensional wave equation. 

In a different viewpoint, the WR-type methods can be seen as an extension of DD methods for elliptic PDEs. The
systematic extension of the classical Schwarz method to time-dependent
parabolic problems was started in
\cite{GanStu,GilKel}; later optimized SWR methods have been introduced to achieve faster convergence or convergence with no overlap, see \cite{GH1} for parabolic problems, and
\cite{GHN} for hyperbolic problems. Recently Neumann-Neumann Waveform Relaxation (NNWR) algorithm is formulated from substructuring-type Neumann-Neumann algorithm \cite{BouGT,RoTal,TalRoV} to solve space-time problems; for more details see \cite{Kwok,Mandal2,BankThes}. The DNWR method thus can be regarded as an extension of Dirichlet-Neumann (DN) method for solving elliptic problems. The DN algorithm was first considered by Bj{\o}rstad \& Widlund \cite{BjWid} and further studied in \cite{BramPas}, \cite{MarQuar02} and \cite{MarQuar01}. The performance of the algorithm is now well
understood for elliptic problems, see for example the book \cite{TosWid} and the references therein.

We consider the following two PDEs on a bounded domain $\Omega\subset\mathbb{R}^{d},0<t<T$,
$d=1,2,3$, with a smooth boundary as our guiding examples: the parabolic problem
\begin{equation}
\begin{array}{rcll}
{\displaystyle \frac{\partial u}{\partial t}} & = & \nabla\cdot\left(\kappa(\boldsymbol{x},t)\nabla u\right)+f(\boldsymbol{x},t), & \boldsymbol{x}\in\Omega,\ 0<t<T,\\
u(\boldsymbol{x},0) & = & u_{0}(\boldsymbol{x}), & \boldsymbol{x}\in\Omega,\\
u(\boldsymbol{x},t) & = & g(\boldsymbol{x},t), & \boldsymbol{x}\in\partial\Omega,\ 0<t<T,
\end{array}\label{modelproblem1}
\end{equation}
where $\kappa(\boldsymbol{x},t)>0$, and the hyperbolic problem
\begin{equation}
\begin{array}{rcll}
\frac{\partial^{2}u}{\partial t^{2}}-c^{2}(\boldsymbol{x})\Delta u & = & f(\boldsymbol{x},t), & \boldsymbol{x}\in\Omega,0<t<T,\\
u(\boldsymbol{x},0) & = & v_{0}(\boldsymbol{x}), & \boldsymbol{x}\in\Omega,\\
u_{t}(\boldsymbol{x},0) & = & w_{0}(\boldsymbol{x}), & \boldsymbol{x}\in\Omega,\\
u(\boldsymbol{x},t) & = & g(\boldsymbol{x},t), & \boldsymbol{x}\in\partial\Omega,0<t<T,
\end{array}\label{modelwave}
\end{equation}
with $c(\boldsymbol{x})$ being a positive function.

We introduce in Section \ref{Section2} the non-overlapping DNWR algorithm with multiple subdomains for \eqref{modelproblem1}, and then 
analyze its convergence for the one dimensional heat equation. 
In Section \ref{Section3} we define this method to hyperbolic problems, and analyze convergence behavior for one dimensional wave equation. We extend our result to two dimensional wave equation and prove similar convergence behavior as in 1D in Section \ref{Section6}. Finally we present numerical results in Section \ref{Section7}, which illustrate our analysis.

\section{DNWR for parabolic problems}\label{Section2}

The Dirichlet-Neumann Waveform Relaxation (DNWR) method for parabolic problems with two subdomains is introduced in \cite{GKM1,Mandal}. In this section we generalize the 
algorithm to multiple subdomains
in one spatial dimension. We present different possible arrangements
(in terms of placing Dirichlet and Neumann boundary conditions) and
with a numerical implementation of these arrangements for a model
problem we determine the best possible one. We then formally define the DNWR method for \eqref{modelproblem1}, and analyze its convergence for the one dimensional heat equation. 

\subsection{Motivation}\label{Section2a}

Suppose we want to solve the 1D heat equation 
\begin{equation}
\begin{array}{rcll}
{\displaystyle \frac{\partial u}{\partial t}} & = & \Delta u, & x\in\Omega,\ 0<t<T,\\
u(x,0) & = & u_{0}(x), & x\in\Omega,\\
u(x,t) & = & g(x,t), & x\in\partial\Omega,\ 0<t<T,
\end{array}\label{MultiDNmodel}
\end{equation}
using the DNWR method. The spatial domain $\Omega=(0,5)$ is decomposed
into five non-overlapping subdomains $\Omega_{i}=(x_{i-1},x_{i}),i=1,\ldots,5$,
see the left panel of Figure \ref{NumFig1}, with three possible combinations of boundary
conditions along the interfaces, right panel of Figure \ref{NumFig1} and two arrangements in Figure \ref{NumFig2}. $D$ in blue denotes the Dirichlet
condition along the two physical boundaries, whereas $D$ and $N$
in red denote the Dirichlet and Neumann boundary conditions along
the interfaces. We are given Dirichlet traces $\left\{ g_{i}^{0}(t)\right\} _{i=1}^{4}$
as initial guesses along the interfaces $\left\{ x_{i}\right\} _{i=1}^{4}$.

\subsection*{First arrangement (A1):}

Here we extend the two subdomain-formulation \cite{GKM1,Mandal} to many subdomains in a natural
way, see the right panel of Figure \ref{NumFig1}. With the intial guesses, a Dirichlet subproblem
is solved in the first subdomain $\Omega_{1}$, followed by a series
of mixed Neumann-Dirichlet subproblem solves in the subsequent subdomains
($\Omega_{i},i=2,\ldots,5$), exactly like in the two-subdomain case. Thus
the DNWR algorithm is given by: for
$k=1,2\ldots$ and for $\theta\in(0,1]$ compute 
\[
\begin{array}{rcll}
\partial_{t}u_{1}^{k}-\partial_{xx}u_{1}^{k} & = & 0, & \textrm{in}\;\Omega_{1},t>0,\\
u_{1}^{k}(x,0) & = & u_{0}(x), & \textrm{in}\;\Omega_{1},\\
u_{1}^{k}(0,t) & = & g(0,t), & t>0,\\
u_{1}^{k}(x_{1},t) & = & g_{1}^{k-1}(t), & t>0,
\end{array}
\]
and for $i=2,\ldots,5$ 
\[
\begin{array}{rcll}
\partial_{t}u_{i}^{k}-\partial_{xx}u_{i}^{k} & = & 0, & \textrm{in}\;\Omega_{i},t>0,\\
u_{i}^{k}(x,0) & = & u_{0}(x), & \textrm{in}\;\Omega_{i},\\
\partial_{x}u_{i}^{k}(x_{i-1},t) & = & \partial_{x}u_{i-1}^{k}(x_{i-1},t), & t>0,\\
u_{i}^{k}(x_{i},t) & = & g_{i}^{k-1}(t), & t>0,
\end{array}
\]
with $g_{5}^{k}(t)=g(5,t)$ for the last subdomain along the physical
boundary. The updated interface values for the next step are then
defined as
\[
g_{i}^{k}(t)=\theta u_{i+1}^{k}(x_{i},t)+(1-\theta)g_{i}^{k-1}(t).
\]
\begin{figure}
  \centering
  \includegraphics[width=0.49\textwidth]{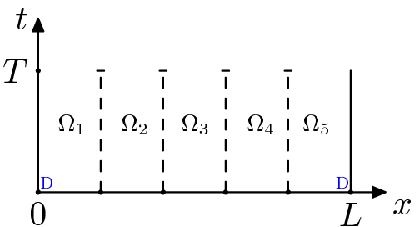}
  \includegraphics[width=0.49\textwidth]{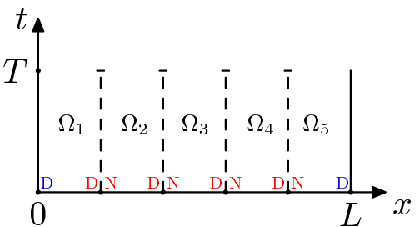}
  \caption{Decomposition of the domain on the left, and arrangement of boundary conditions due to A1 on the right}
  \label{NumFig1}
\end{figure}
\begin{figure}
  \centering
  \includegraphics[width=0.49\textwidth]{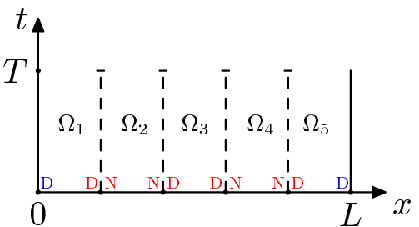}
  \includegraphics[width=0.49\textwidth]{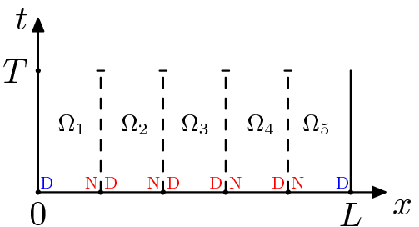}
  \caption{Two different arrangements of boundary conditions: A2 on the left, and A3 on the right}
  \label{NumFig2}
\end{figure}
We now discretize \eqref{MultiDNmodel} using standard centered
finite differences in space and backward Euler in time, and solve
the equation numerically using the above algorithm for different time
windows. For the test we choose $u_{0}(x)=0,g(x,t)=(x+1)t,g_{i}^{0}(t)=t^{2},t\in[0,T]$.
Figure \ref{FigDNmA1} gives the convergence curves for different values of
the parameter $\theta$ for $T=2$ on the left, and $T=20$ on the
right. 
\begin{figure}
\centering
\includegraphics[width=0.49\textwidth]{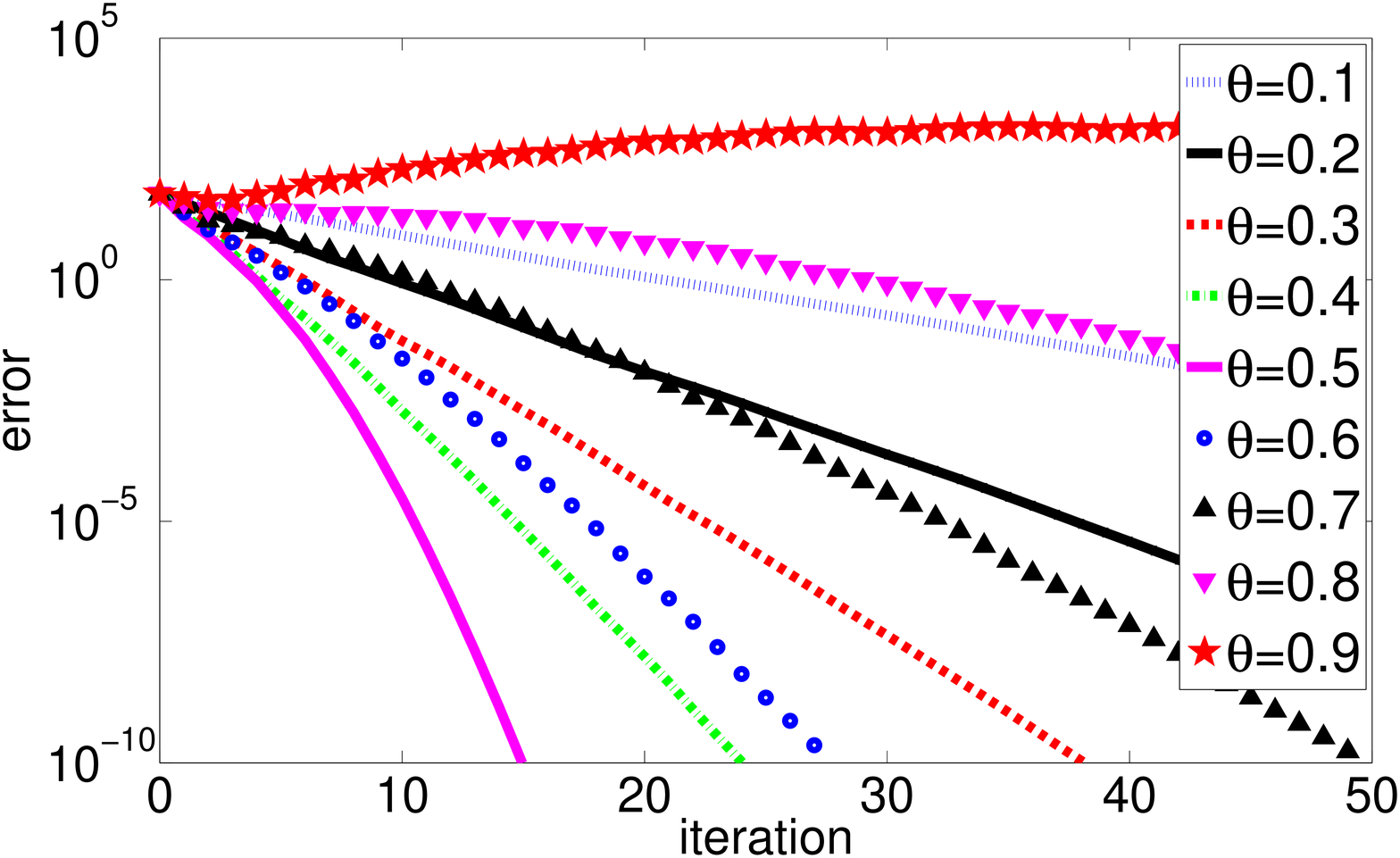}
\includegraphics[width=0.49\textwidth]{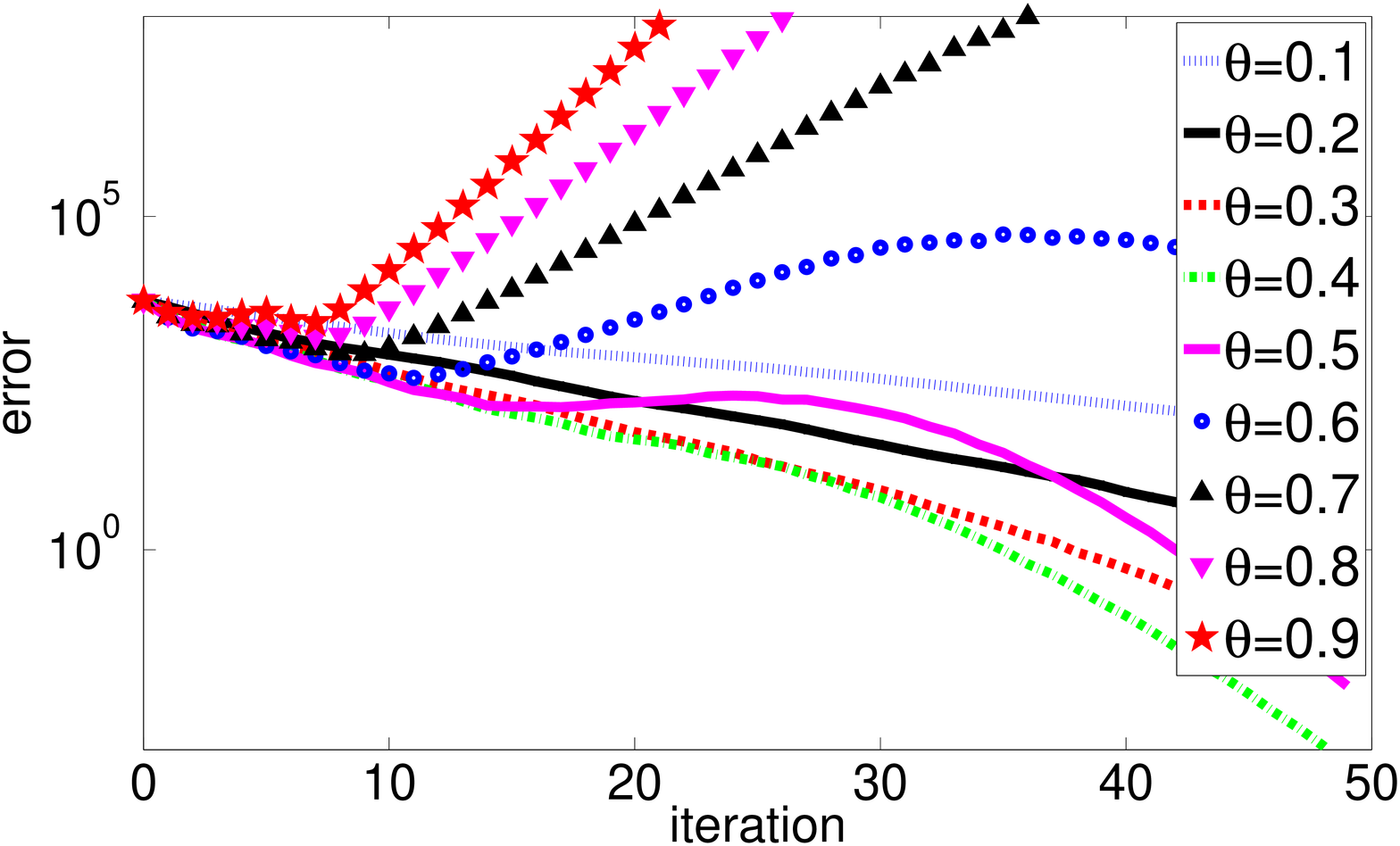}
\caption{Arrangement A1: Convergence of multi-subdomain DNWR for various relaxation parameters
$\theta$, on the left for $T=2$ and on the right for $T=20$}
\label{FigDNmA1}
\end{figure}

\subsection*{Second arrangement (A2):}

This is the well-known red-black block formulation, described in the left panel of Figure
\ref{NumFig2}. In this arrangement, we solve a Dirichlet subproblem and
a Neumann subproblem in alternating fashion. Given initial Dirichlet
traces along the interfaces, a series of Dirichlet subproblems is
first solved in parallel in alternating subdomains ($\Omega_{1},\Omega_{3},\Omega_{5}$),
and then a series of Neumann subproblems is solved in the remaining
subdomains ($\Omega_{2},\Omega_{4}$). So this type of DNWR algorithm is given by: for $k=1,2\ldots$ compute for $i=1,3,5$ 
\[
\begin{array}{rcll}
\partial_{t}u_{i}^{k}-\partial_{xx}u_{i}^{k} & = & 0, & \textrm{in}\;\Omega_{i},t>0,\\
u_{i}^{k}(x,0) & = & u_{0}(x), & \textrm{in}\;\Omega_{i},\\
u_{i}^{k}(x_{i-1},t) & = & g_{i-1}^{k-1}(t), & t>0,\\
u_{i}^{k}(x_{i},t) & = & g_{i}^{k-1}(t), & t>0,
\end{array}
\]
with $g_{0}^{k}(t)=g(0,t),g_{5}^{k}(t)=g(5,t)$, and for $i=2,4$
\[
\begin{array}{rcll}
\partial_{t}u_{i}^{k}-\partial_{xx}u_{i}^{k} & = & 0, & \textrm{in}\;\Omega_{i},t>0,\\
u_{i}^{k}(x,0) & = & u_{0}(x), & \textrm{in}\;\Omega_{i},\\
\partial_{x}u_{i}^{k}(x_{i-1},t) & = & \partial_{x}u_{i-1}^{k}(x_{i-1},t), & t>0,\\
\partial_{x}u_{i}^{k}(x_{i},t) & = & \partial_{x}u_{i+1}^{k}(x_{i},t), & t>0,
\end{array}
\]
together with the updating conditions 
\begin{eqnarray*}
g_{i}^{k}(t) & = & \theta u_{i+1}^{k}(x_{i},t)+(1-\theta)g_{i}^{k-1}(t),\; i=1,3,\\
g_{i}^{k}(t) & = & \theta u_{i}^{k}(x_{i},t)+(1-\theta)g_{i}^{k-1}(t),\; i=2,4,
\end{eqnarray*}
where $\theta\in(0,1]$ is a relaxation parameter.

We now implement this version of DNWR algorithm for different time
windows, picking the same problem and initial guesses as for A1. Figure
\ref{FigDNmA2} gives the convergence curves for different values of the parameter
$\theta$ for $T=2$ on the left, and $T=20$ on the right.
\begin{figure}
\centering
\includegraphics[width=0.49\textwidth]{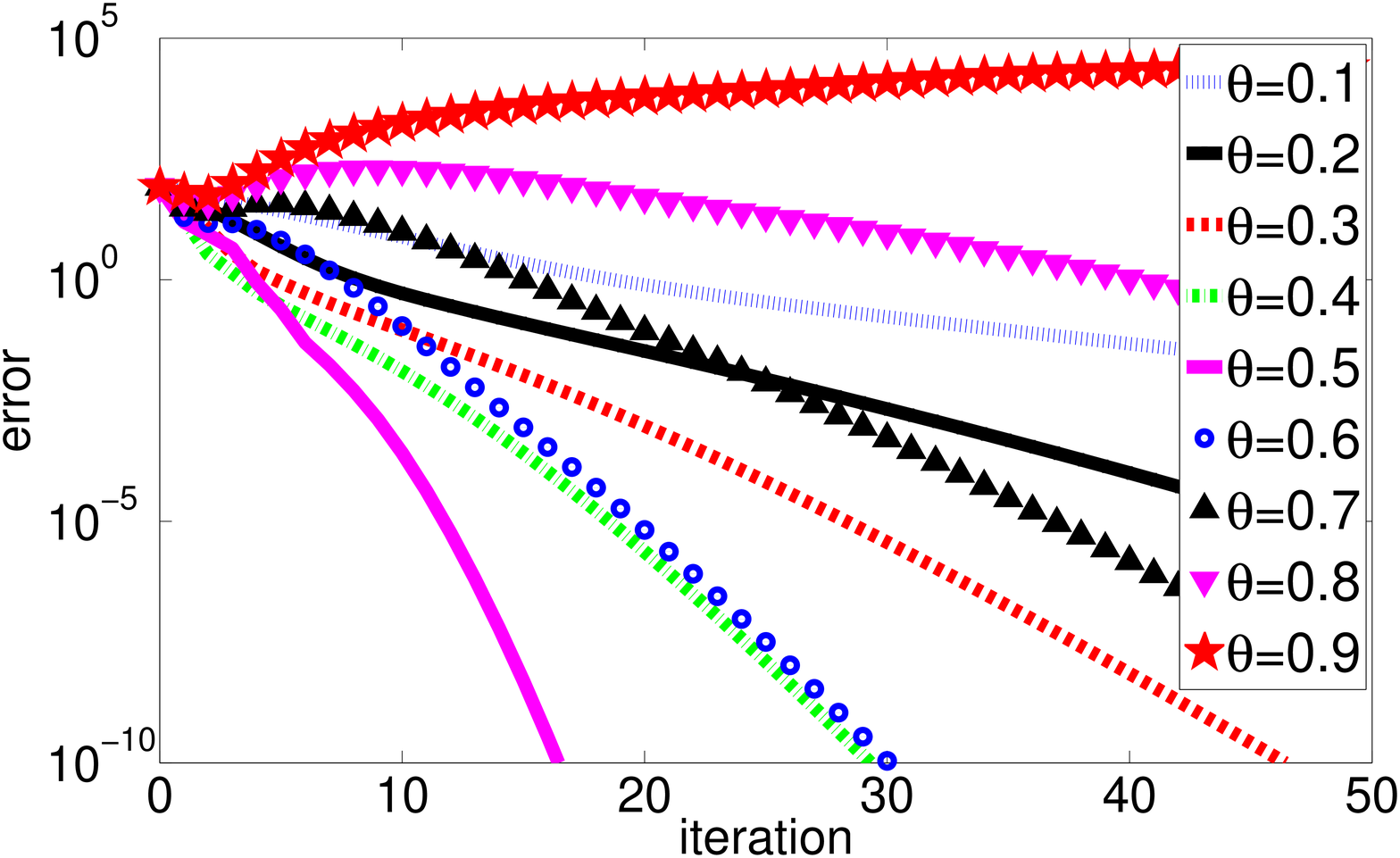}
\includegraphics[width=0.49\textwidth]{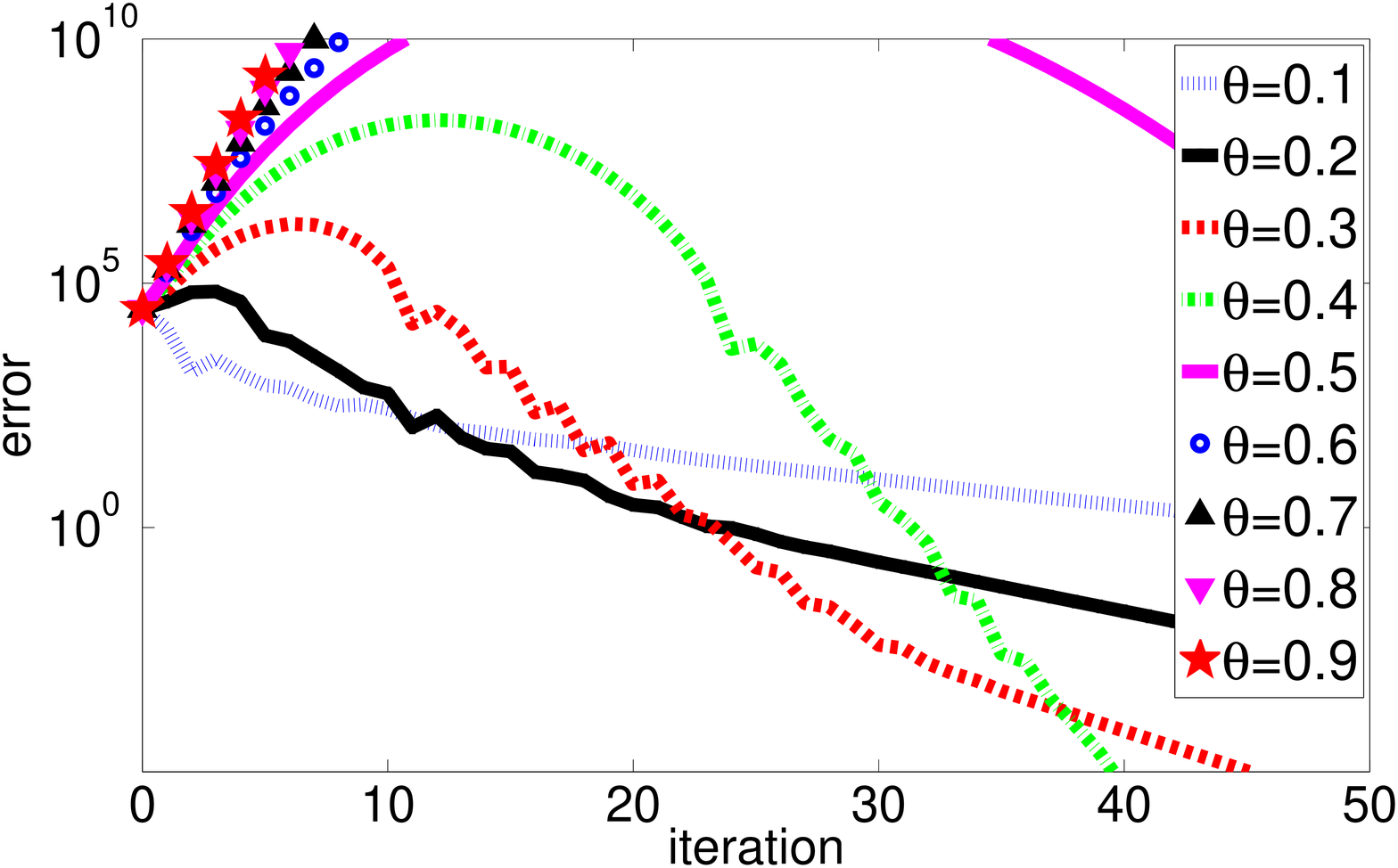}
\caption{Arrangement A2: Convergence of multi-subdomain DNWR for various relaxation parameters
$\theta$, on the left for $T=2$ and on the right for $T=20$}
\label{FigDNmA2}
\end{figure}

\subsection*{Third arrangement (A3):}
We now consider a completely different type of arrangement, proposed
in \cite{FunQuar} and shown in the right panel of Figure \ref{NumFig2}. Given initial guesses along
the interfaces, we begin with a Dirichlet solve in the middle subdomain
$\Omega_{3}$, followed by mixed Neumann-Dirichlet subproblem solves
in the adjacent subdomains, in an order $\Omega_{2},\Omega_{4}$ first
and then in $\Omega_{1},\Omega_{5}$. This third version of the
DNWR algorithms for multiple subdomains
is given by: for $k=1,2\ldots$ and for $\theta\in(0,1]$ compute
\begin{equation}
\begin{array}{rcll}
\partial_{t}u_{3}^{k}-\partial_{xx}u_{3}^{k} & = & 0, & \textrm{in}\;\Omega_{3},t>0,\\
u_{3}^{k}(x,0) & = & u_{0}(x), & \textrm{in}\;\Omega_{3},\\
u_{3}^{k}(x_{2},t) & = & g_{2}^{k-1}(t), & t>0,\\
u_{3}^{k}(x_{3},t) & = & g_{3}^{k-1}(t), & t>0,
\end{array}\label{A3-1}
\end{equation}
 and then for $i=2,1$ 
\begin{equation}
\begin{array}{rcll}
\partial_{t}u_{i}^{k}-\partial_{xx}u_{i}^{k} & = & 0, & \textrm{in}\;\Omega_{i},t>0,\\
u_{i}^{k}(x,0) & = & u_{0}(x), & \textrm{in}\;\Omega_{i},\\
u_{i}^{k}(x_{i-1},t) & = & g_{i-1}^{k-1}(t), & t>0,\\
\partial_{x}u_{i}^{k}(x_{i},t) & = & \partial_{x}u_{i+1}^{k}(x_{i},t), & t>0,
\end{array}\label{A3-2}
\end{equation}
and finally for $i=4,5$ 
\begin{equation}
\begin{array}{rcll}
\partial_{t}u_{i}^{k}-\partial_{xx}u_{i}^{k} & = & 0, & \textrm{in}\;\Omega_{i},t>0,\\
u_{i}^{k}(x,0) & = & u_{0}(x), & \textrm{in}\;\Omega_{i},\\
\partial_{x}u_{i}^{k}(x_{i-1},t) & = & \partial_{x}u_{i-1}^{k}(x_{i-1},t), & t>0,\\
u_{i}^{k}(x_{i},t) & = & g_{i}^{k-1}(t), & t>0,
\end{array}\label{A3-3}
\end{equation}
with $g_{0}^{k}(t)=g(0,t),g_{5}^{k}(t)=g(5,t)$ for the first and
last subdomains at the physical boundaries. The updated interface
values for the next step are defined as
\begin{eqnarray*}
g_{i}^{k}(t) & = & \theta u_{i}^{k}(x_{i},t)+(1-\theta)g_{i}^{k-1}(t),\; i=1,2,\\
g_{i}^{k}(t) & = & \theta u_{i+1}^{k}(x_{i},t)+(1-\theta)g_{i}^{k-1}(t),\; i=3,4.
\end{eqnarray*}
We solve \eqref{MultiDNmodel} using the above DNWR algorithm for
different time windows for the same setting as in A1. Figure \ref{FigDNmA3}
gives the convergence curves for different values of the parameter
$\theta$ for $T=2$ on the left, and $T=20$ on the right. 
\begin{figure}
\centering
\includegraphics[width=0.49\textwidth]{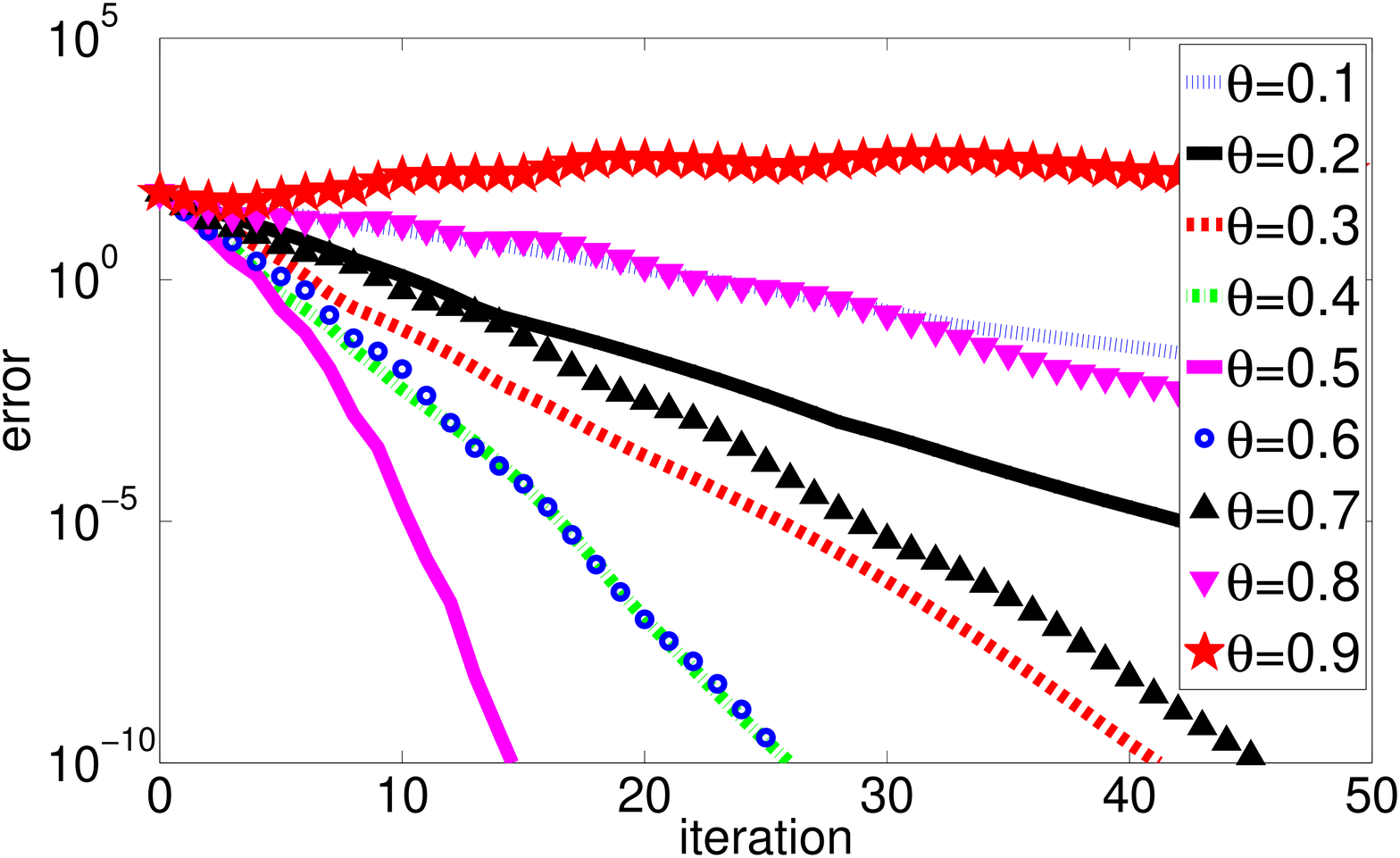}
\includegraphics[width=0.49\textwidth]{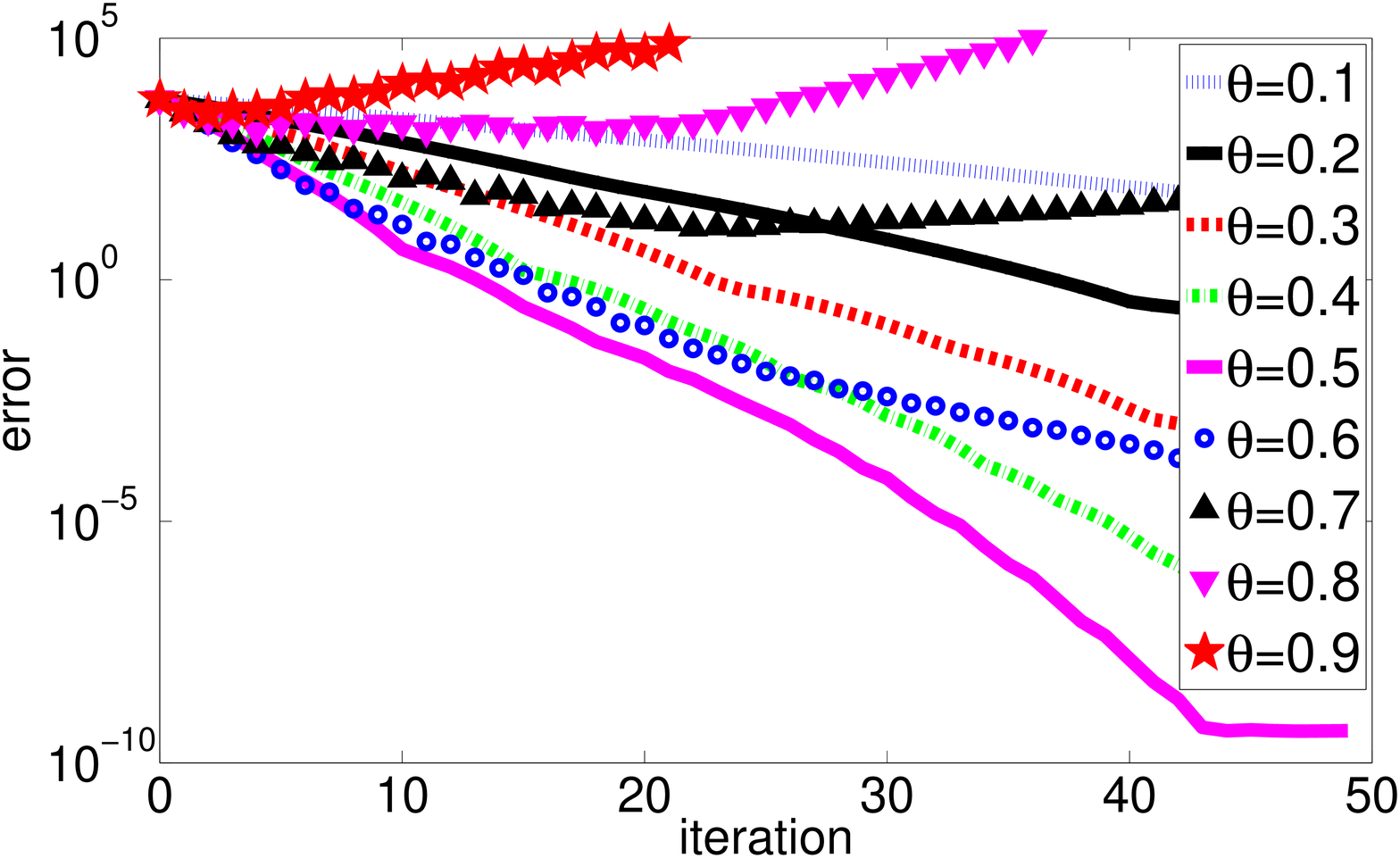}
\caption{Arrangement A3: Convergence of multi-subdomain DNWR for various relaxation parameters $\theta$, on the left for $T=2$ and on the right for $T=20$}
\label{FigDNmA3}
\end{figure}

From the three numerical tests of the DNWR methods (A1, A2 and A3),
it is evident that the behavior of these algorithms are similar for
smaller time windows. But we notice clearly faster convergence for
the arrangement A3 for large time windows. We therefore focus on the
third version (A3) of the DNWR algorithms, and formally define the
DNWR method for the general parabolic model problem \eqref{modelproblem1}
for multiple subdomains in the next subsection. 

\subsection{DNWR algorithm}

We now formally define the Dirichlet-Neumann Waveform Relaxation method
for the model problem \eqref{modelproblem1} on the space-time domain
$\Omega\times(0,T)$ with Dirichlet data given on $\partial\Omega$.
Suppose the spatial domain $\Omega$ is partitioned into $2m+1$ non-overlapping
subdomains $\Omega_{i},i=1,\ldots,2m+1$ without any cross-points, as illustrated 
in Figure
\ref{FigDNmGen}. We denote by $u_{i}$ the restriction of the solution $u$
of \eqref{modelproblem1} to $\Omega_{i}$. For $i=1,\ldots,2m$,
set $\Gamma_{i}:=\partial\Omega_{i}\cap\partial\Omega_{i+1}$. We
further define $\Gamma_{0}=\Gamma_{2m+1}=\emptyset$. We denote by
$\boldsymbol{n}_{i,j}$ the unit outward normal for $\Omega_{i}$
on the interface $\Gamma_{j},j=i-1,i$ (for $\Omega_{1},\Omega_{2m+1}$
we have only $\boldsymbol{n}_{1,2}$ and $\boldsymbol{n}_{2m+1,2m}$
respectively). In Figure \ref{FigDNmGen}, $D$ and $N$ in red denote the Dirichlet
and Neumann boundary conditions respectively along the interfaces
as in the arrangement A3. 

The DNWR algorithm starts with initial Dirichlet traces $g_{i}^{0}(\boldsymbol{x},t)$
along the interfaces $\Gamma_{i}\times(0,T)$, $i=1,\ldots,2m$, and
then performs the following computation for
$k=1,2,\ldots$
\begin{equation}
\begin{array}{rcll}
\partial_{t}u_{m+1}^{k}-\nabla\cdot\left(\kappa(\boldsymbol{x},t)\nabla u_{m+1}^{k}\right) & = & f, & \textrm{in}\;\Omega_{m+1},\\
u_{m+1}^{k}(\boldsymbol{x},0) & = & u_{0}(\boldsymbol{x}), & \textrm{in}\;\Omega_{m+1},\\
u_{m+1}^{k} & = & g, & \textrm{on}\;\partial\Omega\cap\partial\Omega_{m+1},\\
u_{m+1}^{k} & = & g_{i}^{k-1}, & \textrm{on}\;\Gamma_{i},i=m,m+1,
\end{array}\label{DNm3_1}
\end{equation}
and then for $m\geq i\geq1$ and $m+2\leq j\leq2m+1$
\begin{equation}
\arraycolsep0.001em\begin{array}{rcll}
\partial_{t}u_{i}^{k} & = & \nabla\cdot\left(\kappa(\boldsymbol{x},t)\nabla u_{i}^{k}\right)+f, & \,\textrm{in}\;\Omega_{i},\\
u_{i}^{k}(\boldsymbol{x},0) & = & u_{0}(\boldsymbol{x}), & \textrm{in}\;\Omega_{i},\\
u_{i}^{k} & = & \widetilde{g}_{i}^{k-1}, & \textrm{on}\;\partial\Omega_{i}\setminus\Gamma_{i},\\
\partial_{\boldsymbol{n}_{i,i+1}}u_{i}^{k} & = & -\partial_{\boldsymbol{n}_{i+1,i}}u_{i+1}^{k}, & \textrm{on}\;\Gamma_{i},
\end{array}\;\begin{array}{rcll}
\partial_{t}u_{j}^{k} & = & \nabla\cdot\left(\kappa(\boldsymbol{x},t)\nabla u_{j}^{k}\right)+f, & \,\textrm{in}\;\Omega_{j},\\
u_{j}^{k}(\boldsymbol{x},0) & = & u_{0}(\boldsymbol{x}), & \textrm{in}\;\Omega_{j},\\
\partial_{\boldsymbol{n}_{j,j-1}}u_{j}^{k} & = & -\partial_{\boldsymbol{n}_{j-1,j}}u_{j-1}^{k}, & \textrm{on}\;\Gamma_{j-1},\\
u_{j}^{k} & = & \check{g}_{j}^{k-1}, & \textrm{on}\;\partial\Omega_{j}\setminus\Gamma_{j-1},
\end{array}\label{DNm3_2}
\end{equation}
with the update conditions along the interfaces 
\begin{equation}
\arraycolsep0.1em\begin{array}{rcl}
g_{i}^{k}(\boldsymbol{x},t) & = & \theta u_{i}^{k}\left|_{\Gamma_{i}\times(0,T)}\right.+(1-\theta)g_{i}^{k-1}(\boldsymbol{x},t),\;1\leq i\leq m,\\
g_{j}^{k}(\boldsymbol{x},t) & = & \theta u_{j+1}^{k}\left|_{\Gamma_{j}\times(0,T)}\right.+(1-\theta)g_{j}^{k-1}(\boldsymbol{x},t),\; m+1\leq j\leq2m,
\end{array} \label{DNm3_3}
\end{equation}
where $\theta\in(0,1]$, and for $i=1,\ldots,m$ and $j=m+2,\ldots,2m+1$ 
\[
\widetilde{g}_{i}^{k-1}=\begin{cases}
g, & \textrm{on}\;\partial\Omega\cap\partial\Omega_{i},\\
g_{i-1}^{k-1}, & \textrm{on}\;\Gamma_{i-1},
\end{cases}, \quad \check{g}_{j}^{k-1}=\begin{cases}
g, & \textrm{on}\;\partial\Omega\cap\partial\Omega_{j},\\
g_{j}^{k-1}, & \textrm{on}\;\Gamma_{j},
\end{cases}.
\]  
\begin{figure}
\centering{}\includegraphics[width=8cm,height=5.5cm]{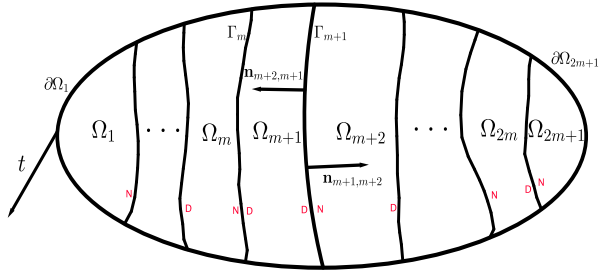}
\caption{Splitting into many non-overlapping strip-like subdomains}
\label{FigDNmGen}
\end{figure}
\begin{remark}\label{Rem1}
The DNWR algorithm \eqref{DNm3_1}-\eqref{DNm3_2}-\eqref{DNm3_3}
is defined for an odd number of subdomains. In case of an even number
of subdomains $2m+2$, we treat in a similar way as above the first
$2m+1$ subdomains, keeping the last one aside. Then for the last
subdomain, we apply a Neumann transmission condition along the interface
$\Gamma_{2m+1}$ and a Dirichlet boundary condition along the physical
boundary.
\end{remark}

\subsection{Convergence analysis}

We present the convergence result of the DNWR algorithm \eqref{DNm3_1}-\eqref{DNm3_2}-\eqref{DNm3_3} for the 1D heat equation with $\kappa(x,t)=\nu$. We split
the domain $\Omega:=(0,L)$ into non-overlapping subdomains $\Omega_{i}:=(x_{i-1},x_{i})$,
$i=1,\ldots,2m+1$, and define the subdomain length $h_{i}:=x_{i}-x_{i-1}$.
Also, define the physical boundary conditions as $u(0,t)=g_{0}(t)$
and $u(L,t)=g_{L}(t)$, which in turn become zeros as we consider
the error equations, $f(x,t)=0,g_{0}(t)=g_{L}(t)=0=u_{0}(x)$.
We take $\left\{ g_{i}^{0}(t)\right\} _{i=1}^{2m}$ as initial guesses
along the interfaces $\left\{ x=x_{i}\right\} \times(0,T)$, and for
sake of consistency we denote $g_{0}^{k}(t)=g_{2m+1}^{k}=0$ for all
$k$ corresponding to the physical boundaries. Denoting
by $\left\{ w_{i}^{k}(t)\right\} _{i=1}^{2m}$ for $k=1,2\ldots$
the Neumann traces along the interfaces, we compute
\begin{equation}
\begin{array}{rcll}
\partial_{t}u_{m+1}^{k}-\nu\partial_{xx}u_{m+1}^{k} & = & 0, & x\in\Omega_{m+1},\\
u_{m+1}^{k}(x,0) & = & 0, & x\in\Omega_{m+1},\\
u_{m+1}^{k}(x_{m},t) & = & g_{m}^{k-1}(t),\\
u_{m+1}^{k}(x_{m+1},t) & = & g_{m+1}^{k-1}(t).
\end{array}\label{DNmultieq1}
\end{equation}
and then for $m\geq i\geq1$ and $m+2\leq j\leq2m+1$
\begin{equation}
\arraycolsep0.08em\begin{array}{rcll}
\partial_{t}u_{i}^{k}-\nu\partial_{xx}u_{i}^{k} & = & 0, & x\in\Omega_{i},\\
u_{i}^{k}(x,0) & = & 0, & x\in\Omega_{i},\\
u_{i}^{k}(x_{i-1},t) & = & g_{i-1}^{k-1}(t),\\
\partial_{x}u_{i}^{k}(x_{i},t) & = & w_{i}^{k}(t),
\end{array}\;\begin{array}{rcll}
\partial_{t}u_{j}^{k}-\nu\partial_{xx}u_{j}^{k} & = & 0, & x\in\Omega_{j},\\
u_{j}^{k}(x,0) & = & 0, & x\in\Omega_{j},\\
-\partial_{x}u_{j}^{k}(x_{j-1},t) & = & w_{j-1}^{k}(t),\\
u_{j}^{k}(x_{j},t) & = & g_{j}^{k-1}(t),
\end{array}\label{DNmultieq2}
\end{equation}
and finally the update conditions with the parameter $\theta\in(0,1]$
\begin{equation}
\arraycolsep0.1em\begin{array}{rclrcl}
g_{i}^{k}(t) & = & \theta u_{i}^{k}(x_{i},t)+(1-\theta)g_{i}^{k-1}(t), & w_{i}^{k}(t) & = & \partial_{x}u_{i+1}^{k}(x_{i},t),\;1\leq i\leq m,\\
g_{j}^{k}(t) & = & \theta u_{j+1}^{k}(x_{j},t)+(1-\theta)g_{j}^{k-1}(t), & w_{j}^{k}(t) & = & -\partial_{x}u_{j}^{k}(x_{j},t),\; m+1\leq j\leq2m.
\end{array}\label{DNmultieq3}
\end{equation}
We have the following main convergence result for DNWR for the heat equation and the proof will be given in Section \ref{Section5}.

\begin{theorem}[Convergence of DNWR for multiple subdomains]\label{TheorDNmheat}
For $\theta=1/2$ and $T>0$ fixed, the DNWR algorithm \eqref{DNmultieq1}-\eqref{DNmultieq2}-\eqref{DNmultieq3}
for multiple subdomains of unequal sizes $h_{i},1\leq i\leq2m+1$ with $h_{\max}:=\max_{1\leq i\leq2m+1}h_{i}$
and $h_{\min}:=\min_{1\leq i\leq2m+1}h_{i}$ converges superlinearly
with the estimate
\[
{\displaystyle \max_{1\leq i\leq2m}}\parallel g_{i}^{k}\parallel_{L^{\infty}(0,T)}\leq\left(2m-3+\frac{2h_{\max}}{h_{m+1}}\right)^{k}{\rm erfc}\left(\frac{kh_{\min}}{2\sqrt{\nu T}}\right){\displaystyle \max_{1\leq i\leq2m}}\parallel g_{i}^{0}\parallel_{L^{\infty}(0,T)}.
\]
\end{theorem}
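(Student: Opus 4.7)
The plan is to work in the Laplace domain in time. Applying the Laplace transform to \eqref{DNmultieq1}--\eqref{DNmultieq3} reduces each subdomain problem to a second-order ODE $\nu \hat u_i'' = s \hat u_i$ in $x$, whose solutions are built from $\cosh(\sigma(x-x_{i-1}))$ and $\sinh(\sigma(x-x_{i-1}))$ with $\sigma := \sqrt{s/\nu}$. First I would solve the middle subdomain $\Omega_{m+1}$ (Dirichlet--Dirichlet) explicitly to extract the Neumann traces $\hat w_m^k, \hat w_{m+1}^k$ as linear combinations of $\hat g_m^{k-1}, \hat g_{m+1}^{k-1}$. On each side subdomain $\Omega_i$ with Dirichlet datum $\hat g_{i-1}^{k-1}$ and Neumann datum $\hat w_i^k$, a direct computation gives
\[
\hat u_i^k(x_i,s) \;=\; \frac{\hat g_{i-1}^{k-1}(s)}{\cosh(\sigma h_i)} + \frac{\tanh(\sigma h_i)}{\sigma}\, \hat w_i^k(s),
\]
with a symmetric formula on the right side. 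Propagating $\hat w$ outward through the sequential chain of solves then yields a closed-form matrix recursion $\hat{\mathbf g}^k(s) = M(s)\hat{\mathbf g}^{k-1}(s)$ for the vector of interface errors $\hat{\mathbf g}^k = (\hat g_1^k,\ldots,\hat g_{2m}^k)^\top$.

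The algebraic heart of the argument is that, with $\theta=1/2$, the diagonal piece of $M(s)$ collapses: for instance, the self-coefficient of $\hat g_m^{k-1}$ in $\hat g_m^k$ becomes
\[
\tfrac12\bigl[1-\tanh(\sigma h_m)\coth(\sigma h_{m+1})\bigr] \;=\; \frac{\sinh(\sigma(h_{m+1}-h_m))}{2\cosh(\sigma h_m)\sinh(\sigma h_{m+1})},
\]
which vanishes in the equal-size case and, in general, decays exponentially as $|\sigma|\to\infty$. The off-diagonal entries are similarly products of $\tanh$ and $\sech$ factors; using the expansion $\sech z=2\sum_{n\geq 0}(-1)^n e^{-(2n+1)z}$, every entry of $M(s)^k$ is dominated by a signed sum of pure exponentials $e^{-d\sigma}$, where $d$ denotes the total ``transit distance'' through the subdomains along a propagation path of length $k$.

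Inverting the Laplace transform converts each $e^{-d\sigma}/s$ into the kernel ${\rm erfc}(d/(2\sqrt{\nu t}))$, whose sup-in-time value on $(0,T)$ is ${\rm erfc}(d/(2\sqrt{\nu T}))$ by monotonicity of ${\rm erfc}$. Young's convolution inequality $\|K\star g\|_{L^\infty(0,T)}\leq \|K\|_{L^1(0,T)}\|g\|_{L^\infty(0,T)}$ then transfers the Laplace-domain bound to the time domain. Since every path contributing to $M(s)^k$ traverses at least $k$ interfaces, each of width at least $h_{\min}$, the dominating erfc is ${\rm erfc}(kh_{\min}/(2\sqrt{\nu T}))$; the prefactor $(2m-3+2h_{\max}/h_{m+1})^k$ then emerges from counting and uniformly bounding the number of contributing path-products per iteration, with up to $2m-2$ contributions from side-subdomain traversals together with two contributions of size $h_{\max}/h_{m+1}$ arising from each transit through the middle subdomain. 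Taking the maximum over $i$ of the resulting estimates gives the claimed bound in terms of $\max_i\|g_i^0\|_{L^\infty(0,T)}$.

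The main obstacle is precisely this combinatorial bookkeeping for $M(s)^k$. Unlike the two-subdomain analysis of \cite{GKM1,Mandal}, $M(s)$ is neither diagonal nor tridiagonal — the sequential outward sweep couples each $\hat g_i^k$ to all $\hat g_j^{k-1}$ with $j$ between $i-1$ and $m+1$ on the left and symmetrically on the right — so the exponentials $e^{-d\sigma}$ arising from distinct paths have different decay rates and different signs. Keeping the $L^1$-norm of each signed kernel uniformly dominated by the single worst ${\rm erfc}$, while still producing \emph{exactly} the polynomial prefactor $(2m-3+2h_{\max}/h_{m+1})^k$, is the technically delicate step, essentially because the $h_{m+1}$ in the denominator records the single ``bottleneck'' factor $\sinh(\sigma h_{m+1})^{-1}$ that every path originating at the middle interface must pay.
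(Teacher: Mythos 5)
Your setup --- Laplace transform in time, explicit subdomain solves in terms of $\cosh$ and $\sinh$, assembly into a matrix recursion $\hat{\mathbf g}^k = M(s)\hat{\mathbf g}^{k-1}$, and the observation that $\theta=1/2$ collapses the diagonal into terms like $\sinh(\sigma(h_{m+1}-h_m))/(2\cosh(\sigma h_m)\sinh(\sigma h_{m+1}))$ --- coincides with the paper's, and your identification of where the $h_{m+1}$ bottleneck and the $\mathrm{erfc}$ come from is qualitatively right. But the step you yourself flag as ``the main obstacle'' is a genuine gap, and the route you propose for it (expanding every entry of $M(s)^k$ into signed sums of pure exponentials $e^{-d\sigma}$ and doing combinatorial path counting) is not the one that works. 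The difficulty is that the $L^1(0,T)$ norm of the inverse transform of a \emph{signed} sum of exponential kernels is not controlled by its worst single term, and without a positivity structure you cannot evaluate these $L^1$ norms exactly; ``keeping the $L^1$-norm of each signed kernel uniformly dominated by the single worst $\mathrm{erfc}$'' is precisely what is left unproved.

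The paper resolves this by decoupling the two effects you are trying to extract simultaneously from $M(s)^k$. It substitutes $\hat v_i^k := 2^k\gamma^k\hat g_i^k$ with $\gamma=\cosh(h_{\min}\sqrt{s/\nu})$, so that the recursion for the $\hat v_i^k$ has coefficients $\hat d_{i,j}$ that can each be split into a sum of pieces of the form $\cosh(\alpha\sqrt{s/\nu})/\cosh(\beta\sqrt{s/\nu})$ or $\sinh(\alpha\sqrt{s/\nu})/(\cosh(\cdot)\sinh(\beta\sqrt{s/\nu}))$ with the right sign: each piece has a \emph{nonnegative} inverse Laplace transform (Lemma \ref{PositivityLemma}), so its $L^1(0,T)$ norm equals its limit as $s\to 0+$ (Lemmas \ref{LimitLemma} and \ref{KernelPositive}). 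Summing these exact limits row by row gives the per-iteration factor $1+2(m-2)+2h_{\max}/h_{m+1}=2m-3+2h_{\max}/h_{m+1}$, and a plain induction then yields $\|v_i^k\|_{L^\infty}\le(2m-3+2h_{\max}/h_{m+1})^k\max_j\|g_j^0\|_{L^\infty}$ --- no analysis of $M(s)^k$ is ever needed. The superlinear $\mathrm{erfc}$ factor enters only at the very end, from undoing the substitution: $g_i^k = 2^{-k}(f_k * v_i^k)$ with $f_k=\mathcal L^{-1}(\gamma^{-k})=\mathcal L^{-1}(\sech^k(h_{\min}\sqrt{s/\nu}))$, and one shows $0\le f_k(t)\le\mathcal L^{-1}(2^ke^{-kh_{\min}\sqrt{s/\nu}})(t)$ termwise, whence $\int_0^Tf_k\le 2^k\,\mathrm{erfc}(kh_{\min}/(2\sqrt{\nu T}))$ by the standard identity $\mathcal L^{-1}(e^{-\lambda\sqrt s}/s)=\mathrm{erfc}(\lambda/(2\sqrt t))$. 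If you want to complete your argument, you should abandon the path-counting over $M(s)^k$ and instead carry out this per-iteration positivity decomposition of the single matrix $M(s)$ (equivalently $P$ in the paper), which is where all the real work lies.
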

\begin{remark}\label{Rem2}
The estimate in Theorem \ref{TheorDNmheat} holds for an odd number
of subdomains. In case of an even number of subdomains $2m+2$,
we define the algorithm as given in Remark \ref{Rem1}, and the estimate will
be of the form
\begin{equation}\label{DNmevenest}
{\displaystyle \max_{1\leq i\leq2m+1}}\parallel g_{i}^{k}\parallel_{L^{\infty}(0,T)}\leq\left(2m-1+\frac{2h_{\max}}{h_{m+1}}\right)^{k}{\rm erfc}\left(\frac{kh_{\min}}{2\sqrt{\nu T}}\right){\displaystyle \max_{1\leq i\leq2m+1}}\parallel g_{i}^{0}\parallel_{L^{\infty}(0,T)}.
\end{equation}
$2m-1$ appears in \eqref{DNmevenest} as we calculate the estimate
for $2m+3$ subdomains. 
\end{remark}

\section{DNWR for hyperbolic problems}\label{Section3}

In this section we define the Dirichlet-Neumann Waveform Relaxation
method with many subdomains for the model problem \eqref{modelwave} on the space-time
domain $\Omega\times(0,T)$.
This can be treated as a generalization of the DNWR algorithm for
two subdomains, for which see \cite{GKM3}. As before, there are three
possible arrangements: A1, A2 and A3. However, for consistency we formally define below the DNWR algorithm for the arrangement A3. 

\subsection{DNWR algorithm}

Suppose the spatial domain $\Omega$ is partitioned into $2m+1$ non-overlapping
subdomains $\Omega_{i},i=1,\ldots,2m+1$, without any cross-points
as illustrated in Figure \ref{FigDNmGen}. For $i=1,\ldots,2m$,
set $\Gamma_{i}:=\partial\Omega_{i}\cap\partial\Omega_{i+1}$. 
For consistency, we
set $\Gamma_{0}=\Gamma_{2m+1}=\emptyset$. We denote by $u_{i}$ the restriction
of the solution $u$ of \eqref{modelwave} to $\Omega_{i}$, and by
$\boldsymbol{n}_{i,j}$ the unit outward normal for $\Omega_{i}$
on the interface $\Gamma_{j},j=i-1,i$ (for $\Omega_{1},\Omega_{2m+1}$
we have only $\boldsymbol{n}_{1,2}$ and $\boldsymbol{n}_{2m+1,2m}$
respectively). We define the DNWR method as in the arrangement A3,
but one can also consider as in A1 and A2.

Given initial Dirichlet traces $g_{i}^{0}(\boldsymbol{x},t)$ along
the interfaces $\Gamma_{i}\times(0,T)$, $i=1,\ldots,2m$, the DNWR
algorithm consists of the following computation for $k=1,2,\ldots$
\begin{equation}
\begin{array}{rcll}
\partial_{tt}u_{m+1}^{k}-c^{2}(\boldsymbol{x})\Delta u_{m+1}^{k} & = & f, & \textrm{in}\;\Omega_{m+1},\\
u_{m+1}^{k}(\boldsymbol{x},0) & = & v_{0}(\boldsymbol{x}), & \textrm{in}\;\Omega_{m+1},\\
\partial_{t}u_{m+1}^{k}(\boldsymbol{x},0) & = & w_{0}(\boldsymbol{x}), & \textrm{in}\;\Omega_{m+1},\\
u_{m+1}^{k} & = & g, & \textrm{on}\;\partial\Omega\cap\partial\Omega_{m+1},\\
u_{m+1}^{k} & = & g_{i}^{k-1}, & \textrm{on}\;\Gamma_{i},i=m,m+1,
\end{array}\label{DNmwave1}
\end{equation}
and then for $m\geq i\geq1$ and $m+2\leq j\leq2m+1$
\begin{equation}
\arraycolsep0.08em\begin{array}{rcll}
\partial_{tt}u_{i}^{k} & = & c^{2}(\boldsymbol{x})\Delta u_{i}^{k} + f, & \,\textrm{in}\;\Omega_{i},\\
u_{i}^{k}(\boldsymbol{x},0) & = & v_{0}(\boldsymbol{x}), & \textrm{in}\;\Omega_{i},\\
\partial_{t}u_{i}^{k}(\boldsymbol{x},0) & = & w_{0}(\boldsymbol{x}), & \textrm{in}\;\Omega_{i},\\
u_{i}^{k} & = & \widetilde{g}_{i}^{k-1}, & \textrm{on}\;\partial\Omega_{i}\setminus\Gamma_{i},\\
\partial_{\boldsymbol{n}_{i,i+1}}u_{i}^{k} & = & -\partial_{\boldsymbol{n}_{i+1,i}}u_{i+1}^{k}, & \textrm{on}\;\Gamma_{i},
\end{array}\;\begin{array}{rcll}
\partial_{tt}u_{j}^{k} & = & c^{2}(\boldsymbol{x})\Delta u_{j}^{k} + f, & \,\textrm{in}\;\Omega_{j},\\
u_{j}^{k}(\boldsymbol{x},0) & = & v_{0}(\boldsymbol{x}), & \textrm{in}\;\Omega_{j},\\
\partial_{t}u_{j}^{k}(\boldsymbol{x},0) & = & w_{0}(\boldsymbol{x}), & \textrm{in}\;\Omega_{j},\\
\partial_{\boldsymbol{n}_{j,j-1}}u_{j}^{k} & = & -\partial_{\boldsymbol{n}_{j-1,j}}u_{j-1}^{k}, & \textrm{on}\;\Gamma_{j-1},\\
u_{j}^{k} & = & \check{g}_{j}^{k-1}, & \textrm{on}\;\partial\Omega_{j}\setminus\Gamma_{j-1},
\end{array}\label{DNmwave2}
\end{equation}
with the update conditions along the interfaces
\begin{equation}
\arraycolsep0.1em\begin{array}{rcl}
g_{i}^{k}(\boldsymbol{x},t) & = & \theta u_{i}^{k}\left|_{\Gamma_{i}\times(0,T)}\right.+(1-\theta)g_{i}^{k-1}(\boldsymbol{x},t),\;1\leq i\leq m,\\
g_{j}^{k}(\boldsymbol{x},t) & = & \theta u_{j+1}^{k}\left|_{\Gamma_{j}\times(0,T)}\right.+(1-\theta)g_{j}^{k-1}(\boldsymbol{x},t),\;m+1\leq j\leq2m,
\end{array}\label{DNmwave3}
\end{equation}
where $\theta\in(0,1]$, and for $i=1,\ldots,m$ and $j=m+2,\ldots,2m+1$ 
\[
\widetilde{g}_{i}^{k-1}=\begin{cases}
g, & \textrm{on}\;\partial\Omega\cap\partial\Omega_{i},\\
g_{i-1}^{k-1}, & \textrm{on}\;\Gamma_{i-1},
\end{cases}, \quad \check{g}_{j}^{k-1}=\begin{cases}
g, & \textrm{on}\;\partial\Omega\cap\partial\Omega_{j},\\
g_{j}^{k-1}, & \textrm{on}\;\Gamma_{j},
\end{cases}.
\] 

\subsection{Convergence analysis}

We analyze the DNWR algorithm \eqref{DNmwave1}-\eqref{DNmwave2}-\eqref{DNmwave3} for the 1D wave equation with a constant wave speed, $c(x)=c$. Consider a splitting
of the domain $\Omega:=(0,L)$ into non-overlapping subdomains $\Omega_{i}:=(x_{i-1},x_{i})$,
$i=1,\ldots,2m+1$, and define the subdomain length $g_{i}:=x_{i}-x_{i-1}$.
As we consider the error equations, the physical boundary conditions $u(0,t)=g_{0}(t)$
and $u(L,t)=g_{L}(t)$ become zeros along with $f(x,t)=0,v_{0}(x)=0=w_{0}(x)$.
We take $\left\{ g_{i}^{0}(t)\right\} _{i=1}^{2m}$ as initial guesses
along the interfaces $\left\{ x=x_{i}\right\} \times(0,T)$, and for
sake of consistency we denote $g_{0}^{k}(t)=g_{2m+1}^{k}=0$ for all
$k$ corresponding to the physical boundaries. Denoting by $\left\{ w_{i}^{k}(t)\right\} _{i=1}^{2m}$
for $k=1,2\ldots$ the Neumann traces along the interfaces, we compute
\begin{equation}
\begin{array}{rcll}
\partial_{tt}u_{m+1}^{k}-c^{2}\partial_{xx}u_{m+1}^{k} & = & 0, & x\in\Omega_{m+1},\\
u_{m+1}^{k}(x,0) & = & 0, & x\in\Omega_{m+1},\\
\partial_{t}u_{m+1}^{k}(x,0) & = & 0, & x\in\Omega_{m+1},\\
u_{m+1}^{k}(x_{m},t) & = & g_{m}^{k-1}(t),\\
u_{m+1}^{k}(x_{m+1},t) & = & g_{m+1}^{k-1}(t).
\end{array}\label{DNmwaveA3-1}
\end{equation}
and then for $m\geq i\geq1$ and $m+2\leq j\leq2m+1$
\begin{equation}
\arraycolsep0.08em\begin{array}{rcll}
\partial_{tt}u_{i}^{k}-c^{2}\partial_{xx}u_{i}^{k} & = & 0, & x\in\Omega_{i},\\
u_{i}^{k}(x,0) & = & 0, & x\in\Omega_{i},\\
\partial_{t}u_{i}^{k}(x,0) & = & 0, & x\in\Omega_{i},\\
u_{i}^{k}(x_{i-1},t) & = & g_{i-1}^{k-1}(t),\\
\partial_{x}u_{i}^{k}(x_{i},t) & = & w_{i}^{k}(t),
\end{array}\;\begin{array}{rcll}
\partial_{tt}u_{j}^{k}-c^{2}\partial_{xx}u_{j}^{k} & = & 0, & x\in\Omega_{j},\\
u_{j}^{k}(x,0) & = & 0, & x\in\Omega_{j},\\
\partial_{t}u_{j}^{k}(x,0) & = & 0, & x\in\Omega_{j},\\
-\partial_{x}u_{j}^{k}(x_{j-1},t) & = & w_{j-1}^{k}(t),\\
u_{j}^{k}(x_{j},t) & = & g_{j}^{k-1}(t),
\end{array}\label{DNmwaveA3-2}
\end{equation}
and finally the update conditions with the parameter $\theta\in(0,1]$
\begin{equation}
\arraycolsep0.1em\begin{array}{rclrcl}
g_{i}^{k}(t) & = & \theta u_{i}^{k}(x_{i},t)+(1-\theta)g_{i}^{k-1}(t), & w_{i}^{k}(t) & = & \partial_{x}u_{i+1}^{k}(x_{i},t),\;1\leq i\leq m,\\
g_{j}^{k}(t) & = & \theta u_{j+1}^{k}(x_{j},t)+(1-\theta)g_{j}^{k-1}(t), & w_{j}^{k}(t) & = & -\partial_{x}u_{j}^{k}(x_{j},t),\;m+1\leq j\leq2m.
\end{array}\label{DNmwaveA3-3}
\end{equation}
We now state the main convergence result for DNWR for the wave equation. The proof of Theorem \ref{DNwavemulti} will also be given in Section \ref{Section5}.

\begin{theorem}[Convergence of DNWR for multiple subdomains]\label{DNwavemulti} 
Let $\theta=1/2$. Then the DNWR algorithm \eqref{DNmwaveA3-1}-\eqref{DNmwaveA3-2}-\eqref{DNmwaveA3-3}
converges in at most $k+1$ iterations for multiple subdomains, if
the time window length $T$ satisfies $T/k\leq h_{\min}/c$, where
$c$ is the wave speed. 
\end{theorem}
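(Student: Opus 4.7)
The plan is to reduce the multi-subdomain iteration to a linear recursion on the Laplace-transformed interface traces, and then read off the time-domain delay per iteration from the exponential factors $e^{-sh_i/c}$ that naturally appear in closed-form solutions of the 1D wave equation. The overall strategy is to show that one iteration shifts every component of the error by at least $h_{\min}/c$ in time, so that after $k$ iterations the error is supported in $[kh_{\min}/c,\infty)$, which is disjoint from $(0,T)$ under the hypothesis $T/k\le h_{\min}/c$.

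\textbf{Step 1 (Laplace reduction).} I take the Laplace transform in $t$ of the error equations \eqref{DNmwaveA3-1}--\eqref{DNmwaveA3-2}; zero initial data makes this clean. In each subdomain the ODE $s^2\hat u_i=c^2\partial_{xx}\hat u_i$ has the closed-form solution
\[
\hat u_i^k(x,s)=A_i(s)\cosh\!\left(\tfrac{s(x-x_{i-1})}{c}\right)+B_i(s)\sinh\!\left(\tfrac{s(x-x_{i-1})}{c}\right).
\]
Imposing the two transmission conditions at the endpoints determines $A_i,B_i$; expressing the outward Dirichlet trace and the outward Neumann flux in this basis yields rational expressions in $\cosh(sh_i/c)$ and $\sinh(sh_i/c)$, driven by the incoming Dirichlet trace $\hat g_{i-1}^{k-1}$ (or $\hat g_j^{k-1}$) and by the incoming Neumann flux from the adjacent centre-side subdomain.

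\textbf{Step 2 (Iteration matrix with $\theta=1/2$).} Sweeping outward as prescribed by A3 and inserting the updates \eqref{DNmwaveA3-3} yields a linear recursion $\hat{\mathbf g}^k(s)=M(s)\,\hat{\mathbf g}^{k-1}(s)$ on the interface-trace vector. Paralleling the two-subdomain analysis of \cite{GKM3}, the choice $\theta=1/2$ should cancel the zero-delay contribution $(1-\theta)\hat g_i^{k-1}$ in each update against the zero-delay part of $\theta\,\hat u_i^k(x_i,s)$, leaving every entry of $M(s)$ as a polynomial in $\{1/\cosh(sh_i/c),\ \tanh(sh_i/c)\}$ with no constant term. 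Using
\[
\frac{1}{\cosh\alpha}=2\sum_{n\ge0}(-1)^n e^{-(2n+1)\alpha},\qquad 1-\tanh\alpha=2\sum_{n\ge0}(-1)^n e^{-2(n+1)\alpha},
\]
every entry of $M(s)$ is then an absolutely convergent series of pure exponentials $e^{-s\tau}$ with delays $\tau\ge h_{\min}/c$.

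\textbf{Step 3 (Finite-step conclusion).} Since $\mathcal L^{-1}\{e^{-s\tau}\hat f(s)\}(t)=f(t-\tau)\,\chi_{\{t\ge\tau\}}$, applying $M(s)$ in the time domain acts as a causal linear map that annihilates any function supported on $[0,h_{\min}/c)$. Iterating $k$ times, every component of $\mathbf g^k(t)$ is supported in $[kh_{\min}/c,\infty)$; the hypothesis $T/k\le h_{\min}/c$ then forces $\mathbf g^k\equiv 0$ on $(0,T)$. One further subdomain sweep with vanishing interface data recovers the exact solution on every $\Omega_i$, so convergence occurs in at most $k+1$ iterations.

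\textbf{Main obstacle.} The delicate step is verifying the no-constant-term property of $M(s)$ for the A3 sweep across $2m+1$ subdomains. In a single iteration the centre solve produces Neumann fluxes that are threaded through a chain of alternating Dirichlet--Neumann solves on either side; substituting these intermediate fluxes generates longer off-diagonal entries in $M(s)$. One must show that the additive $(1-\theta)$ term is cancelled uniformly across the entire sweep, and that the chained off-diagonal entries still carry a delay of at least $h_{\min}/c$ rather than collapsing to a polynomial in $s$ that would destroy causality. This combinatorial book-keeping over the A3 sweep order is what I expect to require the most care, and it is the point where the hyperbolic analysis genuinely departs from the parabolic estimate of Theorem~\ref{TheorDNmheat}.
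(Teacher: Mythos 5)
Your proposal follows essentially the same route as the paper's proof: Laplace transform of the error equations, closed-form subdomain solutions, the observation that for $\theta=1/2$ every entry of the interface iteration matrix expands into a series of pure exponentials $e^{-s\tau}$ with $\tau\ge h_{\min}/c$ (the relaxation cancelling the zero-delay term exactly as you describe), and the shifting property $\mathcal{L}^{-1}\{e^{-s\tau}\hat f(s)\}(t)=f(t-\tau)H(t-\tau)$ to conclude that the interface errors vanish on $(0,T)$ after $k$ iterations, with one further sweep giving the exact solution. The only small adjustment is that the central Dirichlet--Dirichlet solve also produces kernels with $\sinh(h_{m+1}s/c)$ in the denominator, such as $\sinh((h_{m+1}-h_m)s/c)/\bigl(2\sinh(h_{m+1}s/c)\cosh(h_m s/c)\bigr)$, so your expansion basis must include $1/\sinh$ and $\coth$ alongside $1/\cosh$ and $\tanh$; these expand into exponential series with the same minimum delay, so the argument goes through unchanged.
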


\section{Auxiliary results}\label{Section4}
We need a few auxiliary results related to Laplace transform to prove our main convergence results. We define the convolution of two functions $g,w: (0,\infty)\rightarrow\mathbb{R}$ by
    $$(g*w)(t):=\int_{0}^{t}g(t-\tau)w(\tau)d\tau.$$
\begin{lemma}\label{SimpleLaplaceLemma}
Let $g$ and $w$ be two real-valued functions in $(0,\infty)$ with
$\hat{w}(s)=\mathcal{L}\left\{ w(t)\right\}$ the Laplace transform of
$w$. Then for $t\in(0,T)$, we have the following properties:
\begin{enumerate}
  \item \label{L1}If $g(t)\geq0$ and $w(t)\geq0$, then $(g*w)(t)\geq0$.

  \item \label{L2} $\| g*w\|_{L^{1}(0,T)}\leq\| g\|_{L^{1}(0,T)}\| w\|_{L^{1}(0,T)}.$

  \item \label{L3} $\bigl|(g*w)(t)\bigl|\leq\| g\|_{L^{\infty}(0,T)}\int_{0}^{T}\bigl|w(\tau)\bigl|d\tau.$

 \item \label{L4} $\int_{0}^{t}w(\tau)d\tau=(H*w)(t)=\mathcal{L}^{-1}\left(
   \frac{\hat{w}(s)}{s}\right) $, $H(t)$ being the Heaviside step
   function.
\end{enumerate}
\end{lemma}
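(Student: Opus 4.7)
The plan is to work directly from the definition $(g*w)(t)=\int_{0}^{t}g(t-\tau)w(\tau)\,d\tau$; each of the four statements then reduces to a short calculation with Fubini's theorem, monotonicity of the integral, and the convolution theorem for Laplace transforms.

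For item \ref{L1}, I simply observe that when $g\ge 0$ and $w\ge 0$ pointwise, the integrand $g(t-\tau)w(\tau)$ is non-negative on $[0,t]$, so $(g*w)(t)\ge 0$. For item \ref{L2}, I would write
\[
\|g*w\|_{L^{1}(0,T)}=\int_{0}^{T}\left|\int_{0}^{t}g(t-\tau)w(\tau)\,d\tau\right|dt\le \int_{0}^{T}\int_{0}^{t}|g(t-\tau)||w(\tau)|\,d\tau\,dt,
\]
and then apply Fubini on the region $\{0\le\tau\le t\le T\}$ to swap the order of integration, obtaining $\int_{0}^{T}|w(\tau)|\int_{\tau}^{T}|g(t-\tau)|\,dt\,d\tau$. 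The substitution $s=t-\tau$ turns the inner integral into $\int_{0}^{T-\tau}|g(s)|\,ds\le\|g\|_{L^{1}(0,T)}$, which leaves a factor of $\|w\|_{L^{1}(0,T)}$ outside, as claimed.

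For item \ref{L3}, I pull the $L^\infty$-norm of $g$ outside the integral,
\[
\bigl|(g*w)(t)\bigr|\le\int_{0}^{t}|g(t-\tau)||w(\tau)|\,d\tau\le \|g\|_{L^{\infty}(0,T)}\int_{0}^{t}|w(\tau)|\,d\tau,
\]
and enlarge the interval of the remaining integral from $(0,t)$ to $(0,T)$. For item \ref{L4}, I note that $H(t-\tau)=1$ whenever $0\le\tau<t$, so $(H*w)(t)=\int_{0}^{t}w(\tau)\,d\tau$ directly from the definition. Combining this with the standard facts $\mathcal{L}\{H\}(s)=1/s$ and the convolution theorem $\mathcal{L}\{g*w\}=\mathcal{L}\{g\}\cdot\mathcal{L}\{w\}$ yields $\mathcal{L}\{H*w\}(s)=\hat{w}(s)/s$, and inversion gives the identity.

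There is no real obstacle here; these are textbook properties of one-sided convolutions. The only subtlety worth flagging is that item \ref{L2} is a version of Young's convolution inequality restricted to the half-line, which works cleanly because the convolution kernel is supported on $[0,t]\subset[0,T]$, so the truncation to $(0,T)$ is compatible with Fubini; and in item \ref{L4} the invocation of the convolution theorem implicitly requires $w$ to be Laplace-transformable on $(0,\infty)$, which is the standing assumption of the lemma.
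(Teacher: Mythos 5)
Your proof is correct, and it follows exactly the route the paper intends: the paper's own proof consists of the single remark that ``the proofs follow directly from the definitions,'' and your argument simply writes out those definitional computations (pointwise positivity, Fubini on the triangle $\{0\leq\tau\leq t\leq T\}$, pulling out the sup norm, and the convolution theorem with $\mathcal{L}\{H\}(s)=1/s$) in full. Nothing further is needed.
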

\begin{proof}
  The proofs follow directly from the definitions.
\hfill \end{proof}
\begin{lemma}[Limit]\label{LimitLemma}
  Let, $w(t)$ be a continuous and $L^{1}$-integrable function on
  $(0,\infty)$ with $w(t)\geq0$ for all $t\geq0$, and
  $\hat{w}(s)=\mathcal{L}\left\{ w(t)\right\}$ be its Laplace
  transform. Then, for $\tau>0$, we have the bound
  \begin{equation*}
    \int_{0}^{\tau}|w(t)|dt\leq \lim_{s\rightarrow0+}\hat{w}(s).
  \end{equation*}
\end{lemma}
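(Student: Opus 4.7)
The plan is to exploit the nonnegativity of $w$ to identify the one-sided limit $\lim_{s\to 0^+}\hat{w}(s)$ with the full integral $\int_0^\infty w(t)\,dt$, and then bound the truncated integral $\int_0^\tau w(t)\,dt$ by this larger quantity.

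First, since $w(t)\geq 0$ for all $t\geq 0$, I can immediately drop the absolute value and rewrite the left-hand side as $\int_0^\tau w(t)\,dt$. The key step is then to show
\[
\lim_{s\to 0^+}\hat{w}(s)=\int_0^\infty w(t)\,dt.
\]
Writing $\hat{w}(s)=\int_0^\infty e^{-st}w(t)\,dt$, I observe that for $s\geq 0$ and $t\geq 0$ one has $0\leq e^{-st}w(t)\leq w(t)$, and $w\in L^1(0,\infty)$ provides an integrable dominating function. As $s\to 0^+$, $e^{-st}w(t)\to w(t)$ pointwise, so the dominated convergence theorem (or, equally, monotone convergence, since $e^{-st}w(t)$ increases to $w(t)$ as $s$ decreases to $0$) yields the claimed equality.

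Finally, because $w\geq 0$, the truncated integral satisfies
\[
\int_0^\tau w(t)\,dt\leq \int_0^\infty w(t)\,dt=\lim_{s\to 0^+}\hat{w}(s),
\]
which is the desired bound.

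There is no real obstacle here: the argument is essentially a one-line application of monotone/dominated convergence combined with positivity. The only subtlety worth flagging is that the $L^1$ hypothesis is used twice — once to guarantee that the limit $\lim_{s\to 0^+}\hat{w}(s)$ actually exists and is finite, and once to supply the dominating function needed to pass the limit inside the integral.
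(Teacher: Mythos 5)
Your argument is correct: nonnegativity lets you drop the absolute value, monotone (or dominated) convergence identifies $\lim_{s\rightarrow0+}\hat{w}(s)$ with $\int_{0}^{\infty}w(t)\,dt$, and the truncated integral is trivially bounded by the full one. The paper itself does not give a proof here but defers to the reference \cite{GKM1}, where the argument is essentially the same standard one you have written out.
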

\begin{proof}
For a proof of this lemma, see \cite{GKM1}.
\hfill \end{proof}
\begin{lemma}[Positivity]\label{PositivityLemma}
  Let $\beta>\alpha\geq0$ and $s$ be a complex variable. Then, for
$t\in(0,\infty)$
\[
  \varphi(t):=\mathcal{L}^{-1}\left\{ \frac{\sinh(\alpha\sqrt{s})}
    {\sinh(\beta\sqrt{s})}\right\} \geq0\quad\mbox{and}\quad
  \psi(t):=\mathcal{L}^{-1}\left\{ \frac{\cosh(\alpha\sqrt{s})}
  {\cosh(\beta\sqrt{s})}\right\} \geq0.
\]
\end{lemma}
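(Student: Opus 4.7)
The plan is to identify each inverse transform as $\partial_t u(\alpha,\cdot)$ for the solution $u$ of an auxiliary heat initial-boundary-value problem on $(0,\beta)$, and then exploit monotonicity in $t$ via the parabolic maximum principle. This is structurally cleaner than grinding through a direct Bromwich integral, and matches the physical intuition that these ratios are (up to a factor of $s$) Dirichlet-to-Dirichlet/Dirichlet-to-Neumann Laplace symbols of the heat equation on a slab.

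For $\varphi$, let $u$ solve $u_t = u_{xx}$ on $(0,\beta)\times(0,\infty)$ with $u(x,0)=0$, $u(0,t)=0$, and $u(\beta,t)=1$. Laplace-transforming in $t$ gives the ODE $\hat u_{xx} = s\hat u$ subject to $\hat u(0,s)=0$ and $\hat u(\beta,s)=1/s$, whose explicit solution is $\hat u(x,s) = \sinh(x\sqrt{s})/[s\sinh(\beta\sqrt{s})]$. Since $u(\alpha,0)=0$, one has $\mathcal{L}\{\partial_t u(\alpha,\cdot)\}(s) = s\hat u(\alpha,s) = \sinh(\alpha\sqrt{s})/\sinh(\beta\sqrt{s})$, so by uniqueness of the Laplace transform $\varphi(t) = \partial_t u(\alpha,t)$. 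For $\psi$, I repeat the construction but replace the Dirichlet condition at $x=0$ by the Neumann condition $u_x(0,t)=0$; the boundary-value problem for $\hat u$ then yields $\hat u(x,s) = \cosh(x\sqrt{s})/[s\cosh(\beta\sqrt{s})]$, and the same argument identifies $\psi(t) = \partial_t u(\alpha,t)$.

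To conclude $\partial_t u \geq 0$, fix $\varepsilon>0$ and set $w(x,t) := u(x,t+\varepsilon) - u(x,t)$. Because the inhomogeneous boundary datum is time-independent, $w$ solves the same linear heat equation with the corresponding homogeneous boundary conditions and initial value $w(x,0) = u(x,\varepsilon)$, which is nonnegative by the weak maximum principle applied to $u$ (it gives $0\leq u\leq 1$). Applying the maximum principle to $w$ yields $w\geq 0$, so $u$ is nondecreasing in $t$, and passing to the limit $\varepsilon \to 0^+$ gives the pointwise inequality $\partial_t u(\alpha,t)\geq 0$, hence $\varphi,\psi\geq 0$.

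The only mild obstacle is the corner incompatibility at $(x,t)=(\beta,0)$ produced by the step datum $u(\beta,\cdot)=1$ versus $u(\cdot,0)=0$: $u$ is merely bounded there. But since $\alpha<\beta$, the point $(\alpha,t)$ lies in the interior of the parabolic domain for every $t>0$, so interior parabolic regularity makes $\partial_t u(\alpha,t)$ a bona fide pointwise derivative and legitimizes the Laplace calculation; if preferred, one can mollify the boundary datum, apply the argument above, and pass to the limit. As an alternative fallback, the geometric-series expansion
\[
\frac{\sinh(\alpha\sqrt{s})}{\sinh(\beta\sqrt{s})} = \sum_{n=0}^{\infty}\!\left[e^{-((2n+1)\beta-\alpha)\sqrt{s}} - e^{-((2n+1)\beta+\alpha)\sqrt{s}}\right]
\]
together with the standard inversion $\mathcal{L}^{-1}\{e^{-a\sqrt{s}}\}(t) = (a/2\sqrt{\pi t^{3}})\,e^{-a^{2}/(4t)}$ realizes $\varphi$ as the method-of-images representation of the Dirichlet heat kernel on $(0,\beta)$, whose positivity is classical; an analogous expansion with alternating signs handles $\psi$.
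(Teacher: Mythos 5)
Your argument is correct, but it is genuinely different from what the paper does: the paper gives no proof of this lemma at all and simply cites \cite{GKM1}, where the result is established by expanding the hyperbolic ratios into series of exponentials $e^{-a\sqrt{s}}$ and inverting. Your route instead identifies $\varphi(t)=\partial_t u(\alpha,t)$ and $\psi(t)=\partial_t u(\alpha,t)$ for the heat equation on $(0,\beta)$ with step boundary datum at $x=\beta$ (Dirichlet, resp.\ Neumann, condition at $x=0$), and derives nonnegativity from time-monotonicity via the comparison $w(x,t)=u(x,t+\varepsilon)-u(x,t)\geq 0$. All the steps check out: the Laplace-transformed boundary value problems do produce $\hat u(x,s)=\sinh(x\sqrt{s})/(s\sinh(\beta\sqrt{s}))$ and $\cosh(x\sqrt{s})/(s\cosh(\beta\sqrt{s}))$, the identity $\mathcal{L}\{\partial_t u(\alpha,\cdot)\}=s\hat u(\alpha,s)$ is legitimate since $u(\alpha,0^+)=0$ and $\partial_t u(\alpha,\cdot)$ is nonnegative with $\int_0^T\partial_t u(\alpha,\tau)\,d\tau=u(\alpha,T)\leq 1$, and interior parabolic regularity disposes of the corner singularity at $(\beta,0)$. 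What your approach buys is a self-contained, PDE-level proof that makes the positivity transparent (these symbols are Poisson kernels of a monotone heat flow); what the reference's series approach buys is an explicit formula for $\varphi$ and $\psi$ that is also used elsewhere (e.g.\ in the proof of Lemma \ref{KernelPositive} and in the $\mathrm{erfc}$ estimates), which a pure maximum-principle argument does not provide.

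Two minor caveats. First, the lemma permits $\alpha=0$: for $\varphi$ this is the trivial case $\varphi\equiv 0$, and for $\psi$ the evaluation point sits on the Neumann boundary, which your interior-regularity remark does not literally cover; an even reflection across $x=0$ converts the mixed problem into a Dirichlet problem on $(-\beta,\beta)$ and restores interior regularity at $x=0$, so this is cosmetic. Second, your fallback paragraph should not be read as claiming term-by-term positivity of the series: the inverse transform of $e^{-a\sqrt{s}}-e^{-b\sqrt{s}}$ with $0<a<b$ is \emph{negative} for large $t$ (the function $x\mapsto xe^{-x^2/(4t)}$ is increasing on $(0,\sqrt{2t})$), so the series identifies $\varphi$ with the boundary normal derivative of the Dirichlet heat kernel on $(0,\beta)$ rather than with the kernel itself, and positivity of that normal derivative still requires a Hopf-type boundary argument. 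Since this is offered only as an alternative and your main argument is complete, it does not affect the validity of the proof.
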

\begin{proof}
For a proof, we again cite \cite{GKM1}.
\hfill \end{proof}
\begin{lemma}\label{KernelPositive}
  Let $\alpha,\beta > 0$ be two real numbers and $s$ be a complex variable. Set
  \begin{equation}\label{Kernel}
  \chi(s):= \frac{\sinh((\alpha-\beta)\sqrt{s})}{\cosh(\alpha\sqrt{s})\sinh(\beta\sqrt{s})}.
  \end{equation}
  Then 
  \[
  \int_{0}^{T}\bigl|\mathcal{L}^{-1}\left\{\chi(\cdot)\right\}(\tau)\bigr|d\tau={\displaystyle \lim_{s\rightarrow0+}}\chi(s)=\left|\frac{\alpha-\beta}{\beta}\right|.
  \]
\end{lemma}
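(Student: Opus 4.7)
\emph{Strategy.} The plan is to show that $\mathcal{L}^{-1}\{\chi\}$ is sign-definite on $(0,\infty)$, with sign equal to $\mathrm{sgn}(\alpha-\beta)$; once this is established, the absolute value in the integral becomes trivial and Lemma \ref{LimitLemma} applied to the nonnegative function $|\mathcal{L}^{-1}\{\chi\}|$ yields the stated bound. Evaluation of the limit $\lim_{s \to 0^+}\chi(s)=(\alpha-\beta)/\beta$ is a routine Taylor expansion using $\sinh z = z + O(z^3)$ and $\cosh z = 1 + O(z^2)$ in numerator and denominator, so I treat that step as mechanical and concentrate on the sign.

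\emph{Sign of the inverse transform.} I would split by the sign of $\alpha - \beta$. For $\alpha > \beta$, I use the product-to-sum identity $\cosh(\alpha\sqrt{s})\sinh(\beta\sqrt{s}) = \tfrac{1}{2}[\sinh((\alpha+\beta)\sqrt{s}) - \sinh((\alpha-\beta)\sqrt{s})]$ to rewrite $\chi = 2\mu/(1-\mu)$, where $\mu(s) := \sinh((\alpha-\beta)\sqrt{s})/\sinh((\alpha+\beta)\sqrt{s})$. Since $0 < \mu(s) < 1$ for real $s > 0$, I expand as a Neumann series $\chi = 2\sum_{n\geq 1}\mu^n$. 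Lemma \ref{PositivityLemma}, applied to the pair $(\alpha-\beta,\alpha+\beta)$, gives $\mathcal{L}^{-1}\{\mu\} \geq 0$, and iterated convolution (the positivity property in Lemma \ref{SimpleLaplaceLemma}) propagates this to every $\mathcal{L}^{-1}\{\mu^n\}$. Termwise inversion is justified because $\|\mathcal{L}^{-1}\{\mu^n\}\|_{L^1} = \mu(0)^n = [(\alpha-\beta)/(\alpha+\beta)]^n$ (using Lemma \ref{LimitLemma} on each nonnegative term), whose sum is a convergent geometric series. Hence $\mathcal{L}^{-1}\{\chi\} \geq 0$. For $\alpha < \beta$, I prefer the simpler factorization $\chi = \tfrac{\sinh((\alpha-\beta)\sqrt{s})}{\sinh(\beta\sqrt{s})} \cdot \tfrac{1}{\cosh(\alpha\sqrt{s})}$: Lemma \ref{PositivityLemma} (with $\beta-\alpha < \beta$) shows the first factor has nonpositive inverse, while taking lower parameter $0$ in the $\cosh$ statement shows the second has nonnegative inverse; convolving a nonpositive with a nonnegative kernel yields a nonpositive function, so $\mathcal{L}^{-1}\{\chi\} \leq 0$.

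\emph{Conclusion and main obstacle.} Writing $\varepsilon := \mathrm{sgn}(\alpha-\beta)$, the function $w(\tau) := \varepsilon\, \mathcal{L}^{-1}\{\chi\}(\tau)$ is nonnegative with Laplace transform $\varepsilon\chi(s)$, so Lemma \ref{LimitLemma} gives $\int_0^T |\mathcal{L}^{-1}\{\chi\}(\tau)|\, d\tau = \int_0^T w(\tau)\, d\tau \leq \varepsilon\lim_{s\to 0^+}\chi(s) = |(\alpha-\beta)/\beta|$, with equality in the limit $T\to\infty$ since $w \in L^1(0,\infty)$. The main obstacle is the $\alpha > \beta$ case, specifically when $\alpha \geq 2\beta$: the direct two-factor approach breaks down because $\sinh((\alpha-\beta)\sqrt{s})/\sinh(\beta\sqrt{s})$ then falls outside the range of Lemma \ref{PositivityLemma}. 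The Neumann-series detour through $\mu$ (whose parameters always satisfy the hypotheses of Lemma \ref{PositivityLemma}, since $|\alpha-\beta|<\alpha+\beta$ whenever $\alpha,\beta>0$) is the natural workaround, but it forces me to justify carefully the interchange of Laplace inversion with an infinite sum, which is the only genuinely delicate step of the proof.
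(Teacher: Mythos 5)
Your argument is correct, and it is more complete than what the paper provides: the paper's ``proof'' of this lemma is only the remark that one distinguishes $\alpha>\beta$ from $\alpha<\beta$ and invokes Lemma \ref{PositivityLemma}, with the details deferred to \cite{GKM1}. Your treatment of the two cases matches that outline in spirit, but you correctly identify and resolve the one point where a naive two-factor splitting fails: for $\alpha-\beta\geq\beta$ the ratio $\sinh((\alpha-\beta)\sqrt{s})/\sinh(\beta\sqrt{s})$ is outside the scope of Lemma \ref{PositivityLemma}, and your rewriting $\chi=2\mu/(1-\mu)$ with $\mu=\sinh((\alpha-\beta)\sqrt{s})/\sinh((\alpha+\beta)\sqrt{s})$, followed by the Neumann series $2\sum_{n\geq1}\mu^n$ whose terms are iterated convolutions of the nonnegative kernel $\mathcal{L}^{-1}\{\mu\}$, handles all of $\alpha>\beta$ uniformly since $0\leq\alpha-\beta<\alpha+\beta$ always holds. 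The interchange of inversion and summation is indeed the only delicate step, and your justification is adequate: the partial sums converge in $L^{1}(0,\infty)$ because $\|\mathcal{L}^{-1}\{\mu^{n}\}\|_{L^{1}}\leq[(\alpha-\beta)/(\alpha+\beta)]^{n}$ by Young's inequality (part \ref{L2} of Lemma \ref{SimpleLaplaceLemma}) and Lemma \ref{LimitLemma}, so the limit is a nonnegative $L^{1}$ function whose transform is $\chi/2$, and uniqueness of the Laplace transform finishes it. The case $\alpha<\beta$ via $-\frac{\sinh((\beta-\alpha)\sqrt{s})}{\sinh(\beta\sqrt{s})}\cdot\frac{1}{\cosh(\alpha\sqrt{s})}$ is exactly the standard factorization. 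One further point in your favour: you correctly observe that for finite $T$ the stated identity is really an inequality $\int_{0}^{T}|\mathcal{L}^{-1}\{\chi\}|\,d\tau\leq|(\alpha-\beta)/\beta|$, with equality only as $T\to\infty$; this is consistent with how the lemma is actually used later in Section \ref{Section5}, where only the upper bound is needed.
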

\begin{proof}
There are two possibilities: $\alpha>\beta$ or $\alpha<\beta$. In either case, we need Lemma \ref{PositivityLemma} to prove positivity of the expression. For a detailed proof, see \cite{GKM1}.
\hfill \end{proof}

\section{Proof of main results}\label{Section5}
We now prove the main convergence results for the DNWR algorithm applied to the heat and wave equations, stated in Section \ref{Section2} and \ref{Section3}.
\subsection{Proof of Theorem \ref{TheorDNmheat}}
We start by applying the Laplace transform to the homogeneous Dirichlet
subproblem in \eqref{DNmultieq1}, and obtain 
\[
s\hat{u}_{m+1}^k-\nu\hat{u}_{m+1,xx}^k=0,\quad\hat{u}_{m+1}^k(x_{m},s)=\hat{g}_{m}^{k-1}(s),\quad\hat{u}_{m+1}^k(x_{m+1},s)=\hat{g}_{m+1}^{k-1}(s),
\]
Defining $\sigma_{i}:=\sinh\left(h_{i}\sqrt{s/\nu}\right)$ and $\gamma_{i}:=\cosh\left(h_{i}\sqrt{s/\nu}\right)$,
the subproblem \eqref{DNmultieq1} solution becomes 
\[
\arraycolsep0.004em\begin{array}{rcl}
\hat{u}_{m+1}^{k}(x,s)&=&\frac{1}{\sigma_{m+1}}\left(\hat{g}_{m+1}^{k-1}(s)\sinh\left((x\!-\!x_{m})\sqrt{s/\nu}\right)\!+\!\hat{g}_{m}^{k-1}(s)\sinh\left((x_{m+1}\!-\!x)\sqrt{s/\nu}\right)\right).
\end{array}
\]
Similarly the solutions of the subproblems \eqref{DNmultieq2} in Laplace
space are
\[
\arraycolsep0.1em\begin{array}{rcl}
\hat{u}_{i}^{k}(x,s) & = & \frac{1}{\gamma_{i}}\frac{\hat{w}_{i}^{k}}{\sqrt{s/\nu}}\sinh((x-x_{i-1})\sqrt{s/\nu})+\frac{1}{\gamma_{i}}\hat{g}_{i-1}^{k-1}\cosh((x_{i}-x)\sqrt{s/\nu}),\\
\hat{u}_{j}^{k}(x,s) & = & \frac{1}{\gamma_{j}}\hat{g}_{j}^{k-1}\cosh((x-x_{j-1})\sqrt{s/\nu})+\frac{1}{\gamma_{j}}\frac{\hat{w}_{j-1}^{k}}{\sqrt{s/\nu}}\sinh((x_{j}-x)\sqrt{s/\nu}),
\end{array}
\]
for $1\leq i\leq m$ and $m+2\leq j\leq2m+1$.
Therefore for $\theta=1/2$ the update conditions \eqref{DNmultieq3}
become
\begin{equation}
\arraycolsep0.2em\begin{array}{rclrcl}
\hat{g}_{i}^{k} & = & \frac{1}{2\gamma_{i}}\hat{g}_{i-1}^{k-1}+\frac{1}{2}\hat{g}_{i}^{k-1}+\frac{\sigma_{i}}{2\gamma_{i}}\frac{\hat{w}_{i}^{k}}{\sqrt{s/\nu}},\;&
\hat{g}_{j}^{k} & = & \frac{\sigma_{j+1}}{2\gamma_{j+1}}\frac{\hat{w}_{j}^{k}}{\sqrt{s/\nu}}+\frac{1}{2}\hat{g}_{j}^{k-1}+\frac{1}{2\gamma_{j+1}}\hat{g}_{j+1}^{k-1},
\end{array}\label{DNmHeatupdate2}
\end{equation}
for $1\leq i\leq m$, $m+1\leq j\leq2m$ and
\begin{equation}
\arraycolsep0.01em\begin{array}{rcl}
\hat{w}_{i}^{k} & = & -\sqrt{\frac{s}{\nu}}\frac{\sigma_{i+1}}{\gamma_{i+1}}\hat{g}_{i}^{k-1}+\frac{1}{\gamma_{i+1}}\hat{w}_{i+1}^{k},1\leq i\leq m-1;\;\hat{w}_{m}^{k}=-\sqrt{\frac{s}{\nu}}\frac{\gamma_{m+1}}{\sigma_{m+1}}\hat{g}_{m}^{k-1}+\frac{\sqrt{s/\nu}}{\sigma_{m+1}}\hat{g}_{m+1}^{k-1},\\
\hat{w}_{m+1}^{k} & = & \frac{\sqrt{s/\nu}}{\sigma_{m+1}}\hat{g}_{m}^{k-1}-\sqrt{\frac{s}{\nu}}\frac{\gamma_{m+1}}{\sigma_{m+1}}\hat{g}_{m+1}^{k-1};\quad\hat{w}_{j}^{k}=\frac{1}{\gamma_{j}}\hat{w}_{j-1}^{k}-\sqrt{\frac{s}{\nu}}\frac{\sigma_{j}}{\gamma_{j}}\hat{g}_{j}^{k-1},\; m+2\leq j\leq2m.
\end{array}\label{DNmHeatupdate1}
\end{equation}
We choose $\bar{g}_{i}^{k}:=\gamma_{i}\hat{g}_{i}^{k}$, $\bar{w}_{i}^{k}:=\frac{\hat{w}_{i}^{k}}{\sqrt{s/\nu}}\sigma_{i},1\leq i\leq m$
and $\bar{g}_{j}^{k}:=\gamma_{j+1}\hat{g}_{j}^{k},\bar{w}_{j}^{k}:=\frac{\hat{w}_{j}^{k}}{\sqrt{s/\nu}}\sigma_{j+1},m+1\leq j\leq2m$
with $\gamma_{0}=\gamma_{2m+2}=1$ in \eqref{DNmHeatupdate2}-\eqref{DNmHeatupdate1}
to get
\[
\arraycolsep0.007em\begin{array}{rclrcl}
\bar{g}_{i}^{k}&=&\frac{1}{2\gamma_{i-1}}\bar{g}_{i-1}^{k-1}\!+\!\frac{1}{2}\bar{g}_{i}^{k-1}\!+\!\frac{1}{2}\bar{w}_{i}^{k},\,1\leq i\leq m;\;&\bar{g}_{j}^{k}&=&\frac{1}{2}\bar{w}_{j}^{k}\!+\!\frac{1}{2}\bar{g}_{j}^{k-1}\!\!+\!\!\frac{1}{2\gamma_{j+2}}\bar{g}_{j+1}^{k-1},\, m+1\leq j\leq2m,
\end{array}
\]
and for $1\leq i\leq m-1$ and $m+2\leq j\leq2m$,
\[
\arraycolsep0.08em\begin{array}{rcl}
\bar{w}_{i}^{k} & = & -\frac{\sigma_{i}\sigma_{i+1}}{\gamma_{i}\gamma_{i+1}}\bar{g}_{i}^{k-1}\!+\!\frac{\sigma_{i}}{\sigma_{i+1}\gamma_{i+1}}\bar{w}_{i+1}^{k};\;\bar{w}_{m}^{k}=-\frac{\sigma_{m}\gamma_{m+1}}{\gamma_{m}\sigma_{m+1}}\bar{g}_{m}^{k-1}\!+\!\frac{\sigma_{m}}{\sigma_{m+1}\gamma_{m+2}}\bar{g}_{m+1}^{k-1},\\
\bar{w}_{m+1}^{k} & = & \frac{\sigma_{m+2}}{\gamma_{m}\sigma_{m+1}}\bar{g}_{m}^{k-1}-\frac{\gamma_{m+1}\sigma_{m+2}}{\sigma_{m+1}\gamma_{m+2}}\bar{g}_{m+1}^{k-1};\quad\bar{w}_{j}^{k}=\frac{\sigma_{j+1}}{\gamma_{j}\sigma_{j}}\bar{w}_{j-1}^{k}-\frac{\sigma_{j}\sigma_{j+1}}{\gamma_{j}\gamma_{j+1}}\bar{g}_{j}^{k-1}.
\end{array}
\]
Therefore in matrix form, the system becomes
\[
\arraycolsep0.03emA\begin{pmatrix}\bar{g}_{1}^{k}\\
\bar{w}_{1}^{k}\\
\vdots\\
\vdots\\
\bar{g}_{m}^{k}\\
\bar{w}_{m}^{k}\\
\bar{w}_{m+1}^{k}\\
\bar{g}_{m+1}^{k}\\
\vdots\\
\vdots\\
\bar{w}_{2m}^{k}\\
\bar{g}_{2m}^{k}
\end{pmatrix}=B\begin{pmatrix}\bar{g}_{1}^{k-1}\\
\vdots\\
\bar{g}_{m}^{k-1}\\
\bar{g}_{m+1}^{k-1}\\
\vdots\\
\bar{g}_{2m}^{k-1}
\end{pmatrix},\:\textrm{where } A=\begin{pmatrix}L_{1} & \mathbf{0}\\
\mathbf{0} & L_{2}
\end{pmatrix}_{4m\times4m},
\]
with
\[
\arraycolsep0.06emL_{1}=\begin{bmatrix}1 & -\frac{1}{2}\\
 & 1 & 0 & -\frac{\sigma_{1}}{\sigma_{2}\gamma_{2}}\\
 &  & 1 & -\frac{1}{2}\\
 &  &  & 1 & 0 & -\frac{\sigma_{2}}{\sigma_{3}\gamma_{3}}\\
 &  &  &  & \ddots & \ddots\\
 &  &  &  &  & 1 & 0 & -\frac{\sigma_{m-1}}{\sigma_{m}\gamma_{m}}\\
 &  &  &  &  &  & 1 & -\frac{1}{2}\\
 &  &  &  &  &  &  & 1
\end{bmatrix},L_{2}=\begin{bmatrix}1\\
-\frac{1}{2} & 1\\
-\frac{\sigma_{i+1}}{\sigma_{i}\gamma_{i}} & 0 & 1\\
 &  & -\frac{1}{2} & 1\\
 &  & \ddots & \ddots & \ddots\\
 &  &  &  & -\frac{1}{2} & 1\\
 &  &  &  & -\frac{\sigma_{2m+1}}{\sigma_{2m}\gamma_{2m}} & 0 & 1\\
 &  &  &  &  &  & -\frac{1}{2} & 1
\end{bmatrix},
\]
and 
\[
\arraycolsep0.08emB=\left[\begin{array}{cccc|cccc}
\frac{1}{2} &  &  & \\
-\frac{\sigma_{1}\sigma_{2}}{\gamma_{1}\gamma_{2}} &  &  & \\
\frac{1}{2\gamma_{1}} & \frac{1}{2} &  & \\
 & -\frac{\sigma_{2}\sigma_{3}}{\gamma_{2}\gamma_{3}} &  & \\
 & \frac{1}{2\gamma_{2}} & \frac{1}{2} & \\
 &  & \ddots & \\
 &  & \frac{1}{2\gamma_{m-1}} & \frac{1}{2}\\
 &  &  & -\frac{\sigma_{m}\gamma_{m+1}}{\gamma_{m}\sigma_{m+1}} & \frac{\sigma_{m}}{\sigma_{m+1}\gamma_{m+2}}\\
\hline  &  &  & \frac{\sigma_{m+2}}{\sigma_{m+1}\gamma_{m}} & -\frac{\sigma_{m+2}\gamma_{m+1}}{\gamma_{m+2}\sigma_{m+1}}\\
 &  &  &  & \frac{1}{2} & \frac{1}{2\gamma_{m+3}}\\
 &  &  &  &  & -\frac{\sigma_{m+2}\sigma_{m+3}}{\gamma_{m+2}\gamma_{m+3}}\\
 &  &  &  &  & \frac{1}{2} & \frac{1}{2\gamma_{m+4}}\\
 &  &  &  &  &  & \ddots\\
 &  &  &  &  &  & \frac{1}{2} & \frac{1}{2\gamma_{2m+1}}\\
 &  &  &  &  &  &  & -\frac{\sigma_{2m}\sigma_{2m+1}}{\gamma_{2m}\gamma_{2m+1}}\\
 &  &  &  &  &  &  & \frac{1}{2}
\end{array}\right]_{4m\times2m}.
\]
Thus inverting $A$, we obtain
\begin{equation}\label{matrixP}
\begin{pmatrix}
\bar{g}_{1}^{k}\\
\vdots\\
\bar{g}_{m}^{k}\\
\bar{g}_{m+1}^{k}\\
\vdots\\
\bar{g}_{2m}^{k}
\end{pmatrix}=P\begin{pmatrix}\bar{g}_{1}^{k-1}\\
\vdots\\
\bar{g}_{m}^{k-1}\\
\bar{g}_{m+1}^{k-1}\\
\vdots\\
\bar{g}_{2m}^{k-1}
\end{pmatrix},
\end{equation}
where
\[
\arraycolsep0.008emP=\left(\begin{array}{c|c}
K_{1} & K_{2}\\
\hline K_{3} & K_{4}
\end{array}\right)_{2m\times2m},K_{1}=\begin{bmatrix}\frac{\gamma_{1,2}}{2\gamma_{1}\gamma_{2}} & -\frac{\sigma_{1}\sigma_{3}}{2\gamma_{2}^{2}\gamma_{3}} & \cdots & \frac{-\sigma_{1}\sigma_{m}}{2\gamma_{2}\ldots\gamma_{m-2}\gamma_{m-1}^{2}\gamma_{m}} & \frac{-\sigma_{1}\gamma_{m+1}}{2\gamma_{2}\ldots\gamma_{m-1}\gamma_{m}^{2}\sigma_{m+1}}\\
\frac{1}{2\gamma_{1}} & \frac{\gamma_{2,3}}{2\gamma_{2}\gamma_{3}} & \ddots & \frac{-\sigma_{2}\sigma_{m}}{2\gamma_{3}\ldots\gamma_{m-2}\gamma_{m-1}^{2}\gamma_{m}} & \frac{-\sigma_{2}\gamma_{m+1}}{2\gamma_{3}\ldots\gamma_{m-1}\gamma_{m}^{2}\sigma_{m+1}}\\
0 & \ddots & \ddots & \vdots & \vdots\\
\vdots & 0 & \frac{1}{2\gamma_{m-2}} & \frac{\gamma_{m-1,m}}{2\gamma_{m-1}\gamma_{m}} & -\frac{\sigma_{m-1}\gamma_{m+1}}{2\gamma_{m}^{2}\sigma_{m+1}}\\
0 & \cdots & 0 & \frac{1}{2\gamma_{m-1}} & \frac{\sigma_{m+1,m}}{2\sigma_{m+1}\gamma_{m}}
\end{bmatrix},
\]
\[
\arraycolsep0.1emK_{2}=\begin{bmatrix}\frac{\sigma_{1}}{2\gamma_{2}\ldots\gamma_{m}\sigma_{m+1}\gamma_{m+2}} & 0 & \cdots & \cdots & 0\\
\frac{\sigma_{2}}{2\gamma_{3}\ldots\gamma_{m}\sigma_{m+1}\gamma_{m+2}} & \vdots & \vdots & \vdots & \vdots\\
\vdots & \vdots & \vdots & \vdots & \vdots\\
\frac{\sigma_{m-1}}{2\gamma_{m}\sigma_{m+1}\gamma_{m+2}} & \vdots & \vdots & \vdots & \vdots\\
\frac{\sigma_{m}}{2\sigma_{m+1}\gamma_{m+2}} & 0 & \cdots & \cdots & 0
\end{bmatrix},
K_{3}=\begin{bmatrix}0 & \cdots & \cdots & 0 & \frac{\sigma_{m+2}}{2\gamma_{m}\sigma_{m+1}}\\
\vdots & \vdots & \vdots & \vdots & \frac{\sigma_{m+3}}{2\gamma_{m}\sigma_{m+1}\gamma_{m+2}}\\
\vdots & \vdots & \vdots & \vdots & \vdots\\
\vdots & \vdots & \vdots & \vdots & \frac{\sigma_{2m}}{2\gamma_{m}\sigma_{m+1}\gamma_{m+2}\ldots\gamma_{2m-1}}\\
0 & \cdots & \cdots & 0 & \frac{\sigma_{2m+1}}{2\gamma_{m}\sigma_{m+1}\gamma_{m+2}\ldots\gamma_{2m}}
\end{bmatrix},
\]
 \[
\arraycolsep0.08emK_{4}=\begin{bmatrix}\frac{\sigma_{m+1,m+2}}{2\sigma_{m+1}\gamma_{m+2}} & \frac{1}{2\gamma_{m+3}} & 0 & \cdots & 0\\
-\frac{\sigma_{m+3}\gamma_{m+1}}{2\sigma_{m+1}\gamma_{m+2}^{2}} & \frac{\gamma_{m+2,m+3}}{2\gamma_{m+2}\gamma_{m+3}} & \frac{1}{2\gamma_{m+4}} & 0 & \vdots\\
\vdots & \vdots & \ddots & \ddots & 0\\
\frac{-\sigma_{2m}\gamma_{m+1}}{2\sigma_{m+1}\gamma_{m+2}^{2}\gamma_{m+3}\ldots\gamma_{2m-1}} & \frac{-\sigma_{2m}\sigma_{m+2}}{2\gamma_{m+2}\gamma_{m+3}^{2}\gamma_{m+4}\ldots\gamma_{2m-1}} & \ddots & \frac{\gamma_{2m-1,2m}}{2\gamma_{2m-1}\gamma_{2m}} & \frac{1}{2\gamma_{2m+1}}\\
\frac{-\sigma_{2m+1}\gamma_{m+1}}{2\sigma_{m+1}\gamma_{m+2}^{2}\gamma_{m+3}\ldots\gamma_{2m}} & \frac{-\sigma_{2m+1}\sigma_{m+2}}{2\gamma_{m+2}\gamma_{m+3}^{2}\gamma_{m+4}\ldots\gamma_{2m}} & \cdots & -\frac{\sigma_{2m-1}\sigma_{2m+1}}{2\gamma_{2m-1}\gamma_{2m}^{2}} & \frac{\gamma_{2m,2m+1}}{2\gamma_{2m}\gamma_{2m+1}}
\end{bmatrix},
\]
with $\gamma_{i,j}=\cosh\left((h_{i}-h_{j})\sqrt{s/\nu}\right),\sigma_{i,j}=\sinh\left((h_{i}-h_{j})\sqrt{s/\nu}\right)$. Therefore we can write the first equation as:
\begin{equation}
\arraycolsep0.008em\begin{array}{rcl}
2\hat{g}_{1}^{k} & = & \frac{\gamma_{1,2}}{\gamma_{1}\gamma_{2}}\hat{g}_{1}^{k-1}
\!-\!\frac{\sigma_{1}\sigma_{3}}{\gamma_{1}\gamma_{2}\gamma_{3}}\hat{g}_{2}^{k-1}\!-\!\cdots\!-\!\frac{\sigma_{1}\sigma_{m}}{\gamma_{1}\gamma_{2}\ldots\gamma_{m-1}\gamma_{m}}\hat{g}_{m-1}^{k-1}\!-\!\frac{\sigma_{1}\gamma_{m+1}}{\gamma_{1}\ldots\gamma_{m}\sigma_{m+1}}\hat{g}_{m}^{k-1}\!+\!\frac{\sigma_{1}}{\gamma_{1}\ldots\gamma_{m}\sigma_{m+1}}\hat{g}_{m+1}^{k-1}.
\end{array}\label{firsteq}
\end{equation}
So with the substitution $\hat{v}_{i}^{k}:=2^{k}\gamma^{k}\hat{g}_{i}^{k}$,
$\gamma=\cosh\left(h_{\min}\sqrt{s/\nu}\right)$, we get
\[
\arraycolsep0.008em\begin{array}{rcl}
\hat{v}_{1}^{k} & = & \frac{\gamma\gamma_{1,2}}{\gamma_{1}\gamma_{2}}\hat{v}_{1}^{k-1}
\!-\!\frac{\gamma\sigma_{1}\sigma_{3}}{\gamma_{1}\gamma_{2}\gamma_{3}}\hat{v}_{2}^{k-1}\!-\!\cdots\!-\!\frac{\gamma\sigma_{1}\sigma_{m}}{\gamma_{1}\gamma_{2}\ldots\gamma_{m-1}\gamma_{m}}\hat{v}_{m-1}^{k-1}\!-\!\frac{\gamma\sigma_{1}\gamma_{m+1}}{\gamma_{1}\ldots\gamma_{m}\sigma_{m+1}}\hat{v}_{m}^{k-1}\!+\!\frac{\gamma\sigma_{1}}{\gamma_{1}\ldots\gamma_{m}\sigma_{m+1}}\hat{v}_{m+1}^{k-1}\\
 & =: & \displaystyle{\sum_{i=1}^{m+1}}\hat{d}_{1,i}\hat{v}_{i}^{k-1}.
\end{array}
\]
Now using Lemma \ref{PositivityLemma} and \ref{LimitLemma}, we obtain 
\[
\int_{0}^{\infty}|d_{1,1}(t)|dt\le\lim_{s\rightarrow0+}\frac{\cosh(h_{\min}\sqrt{s/\nu})\cosh\left((h_{1}-h_{2})\sqrt{s/\nu}\right)}{\cosh(h_{2}\sqrt{s/\nu})\cosh(h_{1}\sqrt{s/\nu})}\leq1,
\]
so that using Lemma \ref{SimpleLaplaceLemma}, part \ref{L3}, we get 
\[
\left\Vert\mathcal{L}^{-1}\left(\hat{d}_{1,1}\,\hat{v}_{1}^{k-1}(s)\right)\right\Vert_{L^{\infty}(0,T)}\leq\|v_{1}^{k-1}\|_{L^{\infty}(0,T)}.
\]
Moreover, we write for $j=2,\ldots,m-1$ 
\begin{eqnarray*}
\hat{d}_{1,j} \!& = &\! \frac{\gamma}{\gamma_{2}\ldots\gamma_{j}}\left(1-\frac{\sigma_{1}\sigma_{j+1}}{\gamma_{1}\gamma_{j+1}}-1\right)=\frac{\gamma\cosh\left((h_{1}-h_{j+1})\sqrt{s/\nu}\right)}{\gamma_{2}\ldots\gamma_{j}\gamma_{1}\gamma_{j+1}}-\frac{\gamma}{\gamma_{2}\ldots\gamma_{j}}\\
\! & = & \!\frac{\cosh\left((h_{\min}+h_{1}-h_{j+1})\sqrt{s/\nu}\right)+\cosh\left((h_{\min}-h_{1}+h_{j+1})\sqrt{s/\nu}\right)}{2\gamma_{2}\ldots\gamma_{j}\gamma_{1}\gamma_{j+1}}-\frac{\gamma}{\gamma_{2}\ldots\gamma_{j}}.
\end{eqnarray*}
Now $-h_{j+1}\leq h_{\min}+h_{1}-h_{j+1}=h_{1}+h_{\min}-h_{j+1}\leq h_{1}$,
so for the first term, we choose the pairing 
\[
\frac{1}{\gamma_{2}\ldots\gamma_{j}}\cdot\frac{1}{2\cosh(h_{j+1}\sqrt{s/\nu})}\cdot\frac{\cosh\left((h_{\min}+h_{1}-h_{j+1})\sqrt{s/\nu}\right)}{\cosh(h_{1}\sqrt{s/\nu})}
\]
 and 
\[
\frac{1}{\gamma_{2}\ldots\gamma_{j}}\cdot\frac{1}{2\cosh(h_{1}\sqrt{s/\nu})}\cdot\frac{\cosh\left((h_{\min}-h_{1}+h_{j+1})\sqrt{s/\nu}\right)}{\cosh(h_{j+1}\sqrt{s/\nu})}.
\]
 We therefore get by Lemma \ref{PositivityLemma} and \ref{LimitLemma}
\[
\arraycolsep0.1em\begin{array}{rcl}
\int_{0}^{\infty}|d_{1,j}(t)|\, dt & \le & {\displaystyle \lim_{s\rightarrow0+}}\left(\frac{\cosh\left((h_{\min}+h_{1}-h_{j+1})\sqrt{s/\nu}\right)+\cosh\left((h_{\min}-h_{1}+h_{j+1})\sqrt{s/\nu}\right)}{2\gamma_{2}\ldots\gamma_{j}\gamma_{1}\gamma_{j+1}}+\frac{\gamma}{\gamma_{2}\ldots\gamma_{j}}\right)\\
&\leq&2,\end{array}
\]
so that 
\[
\left\Vert \mathcal{L}^{-1}\left(\hat{d}_{1,j}\hat{v}_{j}^{k-1}(s)\right)\right\Vert _{L^{\infty}(0,T)}\leq2\,\|v_{j}^{k-1}\|_{L^{\infty}(0,T)}.
\]
Finally we write $\hat{d}_{1,m}=\frac{\gamma}{\gamma_{2}\ldots\gamma_{m}}\left(1-\frac{\sigma_{1}\gamma_{m+1}}{\gamma_{1}\sigma_{m+1}}-1\right)=-\frac{\gamma}{\gamma_{2}\ldots\gamma_{m}}\cdot K_{1}(s)-\frac{\gamma}{\gamma_{2}\ldots\gamma_{m}}$,
where $K_{1}(s)=\frac{\sinh\left((h_{1}-h_{m+1})\sqrt{s/\nu}\right)}{\sinh(h_{m+1}\sqrt{s/\nu})\cosh(h_{1}\sqrt{s/\nu})}$,
and write
$$\hat{d}_{1,m+1}=\frac{\sigma}{\sigma_{m+1}}\cdot\frac{1}{\gamma_{2}\ldots\gamma_{m}}\left(\frac{\sigma_{1}\gamma}{\gamma_{1}\sigma}-1+1\right)=\frac{\sigma}{\sigma_{m+1}}\cdot\frac{1}{\gamma_{2}\ldots\gamma_{m}}\cdot K_{2}(s)+\frac{\sigma}{\sigma_{m+1}}\cdot\frac{1}{\gamma_{2}\ldots\gamma_{m}},$$ 
where $\sigma=\sinh\left(h_{\min}\sqrt{s/\nu}\right)$, $K_{2}(s)=\frac{\sinh\left((h_{1}-h_{\min})\sqrt{s/\nu}\right)}{\sinh(h_{\min}\sqrt{s/\nu})\cosh(h_{1}\sqrt{s/\nu})}$.
Note that both $K_{1}(s)$ and $K_{2}(s)$ are of the form \eqref{Kernel}.
So by Lemma \ref{KernelPositive}, we obtain the bounds
\[
\int_{0}^{\infty}|d_{1,m}(t)|\, dt\le\lim_{s\rightarrow0+}\frac{\gamma}{\gamma_{2}\ldots\gamma_{m}}\left(K_{1}(s)+1\right)\leq\bigl|\frac{h_{1}-h_{m+1}}{h_{m+1}}\bigr|+1,
\]
and 
\[
\int_{0}^{\infty}|d_{1,m+1}(t)|dt\le\lim_{s\rightarrow0+}\frac{\sigma}{\sigma_{m+1}}\cdot\frac{1}{\gamma_{2}\ldots\gamma_{m}}\left(K_{2}(s)\!+\!1\right)\leq\frac{h_{\min}}{h_{m+1}}\left(\!\frac{h_{1}\!-\!h_{\min}}{h_{\min}}+1\!\right)\!=\!\frac{h_{1}}{h_{m+1}}.
\]
We therefore get
\begin{eqnarray*}
\left\Vert v_{1}^{k}(\cdot)\right\Vert _{L^{\infty}(0,T)} & \leq & \left(1+2(m-2)+\frac{2h_{\max}}{h_{m+1}}\right){\displaystyle \max_{1\leq j\leq m+1}}\parallel v_{j}^{k-1}(\cdot)\parallel_{L^{\infty}(0,T)}\\
 & \leq & \left(2m-3+\frac{2h_{\max}}{h_{m+1}}\right){\displaystyle \max_{1\leq j\leq2m}}\parallel v_{j}^{k-1}(\cdot)\parallel_{L^{\infty}(0,T)}.
\end{eqnarray*}
We also get similar relations for other equations. By induction, we
therefore get for all $i$ 
\begin{eqnarray}\label{bound1}
\left\Vert v_{i}^{k}(\cdot)\right\Vert _{L^{\infty}(0,T)} & \leq & \left(2m-3+\frac{2h_{\max}}{h_{m+1}}\right){\displaystyle \max_{1\leq j\leq2m}}\parallel v_{j}^{k-1}(\cdot)\parallel_{L^{\infty}(0,T)}\nonumber\\
 & \leq & \cdots\leq\left(2m-3+\frac{2h_{\max}}{h_{m+1}}\right)^{k}{\displaystyle \max_{1\leq j\leq2m}}\parallel g_{j}^{0}(\cdot)\parallel_{L^{\infty}(0,T)}.
\end{eqnarray}
Setting $f_{k}(t):=\mathcal{L}^{-1}\left\{\frac{1}{\gamma^{k}}\right\}$
we have
\[
  g_i^{k}(t)=\frac{1}{2^{k}}\left(f_{k}*v_i^{k}\right)(t)=\frac{1}{2^{k}}\int_{0}^{t}f_{k}(t-\tau)v_i^{k}(\tau)d\tau,
\]
from which it follows, using part \ref{L3} of Lemma
\ref{SimpleLaplaceLemma} that
\begin{equation}\label{5.3}
  \| g_i^{k}(\cdot)\|_{L^{\infty}(0,T)}\leq
  \frac{1}{2^{k}}\| v_i^{k}(\cdot)\|_{L^{\infty}(0,T)}\int_{0}^{T}|f_{k}(\tau)|d\tau.
\end{equation}
By Lemma \ref{PositivityLemma}, $f_{k}(t)\geq0$ for all $t$. To obtain a bound for
$\int_0^T f_k(\tau)\,d\tau$, we first show that the function
$r_k(t) = \mathcal{L}^{-1}(2^k e^{-kh_{\min}\sqrt{s/\nu}})$ is greater than or equal to $f_k(t)$ for all $t > 0$,
and then bound $\int_0^T r_k(\tau)\,d\tau$ instead. Indeed, we have
\begin{align*}
\mathcal{L}\left\{r_k(t) - f_k(t)\right\} &=
2^ke^{-kh_{\min}\sqrt{s/\nu}} - \frac{2^k}{(e^{h_{\min}\sqrt{s/\nu}} + e^{-h_{\min}\sqrt{s/\nu}})^k}\\
&= \frac{2^k((1 + e^{-2h_{\min}\sqrt{s/\nu}})^k - 1)}{(e^{h_{\min}\sqrt{s/\nu}}+e^{-h_{\min}\sqrt{s/\nu}})^k}\\
&= \sum_{j=1}^k {k\choose j}e^{-2jh_{\min}\sqrt{s/\nu}}\sech^k(h_{\min}\sqrt{s/\nu}).
\end{align*}
Also, by \cite{Oberhett}
\begin{equation}\label{InversionFormula0}
  \mathcal{L}^{-1}\left( e^{-2jh_{\min}\sqrt{s/\nu}}\right)
    =\frac{jh_{\min}}{\sqrt{\pi\nu t^3}}e^{-j^2h_{\min}^2/\nu t},
\end{equation} is a positive function for $j=1,\ldots, k$. Thus, $\mathcal{L}^{-1}\left(e^{-2jh_{\min}\sqrt{s/\nu}}\sech^k(h_{\min}\sqrt{s/\nu})\right)$ is a convolution of positive functions, and hence positive by part \ref{L1} of Lemma \ref{SimpleLaplaceLemma}. This implies $r_k(t) - f_k(t) \geq 0$, so we deduce that 
\begin{equation}\label{DNerfc}
\int_{0}^{T}f_{k}(\tau)\,d\tau \leq \int_0^T r_k(\tau)\,d \tau
= \mathcal{L}^{-1}\left(\frac{2^{k}e^{-kh_{\min}\sqrt{s/\nu}}}{s}\right) = 2^k\, \textrm{erfc}
  \left(\frac{kh_{\min}}{2\sqrt{\nu T}}\right),
\end{equation}
where we expressed the second integral as an inverse Laplace transform using
Lemma \ref{SimpleLaplaceLemma}, part \ref{L4}, which we then evaluated using
the following identity from \cite{Oberhett}:
\begin{equation}\label{InversionFormula}
  \mathcal{L}^{-1}\left( \frac{1}{s}e^{-\lambda\sqrt{s}}\right)
    =\textrm{erfc}\left(\frac{\lambda}{2\sqrt{t}}\right),
   \quad\lambda > 0.
\end{equation}
Finally, we combine the above bound \eqref{DNerfc} with \eqref{bound1} and \eqref{5.3} to conclude the proof of the theorem.
\begin{remark}\label{Rem3}
For equal subdomains, with subdomain size $h$, we can deduce a better estimate:
\[
{\displaystyle \max_{1\leq i\leq2m}}\parallel g_{i}^{k}\parallel_{L^{\infty}(0,T)}\leq\left(\min\left\{ 2m-1,Q(h,\nu,T)\right\} \right)^{k}{\rm erfc}\left(\frac{kh}{2\sqrt{\nu T}}\right){\displaystyle \max_{1\leq i\leq2m}}\parallel g_{i}^{0}\parallel_{L^{\infty}(0,T)},
\]
where $Q(h,\nu,T):=2\,{\rm erfc}\left(\frac{h}{2\sqrt{\nu T}}\right)+\sum_{i=0}^{\infty}2^{i+1}{\rm erfc}\left(\frac{ih}{2\sqrt{\nu T}}\right)$.

Here is the outline of the proof: For equal-length subdomains we have $h_{i}=h$ for $i=1,\ldots,2m+1$, so that $\gamma_{i}=\gamma,\sigma_{i}=\sigma$ and $\gamma_{i,j}=1,\sigma_{i,j}=0$ for all $i,j$ in the matrix $P$ in \eqref{matrixP}. Therefore, the first update equation \eqref{firsteq} becomes
\begin{equation}
\hat{g}_{1}^{k}=\frac{1}{2\gamma^{2}}\hat{g}_{1}^{k-1}-\frac{\sigma^{2}}{2\gamma^{3}}\hat{g}_{2}^{k-1}-\cdots-\frac{\sigma^{2}}{2\gamma^{m}}\hat{g}_{m-1}^{k-1}-\frac{1}{2\gamma^{m-1}}\hat{g}_{m}^{k-1}+\frac{1}{2\gamma^{m}}\hat{g}_{m+1}^{k-1},\; m\geq2,
\end{equation}
which in turn becomes after setting $\hat{v}_{i}^{k}(s)=2^{k}\gamma^{k}\hat{g}_{i}^{k}(s)$, 
\begin{eqnarray}\label{2ndeq}
\hat{v}_{1}^{k} \!\!& = & \!\!\frac{1}{\gamma}\hat{v}_{1}^{k-1}-\left(1-\frac{1}{\gamma^{2}}\right)\hat{v}_{2}^{k-1}-\cdots-\frac{1}{\gamma^{m-3}}\left(1-\frac{1}{\gamma^{2}}\right)\hat{v}_{m-1}^{k-1}-\frac{1}{\gamma^{m-2}}\hat{v}_{m}^{k-1}+\frac{1}{\gamma^{m-1}}\hat{v}_{m+1}^{k-1}\nonumber \\
 \!\!& =: &\!\! \sum_{i=1}^{m+1}\hat{r}_{i}\hat{v}_{i}^{k-1}.
\end{eqnarray}
Note that the estimate from Theorem \ref{TheorDNmheat} also holds in this case with $h_{\max}=h_{m+1}$:
\begin{equation}\label{DNmestimate1}
{\displaystyle \max_{1\leq i\leq2m}}\parallel g_{i}^{k}\parallel_{L^{\infty}(0,T)}\leq\left(2m-1\right)^{k}{\rm erfc}\left(\frac{kh}{2\sqrt{\nu T}}\right){\displaystyle \max_{1\leq i\leq2m}}\parallel g_{i}^{0}\parallel_{L^{\infty}(0,T)}.
\end{equation}

We now present a different estimate starting from \eqref{2ndeq}.
By back transforming into the time domain we obtain $v_{1}^{k}(t)=\sum_{i=1}^{m+1}\left(r_{i}*v_{i}^{k-1}\right)(t).$
Now part \ref{L3} of Lemma \ref{SimpleLaplaceLemma} yields 
\begin{eqnarray}\label{3rdeq}
\|v_{1}^{k}\|_{L^{\infty}(0,T)}&\leq &\sum_{i=1}^{m+1}\|v_{i}^{k-1}\|_{L^{\infty}(0,T)}\int_{0}^{T}\!|r_{i}(\tau)|d\tau\nonumber\\
&\leq &{\displaystyle \max_{1\leq i\leq m+1}}\parallel v_{i}^{k-1}\parallel_{L^{\infty}(0,T)}\sum_{i=1}^{m+1}\int_{0}^{T}\!|r_{i}(\tau)|d\tau.
\end{eqnarray}
Set $f_{k}(t):=\mathcal{L}^{-1}\left(\frac{1}{\gamma^{k}}\right)$. Since 
$\lim_{s\rightarrow0+}\left|1-\frac{1}{\gamma^{2}}\right|\leq2$,
we have from \eqref{3rdeq}
\[
\sum_{i=1}^{m+1}\int_{0}^{T}|r_{i}(\tau)|d\tau\leq\int_{0}^{T}\left(f_{1}(t)+2+2f_{1}(t)+\cdots+2f_{m-3}(t)+f_{m-2}(t)+f_{m-1}(t)\right)dt.
\]
Therefore using the inequality \eqref{DNerfc},  
$\int_{0}^{T}f_{k}(t)dt\leq2^{k}{\rm erfc}\left(\frac{kh}{2\sqrt{\nu T}}\right)$, the above expression is bounded by
\[
\sum_{i=1}^{m+1}\int_{0}^{T}|r_{i}(\tau)|d\tau\leq2\,{\rm erfc}\left(\frac{h}{2\sqrt{\nu T}}\right)+\sum_{i=0}^{m-1}2^{i+1}{\rm erfc}\left(\frac{ih}{2\sqrt{\nu T}}\right)\leq Q(h,\nu,T),
\]
where $Q(h,\nu,T):=2\,{\rm erfc}\left(\frac{h}{2\sqrt{\nu T}}\right)+\sum_{i=0}^{\infty}2^{i+1}{\rm erfc}\left(\frac{ih}{2\sqrt{\nu T}}\right)$.
So we get the second estimate as 
\begin{equation}\label{DNmestimate2}
{\displaystyle \max_{1\leq i\leq2m}}\parallel g_{i}^{k}\parallel_{L^{\infty}(0,T)}\leq Q^{k}\,{\rm erfc}\left(\frac{kh}{2\sqrt{\nu T}}\right){\displaystyle \max_{1\leq i\leq2m}}\parallel g_{i}^{0}\parallel_{L^{\infty}(0,T)}.
\end{equation}
The result follows combining the two estimates \eqref{DNmestimate1}
and \eqref{DNmestimate2}. 

We compare the two estimates \eqref{DNmestimate1} and \eqref{DNmestimate2}
in Figure \ref{FigregionDNm} for $\nu=1$. The region below the red curve is where the estimate \eqref{DNmestimate1} is more accurate than \eqref{DNmestimate2}.
\begin{figure}
\centering
\includegraphics[width=8.5cm,height=6cm]{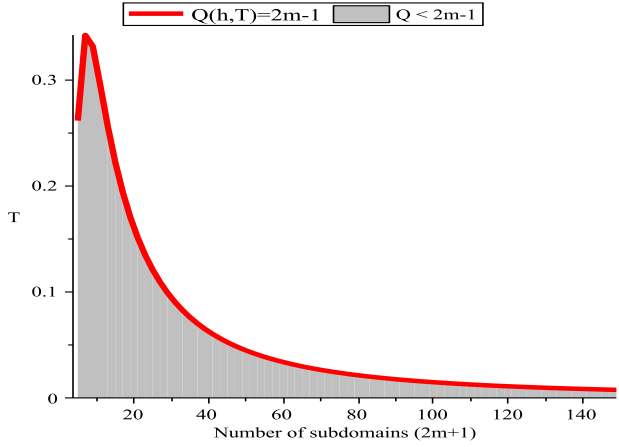}
\caption{Comparison of the two estimates (\ref{DNmestimate1}) and (\ref{DNmestimate2}) for $\nu=1$}
\label{FigregionDNm}
\end{figure}
\end{remark}

\subsection{Proof of Theorem \ref{DNwavemulti}}

We start by applying the Laplace transform to the homogeneous Dirichlet subproblem in \eqref{DNmwaveA3-1} to get 
\[
s^{2}\hat{u}_{m+1}^{k}-c^{2}\hat{u}_{m+1,xx}^{k}=0,\quad\hat{u}_{m+1}^{k}(x_{m},s)=\hat{g}_{m}^{k-1}(s),\quad\hat{u}_{m+1}^{k}(x_{m+1},s)=\hat{g}_{m+1}^{k-1}(s),
\]
Define $\rho_{i}:=\sinh\left(h_{i}s/c\right)$ and $\lambda_{i}:=\cosh\left(h_{i}s/c\right)$.
Then the subproblem \eqref{DNmwaveA3-1} solution becomes 
\[
\hat{u}_{m+1}^{k}(x,s)=\frac{1}{\rho_{m+1}}\left(\hat{g}_{m+1}^{k-1}(s)\sinh\left((x-x_{m})s/c\right)+\hat{g}_{m}^{k-1}(s)\sinh\left((x_{m+1}-x)s/c\right)\right).
\]
The solutions of the subproblems \eqref{DNmwaveA3-2} in Laplace
space are
\[
\arraycolsep0.01em\begin{array}{rcl}
\hat{u}_{i}^{k}(x,s) & = & \frac{1}{\lambda_{i}}\frac{\hat{w}_{i}^{k}}{s/c}\sinh((x-x_{i-1})s/c)+\frac{1}{\lambda_{i}}\hat{g}_{i-1}^{k-1}\cosh((x_{i}-x)s/c),\;1\leq i\leq m,\\
\hat{u}_{j}^{k}(x,s) & = & \frac{1}{\lambda_{j}}\hat{g}_{j}^{k-1}\cosh((x-x_{j-1})s/c)+\frac{1}{\lambda_{j}}\frac{\hat{w}_{j-1}^{k}}{s/c}\sinh((x_{j}-x)s/c),\;m+2\leq j\leq2m+1.
\end{array}
\]
Therefore for $\theta=1/2$ the update conditions \eqref{DNmwaveA3-3}
become
\begin{equation}
\arraycolsep0.1em\begin{array}{rcl}
\hat{g}_{i}^{k} & = & \frac{1}{2\lambda_{i}}\hat{g}_{i-1}^{k-1}+\frac{1}{2}\hat{g}_{i}^{k-1}+\frac{\rho_{i}}{2\lambda_{i}}\frac{\hat{w}_{i}^{k}}{s/c},\;1\leq i\leq m,\\
\hat{g}_{j}^{k} & = & \frac{\rho_{j+1}}{2\lambda_{j+1}}\frac{\hat{w}_{j}^{k}}{s/c}+\frac{1}{2}\hat{g}_{j}^{k-1}+\frac{1}{2\lambda_{j+1}}\hat{g}_{j+1}^{k-1},\;m+1\leq j\leq2m.
\end{array}\label{DNmupdate2}
\end{equation}
and
\begin{equation}
\arraycolsep0.08em\begin{array}{rcl}
\hat{w}_{i}^{k} & = & -\frac{s}{c}\frac{\rho_{i+1}}{\lambda_{i+1}}\hat{g}_{i}^{k-1}+\frac{1}{\lambda_{i+1}}\hat{w}_{i+1}^{k},1\leq i\leq m-1;\;\hat{w}_{m}^{k}=-\frac{s}{c}\frac{\lambda_{m+1}}{\rho_{m+1}}\hat{g}_{m}^{k-1}+\frac{s/c}{\rho_{m+1}}\hat{g}_{m+1}^{k-1},\\
\hat{w}_{m+1}^{k} & = & \frac{s/c}{\rho_{m+1}}\hat{g}_{m}^{k-1}-\frac{s}{c}\frac{\lambda_{m+1}}{\rho_{m+1}}\hat{g}_{m+1}^{k-1};\quad\hat{w}_{j}^{k}=\frac{1}{\lambda_{j}}\hat{w}_{j-1}^{k}-\frac{s}{c}\frac{\rho_{j}}{\lambda_{j}}\hat{g}_{j}^{k-1},\;m+2\leq j\leq2m,
\end{array}\label{DNmupdate1}
\end{equation}
We choose $\bar{g}_{i}^{k}:=\lambda_{i}\hat{g}_{i}^{k},\bar{w}_{i}^{k}:=\frac{\hat{w}_{i}^{k}}{s/c}\rho_{i},1\leq i\leq m$
and $\bar{g}_{j}^{k}:=\lambda_{j+1}\hat{g}_{j}^{k},\bar{w}_{j}^{k}:=\frac{\hat{w}_{j}^{k}}{s/c}\rho_{j+1},m+1\leq j\leq2m$
with $\lambda_{0}=\lambda_{2m+2}=1$ in the corresponding equations
of \eqref{DNmupdate2}-\eqref{DNmupdate1} to get
\[
\arraycolsep0.1em
\begin{array}{rcl}
{g}_{i}^{k}&=&\frac{1}{2\lambda_{i-1}}\bar{g}_{i-1}^{k-1}+\frac{1}{2}\bar{g}_{i}^{k-1}+\frac{1}{2}\bar{w}_{i}^{k},\;1\leq i\leq m;\\
\bar{g}_{j}^{k}&=&\frac{1}{2}\bar{w}_{j}^{k}+\frac{1}{2}\bar{g}_{j}^{k-1}+\frac{1}{2\lambda_{j+2}}\bar{g}_{j+1}^{k-1},\;m+1\leq j\leq2m,
\end{array}
\]
and for $1\leq i\leq m-1$ and $m+2\leq j\leq2m$,
\[
\arraycolsep0.08em\begin{array}{rcl}
\bar{w}_{i}^{k} & = & -\frac{\rho_{i}\rho_{i+1}}{\lambda_{i}\lambda_{i+1}}\bar{g}_{i}^{k-1}+\frac{\rho_{i}}{\rho_{i+1}\lambda_{i+1}}\bar{w}_{i+1}^{k};\;\bar{w}_{m}^{k}=-\frac{\rho_{m}\lambda_{m+1}}{\lambda_{m}\rho_{m+1}}\bar{g}_{m}^{k-1}+\frac{\rho_{m}}{\rho_{m+1}\lambda_{m+2}}\bar{g}_{m+1}^{k-1},\\
\bar{w}_{m+1}^{k} & = & \frac{\rho_{m+2}}{\lambda_{m}\rho_{m+1}}\bar{g}_{m}^{k-1}-\frac{\lambda_{m+1}\rho_{m+2}}{\rho_{m+1}\lambda_{m+2}}\bar{g}_{m+1}^{k-1};\quad\bar{w}_{j}^{k}=\frac{\rho_{j+1}}{\lambda_{j}\rho_{j}}\bar{w}_{j-1}^{k}-\frac{\rho_{j}\rho_{j+1}}{\lambda_{j}\lambda_{j+1}}\bar{g}_{j}^{k-1}.
\end{array}
\]
Therefore we can write the system in matrix form as in the Heat equation case 
\[
\begin{pmatrix}\bar{g}_{1}^{k}\\
\vdots\\
\bar{g}_{m}^{k}\\
\bar{g}_{m+1}^{k}\\
\vdots\\
\bar{g}_{2m}^{k}
\end{pmatrix}=P\begin{pmatrix}\bar{g}_{1}^{k-1}\\
\vdots\\
\bar{g}_{m}^{k-1}\\
\bar{g}_{m+1}^{k-1}\\
\vdots\\
\bar{g}_{2m}^{k-1}
\end{pmatrix},
\]
where $P$ is in the same form as in \eqref{matrixP} with $\gamma_{i,j}$ and $\sigma_{i,j}$ replaced by $\lambda_{i,j}:=\cosh\left((h_{i}-h_{j})s/c\right),\rho_{i,j}:=\sinh\left((h_{i}-h_{j})s/c\right)$ respectively.
Therefore the updating conditions become
\begin{equation}
\hat{g}_{i}^{k}(s)={\displaystyle \sum_{l=i-1}^{m+1}}\hat{k}_{i,l}\hat{g}_{l}^{k-1}(s),\:1\leq i\leq m;\quad\hat{g}_{j}^{k}(s)={\displaystyle \sum_{l=m}^{j+1}}\hat{k}_{j,l}\hat{g}_{l}^{k-1}(s),\:m+1\leq j\leq2m,\label{DNmwaveupdate}
\end{equation}
where $\hat{k}_{1,0}=\hat{k}_{2m,2m+1}=0$, and for $i+1\leq l<m$
$\hat{k}_{i,i-1}=\frac{1}{2\lambda_{i}},\hat{k}_{i,i}=\frac{\lambda_{i,i+1}}{2\lambda_{i}\lambda_{i+1}},\hat{k}_{i,l}=-\frac{\rho_{i}\rho_{l+1}}{2\lambda_{i}\lambda_{i+1}\ldots\lambda_{l+1}},$
$\hat{k}_{i,m}=-\frac{\rho_{i}\lambda_{m+1}}{2\lambda_{i}\ldots\lambda_{m}\rho_{m+1}},\hat{k}_{i,m+1}=\frac{\rho_{i}}{2\lambda_{i}\ldots\lambda_{m}\rho_{m+1}}$
for $1\leq i<m,$ and for $m+1<l\leq j-1$ $\hat{k}_{j,j}=\frac{\lambda_{j,j+1}}{2\lambda_{j}\lambda_{j+1}},\hat{k}_{j,l}=-\frac{\rho_{j+1}\rho_{l}}{2\lambda_{l}\lambda_{l+1}\ldots\lambda_{j+1}},\hat{k}_{j,j+1}=\frac{1}{2\lambda_{j+1}},$
$\hat{k}_{j,m+1}=-\frac{\rho_{j+1}\lambda_{m+1}}{2\rho_{m+1}\lambda_{m+2}\ldots\lambda_{j+1}},\hat{k}_{j,m}=\frac{\rho_{j+1}}{2\rho_{m+1}\lambda_{m+2}\ldots\lambda_{j+1}}$
for $m+1<j\leq2m.$ Also, $\hat{k}_{m,m-1}=\frac{1}{2\lambda_{m}},\hat{k}_{m,m}=\frac{\rho_{m+1,m}}{2\rho_{m+1}\lambda_{m}},\hat{k}_{m,m+1}=\frac{\rho_{m}}{2\rho_{m+1}\lambda_{m}}$
and $\hat{k}_{m+1,m}=\frac{\rho_{m+2}}{2\rho_{m+1}\lambda_{m+2}},\hat{k}_{m+1,m+1}=\frac{\rho_{m+1,m+2}}{2\rho_{m+1}\lambda_{m+2}},\hat{k}_{m+1,m+2}=\frac{1}{2\lambda_{m+2}}.$
So by induction on \eqref{DNmwaveupdate} we can write for $1\leq i\leq2m$
\begin{equation}
\hat{g}_{i}^{k}(s)=\sum_{j=1}^{2m}p_{i,j}^{n}\left(\hat{k}_{1,1},\hat{k}_{1,2},\ldots,\hat{k}_{2m,2m-1},\hat{k}_{2m,2m}\right)\,\hat{g}_{j}^{k-n}(s),\label{DNminduct1}
\end{equation}
where the coefficients $p_{i,j}^{n}$ are either zero or homogeneous
polynomials of degree $n$. Now expanding hyperbolic functions into
infinite binomial series, we obtain for $i+1\leq l<m$ and $1\leq i<m$
\begin{multline*}
\hat{k}_{i,i}=\frac{\cosh\left((h_{i}-h_{i+1})s/c\right)}{2\cosh(h_{i}s/c)\cosh(h_{i+1}s/c)}=\left(e^{-2h_{i}s/c}+e^{-2h_{i+1}s/c}\right)\left[1+{\displaystyle \sum_{l=1}^{\infty}}(-1)^{l}e^{-2h_{i}ls/c}\right.\\
\left.+{\displaystyle \sum_{n=1}^{\infty}}(-1)^{n}e^{-2h_{i+1}ns/c}+{\displaystyle \sum_{l=1}^{\infty}}{\displaystyle \sum_{n=1}^{\infty}}(-1)^{l+n}e^{-2(lh_{i}+nh_{i+1})s/c}\right],
\end{multline*}
\begin{multline*}
\hat{k}_{i,l}=-\frac{\sinh\left(h_{i}s/c\right)\sinh\left(h_{l+1}s/c\right)}{2\cosh(h_{i}s/c)\cosh(h_{i+1}s/c)\ldots\cosh(h_{l+1}s/c)}=-2^{l-i-1}e^{-\left(h_{i+1}+\cdots+h_{l}\right)s/c}\left(1-\right.\\
\left.e^{-2h_{i}s/c}-e^{-2h_{l+1}s/c}+e^{-2(h_{i}+h_{l+1})s/c}\right){\displaystyle \prod_{n=i}^{l+1}}\left(1+e^{-2h_{n}s/c}\right)^{-1},
\end{multline*}
 
\[
\hat{k}_{i,i-1}=\frac{1}{2\cosh(h_{i}s/c)}=e^{-h_{i}s/c}\left[1+{\displaystyle \sum_{l=1}^{\infty}}(-1)^{l}e^{-2h_{i}ls/c}\right],
\]
\begin{multline*}
\hat{k}_{i,m}=-\frac{\sinh\left(h_{i}s/c\right)\cosh\left(h_{m+1}s/c\right)}{2\cosh(h_{i}s/c)\ldots\cosh(h_{m}s/c)\sinh(h_{m+1}s/c)}\\
=-2^{m-i-1}e^{-\left(h_{i+1}+\cdots+h_{m}\right)s/c}\left(1-e^{-2h_{i}s/c}
+ e^{-2h_{m+1}s/c}\right.\\
\left.- e^{-2(h_{i}+h_{m+1})s/c}\right) 
\left(1-e^{-2h_{m+1}s/c}\right)^{-1}{\displaystyle \prod_{l=i}^{m}}\left(1+e^{-2h_{l}s/c}\right)^{-1},
\end{multline*}
\begin{multline*}
\hat{k}_{i,m+1}=\frac{\sinh\left(h_{i}s/c\right)}{2\cosh(h_{i}s/c)\ldots\cosh(h_{m}s/c)\sinh(h_{m+1}s/c)}\\
=2^{m-i}e^{-\left(h_{i+1}+\cdots+h_{m+1}\right)s/c}\left(1-e^{-2h_{i}s/c}\right)
\left(1-e^{-2h_{m+1}s/c}\right)^{-1}{\displaystyle \prod_{l=i}^{m}}\left(1+e^{-2h_{l}s/c}\right)^{-1},
\end{multline*}
\begin{multline*}
\hat{k}_{m,m}=\frac{\sinh\left((h_{m+1}-h_{m})s/c\right)}{2\cosh(h_{m}s/c)\sinh(h_{m+1}s/c)}=\left(e^{-2h_{m}s/c}-e^{-2h_{m+1}s/c}\right)\left[1\right.\\
\left.+ {\displaystyle \sum_{l=1}^{\infty}}(-1)^{l}e^{-2h_{m}ls/c}
+{\displaystyle \sum_{n=1}^{\infty}}e^{-2h_{m+1}ns/c}+{\displaystyle \sum_{l=1}^{\infty}}{\displaystyle \sum_{n=1}^{\infty}}(-1)^{l}e^{-2(lh_{m}+nh_{m+1})s/c}\right],
\end{multline*}
\begin{multline*}
\hat{k}_{m,m+1}=\frac{\sinh\left(h_{m}s/c\right)}{2\cosh(h_{m}s/c)\sinh(h_{m+1}s/c)}=\left(e^{-h_{m+1}s/c}-e^{-(2h_{m}+h_{m+1})s/c}\right)\left[1\right.\\
\left.+{\displaystyle \sum_{l=1}^{\infty}}(-1)^{l}e^{-2h_{m}ls/c}+{\displaystyle \sum_{n=1}^{\infty}}e^{-2h_{m+1}ns/c}+{\displaystyle \sum_{l=1}^{\infty}}{\displaystyle \sum_{n=1}^{\infty}}(-1)^{l}e^{-2(lh_{m}+nh_{m+1})s/c}\right].
\end{multline*}
The argument also holds similarly for other terms. Now using these
expressions we can write \eqref{DNminduct1} as 
\begin{equation}
\hat{g}_{i}^{k}(s)=\sum_{j=1}^{2m}\hat{r}_{i,j}^{k}(s)\,\hat{g}_{j}^{0}(s),\label{DNmfinalind}
\end{equation}
where $\hat{r}_{i,j}^{k}(s)$ are linear combinations of terms of the form
$e^{-sz}$ with $z\geq kh_{l}/c$ for some $l\in\left\{ 1,2,\ldots2m+1\right\} $.
We now recall the shifting property of Laplace transform: 
\begin{equation}
\mathcal{L}^{-1}\left\{ e^{-\alpha s}\hat{f}(s)\right\} =H(t-\alpha)f(t-\alpha),\label{Lformula}
\end{equation}
where $H(t):=\begin{cases}
1, & t>0,\\
0, & t\leq0.
\end{cases}$ is the Heaviside step function. We use \eqref{Lformula} to back
transform \eqref{DNmfinalind} and obtain
\[
g_{i}^{k}(t)=g_{j}^{0}\left(t-\frac{kh_{l}}{c}\right)H\left(t-\frac{kh_{l}}{c}\right)+\:\textrm{other terms},
\]
for some $j\in\left\{ 1,2,\ldots2m\right\} $ and $l\in\left\{ 1,2,\ldots2m+1\right\} $.
Thus for $T\leq kh_{\min}/c$, we get $g_{i}^{k}(t)=0$ for all $i$,
and the conclusion follows.

\begin{remark}\label{Rem4}
 The shifting property of Laplace transform \eqref{Lformula} is the reason behind the finite step convergence of the DNWR for $\theta=1/2$. 
The right hand side of \eqref{Lformula} becomes identically zero for $t\leq\alpha$, so that for sufficiently small time window length $T$ (e.g., $T\leq\alpha$) the
error becomes zero and leads to convergence in the next iteration.
In Figure \ref{FigDNkernel5} we plot $\mathcal{L}^{-1}\left\{ \hat{f}(s)\right\} $
with $f(t)=\sin(t)$ on the left, and show the effect of time-shifting
on the right.
\begin{figure}
\centering
\includegraphics[width=6cm,height=4cm]{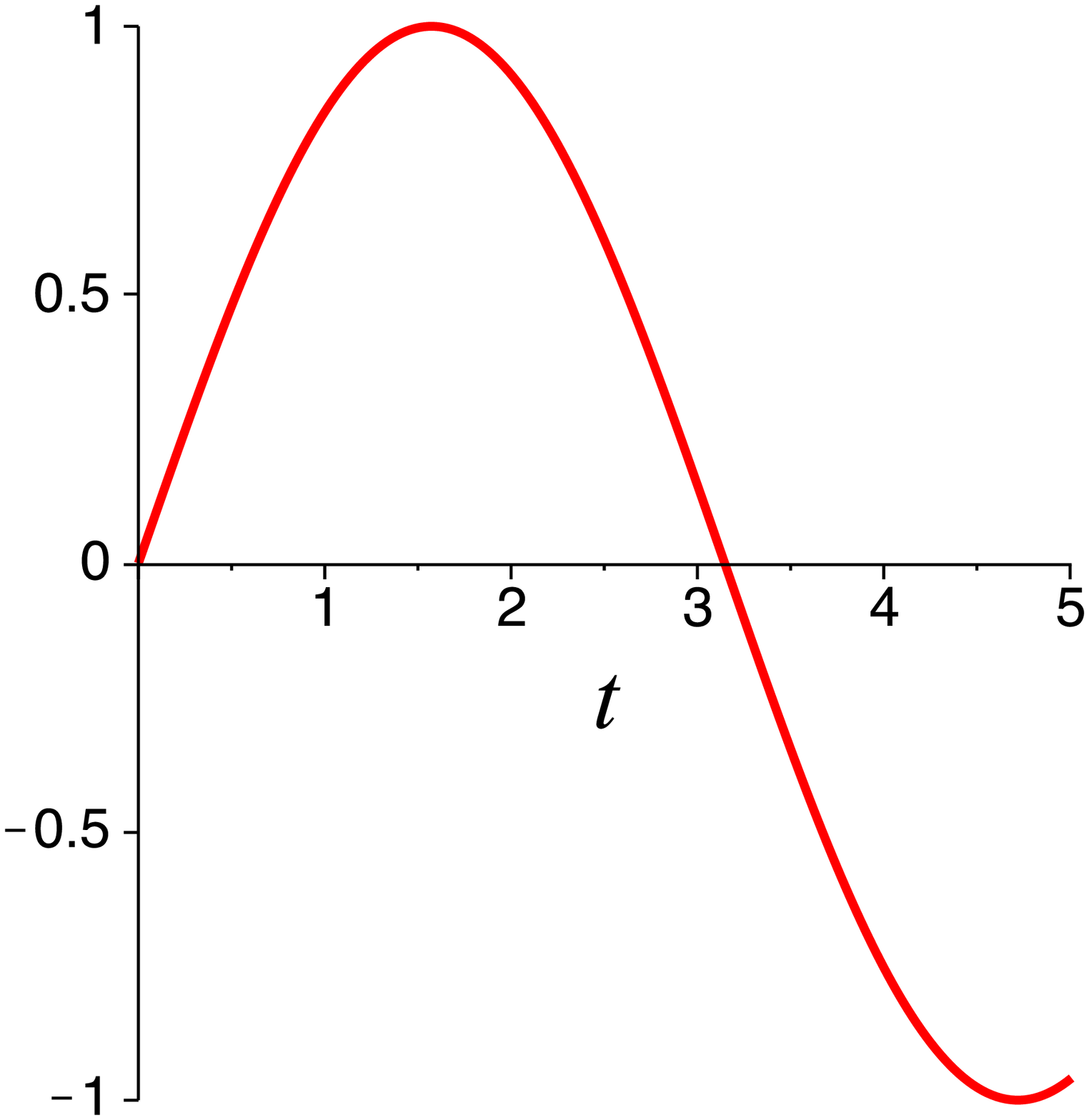}
\includegraphics[width=6cm,height=4cm]{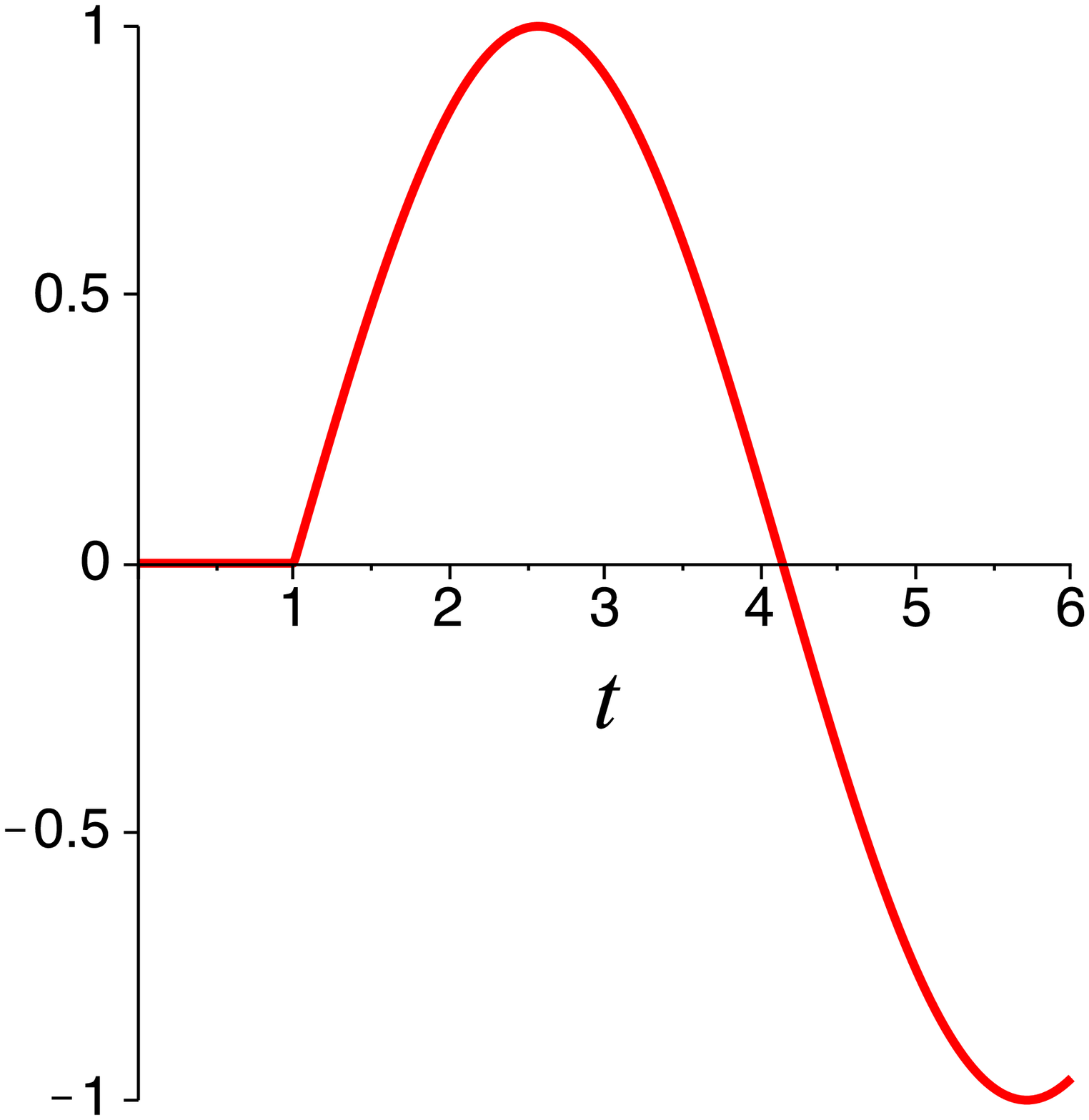}
\caption{Example of time-shifting for the function $f(t)=\sin(t)$: $\mathcal{L}^{-1}\left\{ \hat{f}(s)\right\}$ on the left, and $\mathcal{L}^{-1}\left\{ e^{-s}\hat{f}(s)\right\}$ on the right}
\label{FigDNkernel5}
\end{figure}
\end{remark}

\section{Analysis of DNWR algorithm for Wave equation in 2D}\label{Section6}

We now formulate and analyze the DNWR algorithm for the two-dimensional
wave equation 
\[
\partial_{tt}u-c^{2}\Delta u=f(x,y,t),\quad(x,y)\in\Omega=(l,L)\times(0,\pi),\:t\in(0,T]
\]
with initial condition $u(x,y,0)=v_{0}(x,y),\partial_{t}u(x,y,0)=w_{0}(x,y)$
and Dirichlet boundary conditions $u(x,y,t)=g(x,y,t), (x,y)\in \partial \Omega$. To define the DNWR algorithm,
we decompose $\Omega$ into strips of the form $\Omega_{i}=(x_{i-1},x_{i})\times(0,\pi)$,
$l=x_{0}<x_{1}<\cdots<x_{2m+1}=L$. We define the subdomain width $h_{i}:=x_{i}-x_{i-1}$,
and $h_{\min}:=\min_{1\leq i\leq 2m+1}h_{i}$. Also we directly consider
the error equations with $f(x,y,t)=0,v_{0}(x,y)=0=w_{0}(x,y)$ and
homogeneous Dirichlet boundary conditions. Given initial guesses $\left\{ g_{i}^{0}(y,t)\right\} _{i=1}^{2m}$
along the interface $\left\{ x=x_{i}\right\}$, the DNWR algorithm,
as a particular case of \eqref{DNmwave1}-\eqref{DNmwave2}-\eqref{DNmwave3},
is given by performing iteratively for $k=1,2,\ldots$
\begin{equation}
\begin{array}{rcll}
\partial_{tt}u_{m+1}^{k}-c^{2}\Delta u_{m+1}^{k} & = & 0, & \textrm{in}\;\Omega_{m+1},\\
u_{m+1}^{k}(x,y,0) & = & 0, & \textrm{in}\;\Omega_{m+1},\\
\partial_{t}u_{m+1}^{k}(x,y,0) & = & 0, & \textrm{in}\;\Omega_{m+1},\\
u_{m+1}^{k}(x_{m},y,t) & = & g_{m}^{k-1}(y,t), &\\
u_{m+1}^{k}(x_{m+1},y,t) & = & g_{m+1}^{k-1}(y,t), & \\
u_{m+1}^{k}(x,0,t) & = & u_{m+1}^{k}(x,\pi,t)=0,&
\end{array}\label{DN2dwave1}
\end{equation}
and then denoting by $\left\{ w_{i}^{k}(y,t)\right\} _{i=1}^{2m}$ the Neumann traces along the interfaces, calculate for $m\geq i\geq1$ and $m+2\leq j\leq2m+1$
\begin{equation}
\arraycolsep0.08em\begin{array}{rcll}
\partial_{tt}u_{i}^{k} -c^{2}\Delta u_{i}^{k}& = & 0, & \textrm{in}\;\Omega_{i},\\
u_{i}^{k}(x,y,0) & = & 0, & \textrm{in}\;\Omega_{i},\\
\partial_{t}u_{i}^{k}(x,y,0) & = & 0, & \textrm{in}\;\Omega_{i},\\
u_{i}^{k}(x_{i-1},y,t) & = & g_{i-1}^{k-1}(y,t), & \\
\partial_{x}u_{i}^{k}(x_{i},y,t) & = & w_{i}^{k}(y,t), & \\
u_{i}^{k}(x,0,t) & = & u_{i}^{k}(x,\pi,t)=0,&
\end{array}\;\begin{array}{rcll}
\partial_{tt}u_{j}^{k} -c^{2}\Delta u_{j}^{k}& = & 0, & \textrm{in}\;\Omega_{j},\\
u_{j}^{k}(x,y,0) & = & 0, & \textrm{in}\;\Omega_{j},\\
\partial_{t}u_{j}^{k}(x,y,0) & = & 0, & \textrm{in}\;\Omega_{j},\\
-\partial_{x}u_{j}^{k}(x_{j-1},y,t) & = & w_{j-1}^{k}(y,t), & \\
u_{j}^{k}(x_{j},y,t) & = & g_{j}^{k-1}(y,t), &\\
u_{j}^{k}(x,0,t) & = & u_{j}^{k}(x,\pi,t)=0,& 
\end{array}\label{DN2dwave2}
\end{equation}
with the update conditions for $1\leq i\leq m$ and $m+1\leq j\leq2m$
\begin{equation}
\arraycolsep0.1em\begin{array}{rclrcl}
g_{i}^{k}(y,t) & = & \theta u_{i}^{k}(x_{i},y,t)+(1-\theta)g_{i}^{k-1}(y,t), & w_{i}^{k}(y,t) & = & \partial_{x}u_{i+1}^{k}(x_{i},y,t),\\
g_{j}^{k}(y,t) & = & \theta u_{j+1}^{k}(x_{j},y,t)+(1-\theta)g_{j}^{k-1}(y,t), & w_{j}^{k}(y,t) & = & -\partial_{x}u_{j}^{k}(x_{j},y,t),
\end{array}\label{DN2dwave3}
\end{equation}
where $\theta\in(0,1]$.

We perform a Fourier transform along the $y$ direction to reduce
the original problem into a collection of one-dimensional problems.
Using a Fourier sine series along the $y$-direction, we get 
\[
u_{i}^{k}(x,y,t)=\sum_{n\geq1}U_{i}^{k}(x,n,t)\sin(ny)
\]
where 
\[
U_{i}^{k}(x,n,t)=\frac{2}{\pi}\int_{0}^{\pi}u_{i}^{k}(x,\eta,t)\sin(n\eta)d\eta.
\]
The equation \eqref{DN2dwave1} therefore becomes a sequence
of 1D equations for each $n$,
\begin{equation}
\frac{\partial^{2}U_{m+1}^{k}}{\partial t^{2}}(x,n,t)-c^{2}\frac{\partial^{2}U_{m+1}^{k}}{\partial x^{2}}(x,n,t)+c^{2}n^{2}U_{m+1}^{k}(x,n,t)=0,\label{DNwave2dlap}
\end{equation}
with the boundary conditions for $U_{m+1}^{k}(x,n,t)$. We now define
\begin{equation}
\chi(\alpha,\beta,t):=\mathcal{L}^{-1}\left\{ \exp\left(-\beta\sqrt{s^{2}+\alpha^{2}}\right)\right\} ,\quad\textrm{Re}(s)>0,\label{Xiab}
\end{equation}
with $s$ being the Laplace variable. Before presenting the main convergence
theorem, we prove the following auxiliary result.
\begin{lemma}\label{Lemma28} We have the identity: 
\[
\chi(\alpha,\beta,t)=\begin{cases}
\delta(t-\beta)-\frac{\alpha\beta}{\sqrt{t^{2}-\beta^{2}}}\,J_{1}\left(\alpha\sqrt{t^{2}-\beta^{2}}\right), & t\geq\beta,\\
0, & 0<t<\beta,
\end{cases}
\]
where $\delta$ is the dirac delta function and $J_{1}$ is the Bessel
function of first order given by 
\[
J_{1}(z)=\frac{1}{\pi}\int_{0}^{\pi}\cos\left(z\sin\varphi-\varphi\right)d\varphi.
\]
 \end{lemma}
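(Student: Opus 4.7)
The plan is to derive the identity by differentiating (with respect to $\beta$) a known Laplace transform pair from the tables of Oberhettinger, namely
\[
\mathcal{L}\left\{H(t-\beta)\,J_{0}\bigl(\alpha\sqrt{t^{2}-\beta^{2}}\bigr)\right\}(s)=\frac{\exp\bigl(-\beta\sqrt{s^{2}+\alpha^{2}}\bigr)}{\sqrt{s^{2}+\alpha^{2}}},
\]
which can be cited directly (it is the standard transform associated with the Klein--Gordon propagator, and it dovetails with the integral representation of $J_{1}$ already displayed in the statement of the lemma). The key observation is that applying $-\partial_{\beta}$ to the right-hand side above multiplies the $\beta$-exponential by $+\sqrt{s^{2}+\alpha^{2}}$, which cancels the denominator and produces precisely $\exp(-\beta\sqrt{s^{2}+\alpha^{2}})$. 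Hence it suffices to compute $-\partial_{\beta}$ of the time-domain side in the distributional sense.

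First I would verify that differentiation in $\beta$ commutes with the Laplace transform on the strip $\mathrm{Re}(s)>0$: both the integrand and its $\beta$-derivative are dominated by integrable majorants uniformly for $\beta$ in a neighbourhood of any fixed $\beta_{0}>0$, which justifies differentiation under the integral sign and the distributional derivative on the inversion side.

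Second, I would compute $-\partial_{\beta}\bigl[H(t-\beta)\,J_{0}(\alpha\sqrt{t^{2}-\beta^{2}})\bigr]$ by the product rule. The contribution from the Heaviside is $\delta(t-\beta)\cdot J_{0}(\alpha\sqrt{t^{2}-\beta^{2}})$, and since $J_{0}$ is continuous with $J_{0}(0)=1$, the correct distributional weight at the jump is simply $\delta(t-\beta)$. For the Bessel factor, using $J_{0}'=-J_{1}$ together with $\partial_{\beta}\sqrt{t^{2}-\beta^{2}}=-\beta/\sqrt{t^{2}-\beta^{2}}$ yields
\[
-H(t-\beta)\,\partial_{\beta}J_{0}\bigl(\alpha\sqrt{t^{2}-\beta^{2}}\bigr)=-H(t-\beta)\,\frac{\alpha\beta}{\sqrt{t^{2}-\beta^{2}}}\,J_{1}\bigl(\alpha\sqrt{t^{2}-\beta^{2}}\bigr).
\]
Adding the two contributions gives exactly the stated piecewise formula, with the vanishing on $0<t<\beta$ coming from the factor $H(t-\beta)$ in the starting transform pair (the causality of the Klein--Gordon fundamental solution).

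The main obstacle I anticipate is the rigorous handling of the distributional derivative at the jump $t=\beta$: one must show that the singular term $\delta(t-\beta)J_{0}(\alpha\sqrt{t^{2}-\beta^{2}})$ reduces to $\delta(t-\beta)$ without picking up a spurious contribution from the non-smooth factor $\sqrt{t^{2}-\beta^{2}}$ (which is only H\"older continuous at $t=\beta$, but which multiplies a smooth even function so that the product is $C^{\infty}$ near $t=\beta$ when viewed as a function of $t^{2}-\beta^{2}$). This is best handled by pairing against a test function $\phi(t)\in C_{c}^{\infty}(0,\infty)$ and verifying the identity after integration. As an independent sanity check, one could alternatively invert $\exp(-\beta\sqrt{s^{2}+\alpha^{2}})$ directly via a Bromwich contour deformed around the branch cuts at $s=\pm i\alpha$ and recognize the resulting integral as the Bessel integral representation of $J_{1}$ given in the lemma statement; this route is longer but gives the delta contribution transparently from the pole-free part of the contour at infinity.
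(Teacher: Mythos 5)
Your proof is correct, but it takes a genuinely different route from the paper's. The paper simply writes $e^{-\beta\sqrt{s^{2}+\alpha^{2}}}=e^{-\beta s}-\bigl(e^{-\beta s}-e^{-\beta\sqrt{s^{2}+\alpha^{2}}}\bigr)$ and quotes two entries from Schiff's tables: $\mathcal{L}^{-1}\{e^{-\beta s}\}=\delta(t-\beta)$ and the inverse transform of the bracketed difference, which is exactly the $J_{1}$ term supported on $t>\beta$; subtracting the two gives the lemma in three lines. You instead start from the single (and more widely tabulated) pair $\mathcal{L}\{H(t-\beta)J_{0}(\alpha\sqrt{t^{2}-\beta^{2}})\}=e^{-\beta\sqrt{s^{2}+\alpha^{2}}}/\sqrt{s^{2}+\alpha^{2}}$ and apply $-\partial_{\beta}$ to both sides. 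Your parameter differentiation is carried out correctly: $-\partial_{\beta}$ of the right-hand side cancels the denominator, $J_{0}'=-J_{1}$ together with $\partial_{\beta}\sqrt{t^{2}-\beta^{2}}=-\beta/\sqrt{t^{2}-\beta^{2}}$ produces the $J_{1}$ tail with the right sign, and $J_{0}(0)=1$ gives the clean weight on $\delta(t-\beta)$; the points you flag for rigor (differentiation under the transform, the distributional product rule at the jump) are exactly the right ones, and your pairing-against-a-test-function remedy is adequate. What your route buys is that it \emph{derives} the nontrivial table entry the paper merely cites, from a more elementary and better-known starting point, at the cost of a distributional argument; what the paper's route buys is brevity, since both inverse transforms are looked up directly. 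Either way, the causal support ($\chi=0$ for $0<t<\beta$) and the decomposition into a delta plus a $J_{1}$ tail come out identically.
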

\begin{proof}
Using the change of variable $r=\sqrt{s^{2}+\alpha^{2}}$ we write
\[
e^{-\beta r}=e^{-\beta s}-(e^{-\beta s}-e^{-\beta r}).
\]
From the table \cite[p.~245]{Schiff} we get 
\begin{equation}
\mathcal{L}^{-1}\left\{ e^{-\beta s}\right\} =\delta(t-\beta),\label{eq:j1}
\end{equation}
Also on page 263 of \cite{Schiff} we find
\begin{equation}
\mathcal{L}^{-1}\left\{ e^{-\beta s}-e^{-\beta r}\right\} =\begin{cases}
\frac{\alpha\beta}{\sqrt{t^{2}-\beta^{2}}}\,J_{1}\left(\alpha\sqrt{t^{2}-\beta^{2}}\right), & t>\beta,\\
0, & 0<t<\beta.
\end{cases}\label{eq:j2}
\end{equation}
Subtracting \eqref{eq:j2} from \eqref{eq:j1} we obtain the expected
inverse Laplace transform.\hfill\end{proof}

\noindent Now we are ready to prove the convergence result for DNWR in 2D:
\begin{theorem}[Convergence of DNWR in 2D]\label{DNwaveTheorem2D} 
Let $\theta=1/2$.
For $T>0$ fixed, the DNWR algorithm \eqref{DN2dwave1}-\eqref{DN2dwave2}-\eqref{DN2dwave3} converges in
$k+1$ iterations, if the time window length $T$ satisfies $T/k<h_{\min}/c$,
$c$ being the wave speed.
\end{theorem}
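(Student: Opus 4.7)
The plan is to reduce the 2D problem to a one-parameter family of 1D-type recursions via the Fourier sine expansion in $y$ already introduced in the text, then repeat the Laplace-transform argument of Theorem \ref{DNwavemulti} almost verbatim, with the symbol $s/c$ replaced by $\mu_{n}(s):=\sqrt{s^{2}/c^{2}+n^{2}}$. The shifting property \eqref{Lformula}, which was the engine of the 1D finite-step conclusion, is replaced by the support property of the kernel $\chi$ furnished by Lemma \ref{Lemma28}, namely $\chi(\alpha,\beta,t)\equiv 0$ for $t<\beta$.

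First, fix a mode $n\geq 1$. Applying the Laplace transform in $t$ to \eqref{DNwave2dlap} gives the ODE $\hat{U}_{xx}^{k}=\mu_{n}(s)^{2}\hat{U}^{k}$, so the subdomain solutions in Fourier--Laplace space are obtained from the 1D formulas in the proof of Theorem \ref{DNwavemulti} by the formal substitution $s/c\leftarrow \mu_{n}(s)$. Setting $\rho_{i}^{n}:=\sinh(h_{i}\mu_{n}(s))$ and $\lambda_{i}^{n}:=\cosh(h_{i}\mu_{n}(s))$, the transmission-and-update system reduces to exactly the matrix recursion \eqref{DNmwaveupdate} on the Fourier--Laplace traces $\hat{G}_{i}^{k}(n,s)$, with $\rho_{i},\lambda_{i}$ replaced throughout by $\rho_{i}^{n},\lambda_{i}^{n}$.

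Second, I would re-run the binomial expansion of the entries $\hat{k}_{i,l}(n,s)$ performed in the 1D proof: every identity used is symbol-blind, so after $k$ iterations one obtains, as in \eqref{DNminduct1}--\eqref{DNmfinalind}, a representation
\[
\hat{G}_{i}^{k}(n,s)=\sum_{j=1}^{2m}\hat{R}_{i,j}^{k}(n,s)\,\hat{G}_{j}^{0}(n,s),
\]
where each $\hat{R}_{i,j}^{k}(n,s)$ is a convergent combination of terms of the form $e^{-\beta\mu_{n}(s)}$ with $\beta\geq k\,h_{\min}$ (the $\beta$'s being sums of subdomain widths). Inverting mode by mode, I rewrite $e^{-\beta\mu_{n}(s)}=\exp\bigl(-(\beta/c)\sqrt{s^{2}+c^{2}n^{2}}\bigr)$ and apply Lemma \ref{Lemma28} with $\alpha=cn$ to obtain
\[
\mathcal{L}^{-1}\bigl\{e^{-\beta\mu_{n}(s)}\bigr\}(t)=\chi(cn,\beta/c,t),
\]
which vanishes on $(0,\beta/c)$. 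Since time convolution adds lower-support bounds, each piece $R_{i,j}^{k}(n,\cdot)\ast G_{j}^{0}(n,\cdot)$ vanishes on $(0,k h_{\min}/c)$; summing the sine series in $y$ then gives $g_{i}^{k}(y,t)=0$ on $(0,k h_{\min}/c)$ for every $i$, and under the hypothesis $T<k h_{\min}/c$ the $(k+1)$-st iterate coincides with the exact (zero) solution on $(0,T)$.

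The main obstacle is the combinatorial bookkeeping in the second step: one has to verify that the binomial expansions of the $\hat{k}_{i,l}(n,s)$ in $e^{-h_{i}\mu_{n}(s)}$ only produce exponents $\beta$ that accumulate additively from one iteration to the next, so that no accidental cancellation in the matrix product introduces a surviving term with $\beta<k h_{\min}$. This is exactly the tracking carried out in the 1D proof leading to \eqref{DNmfinalind}, and the mode parameter $n$ enters only through the symbol $\mu_{n}(s)$ and not through the algebra. A minor technical point is to justify the termwise interchange of the $n$-sum with the Laplace inversion; this is legitimate because the kernel $\chi$ from Lemma \ref{Lemma28} has the $\delta$-plus-$J_{1}$ representation, and the $J_{1}$-piece is uniformly bounded on any compact time interval, while the $\delta$-piece contributes only shifted copies of the (smooth) initial Fourier data.
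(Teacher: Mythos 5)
Your proposal is correct and follows essentially the same route as the paper's proof: Fourier sine expansion in $y$, Laplace transform in $t$, reduction to the 1D recursion \eqref{DNmfinalind} with $s/c$ replaced by $\sqrt{s^{2}/c^{2}+n^{2}}$, and the support property of $\chi$ from Lemma \ref{Lemma28} to conclude that each mode $G_{i}^{k}(n,\cdot)$ vanishes on $(0,kh_{\min}/c)$. Your added remarks on tracking the exponents through the matrix products and on interchanging the $n$-sum with the inversion are sensible points of care that the paper leaves implicit, but they do not change the argument.
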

\begin{proof}
We take Laplace transforms in $t$ of \eqref{DNwave2dlap} to get
\[
(s^{2}+c^{2}n^{2})\hat{U}_{m+1}^{k}-c^{2}\frac{d^{2}\hat{U}_{m+1}^{k}}{dx^{2}}=0.
\]
We obtain similar sequence of equations by applying Fourier series first and then Laplace transform to \eqref{DN2dwave2}. We now treat each $n$ as in the one-dimensional analysis in the
proof of Theorem \ref{DNwavemulti}, where the recurrence relation
\eqref{DNmfinalind}
of the form 
\begin{equation}\label{Update2dDN}
\hat{g}_{i}^{k}(s)=\sum_{j}\hat{r}_{i,j}^{k}(s)\,\hat{g}_{j}^{0}(s)
\end{equation}
become for each $n=1,2,\ldots$
\begin{equation}
\hat{G}_{i}^{k}(n,s)=\sum_{j}\hat{r}_{i,j}^{k}\left(\sqrt{s^{2}+c^{2}n^{2}}\right)\hat{G}_{j}^{0}(n,s).\label{DNwlapup}
\end{equation}
In the equation \eqref{Update2dDN} $\hat{r}_{i,j}^{k}(s)$ are linear combination of terms of the
form $e^{-\varrho s}$ for $\varrho\geq kh_{l}/c$ for some $l\in\left\{ 1,2,\ldots 2m+1\right\} $.
Therefore the coefficients $\hat{r}_{i,j}^{k}\left(\sqrt{s^{2}+c^{2}n^{2}}\right)$
are sum of exponential functions of the form $e^{-\varrho\sqrt{s^{2}+c^{2}n^{2}}}$
for $\varrho\geq kh_{l}/c$. Hence we use the definition of $\chi$
in \eqref{Xiab} to take the inverse Laplace transform of \eqref{DNwlapup},
and obtain
\[
G_{i}^{k}(n,t)=\sum_{j}\sum_{l}\chi(cn,\varrho_{l,j},t)*G_{j}^{0}(n,t),
\]
with $\varrho_{l,j}\geq kh_{\min}/c$. So it is straightforward that
for $t<kh_{\min}/c$, $G_{i}^{k}(n,t)=0$ for each $n$, since the
function $\chi$ is zero there by Lemma \ref{Lemma28}. Therefore
the update functions $g_{i}^{k}(y,t)$, given by $g_{i}^{k}(y,t)=\sum_{n\geq1}G_{i}^{k}(n,t)\sin(ny)$
are also zero for all $i\in\left\{ 1,\ldots,2m\right\}$. Hence
one more iteration produces the desired solution on the entire domain.
\hfill\end{proof}

\section{Numerical Experiments}\label{Section7}
We show some experiments for the DNWR algorithm in the spatial domain
$\Omega=(0,5)$, for the problem $\partial_{t}u=\Delta u,$ with initial
condition $u_{0}(x)=x(5-x)$ and boundary conditions $u(0,t)=t^{2},u(5,t)=te^{-t}$.
We discretize the heat equation using standard centered
finite differences in space and backward Euler in time on a grid with
$\Delta x=2\times10^{-2}$ and $\Delta t=4\times10^{-3}$. In the first experiment we apply the DNWR for a decomposition into
five subdomains and for three different time windows $T=0.2, T=2$
and $T=8$, whereas for a fixed time $T=2$ we run another experiment
for three to six equal subdomains. In Figure \ref{FigDNmA3expr}, on the left panel, we show the convergence estimates in the five-subdomain case as a function of time $T$, whereas on the right panel, we show the convergence for $T=2$ as we vary the number of subdomains. We observe superlinear convergence as predicted by Theorem \ref{TheorDNmheat}, and for small $T$ the estimate is quite sharp. We also see that the convergence slows down as the number of subdomains is increased.
\begin{figure}
\centering
\includegraphics[width=0.49\textwidth]{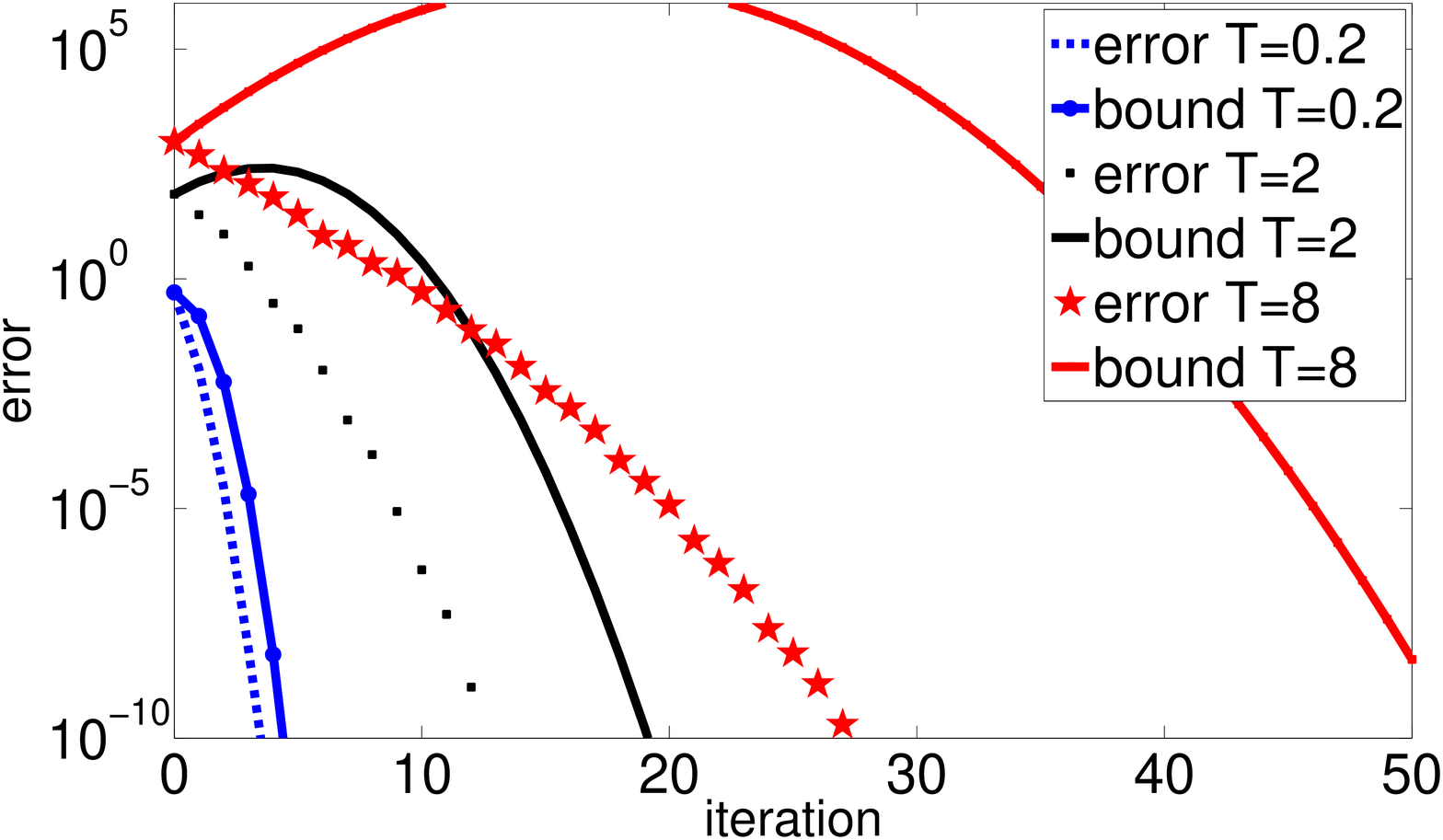}\includegraphics[width=0.49\textwidth]{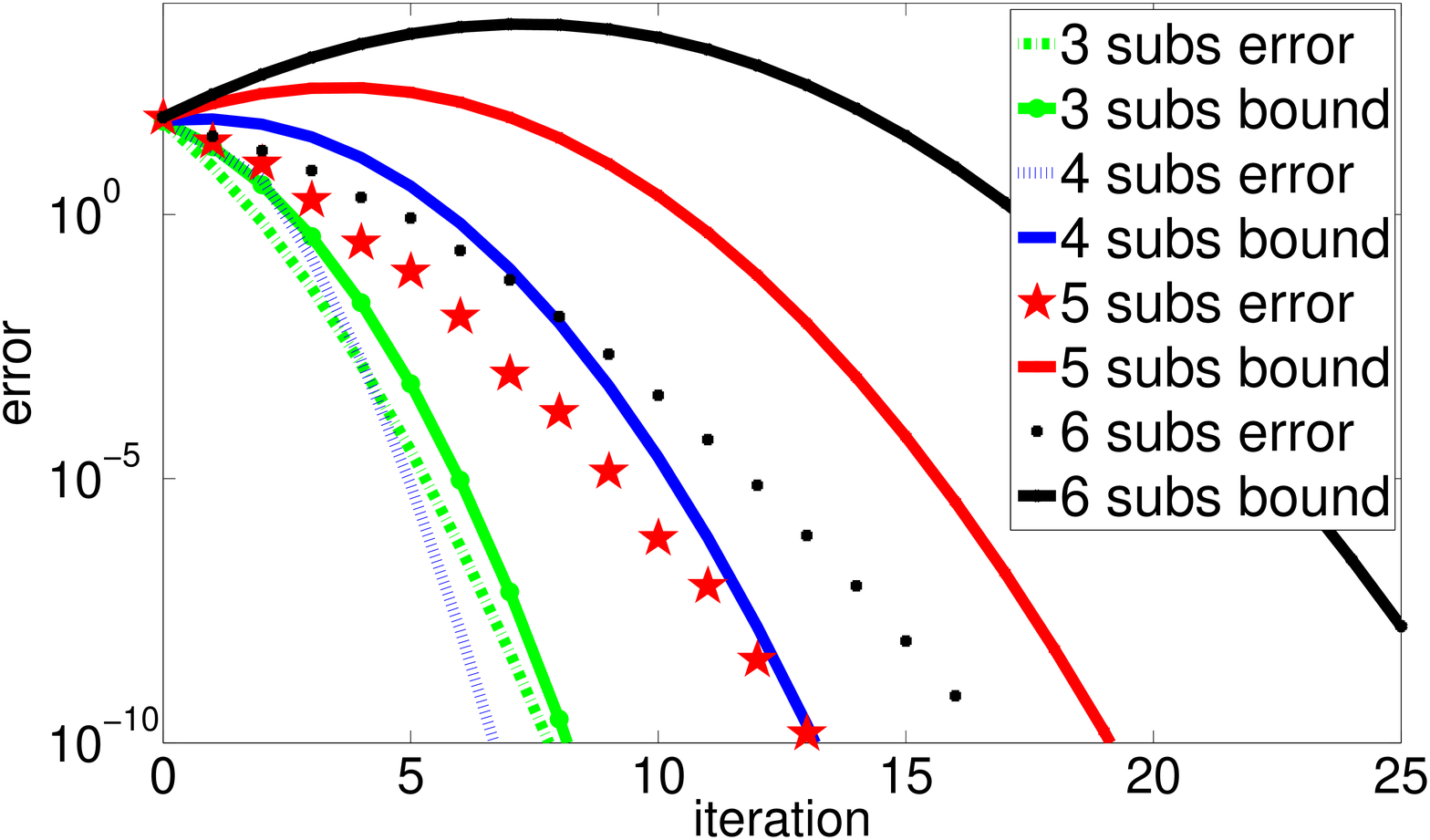}
\caption{Convergence estimates of DNWR for $\theta=1/2$, on the left for various values of $T$ for five subdomains, and on the right for various number of subdomains for $T=2$}
\label{FigDNmA3expr}
\end{figure}
\begin{figure}
  \centering
  \includegraphics[width=0.49\textwidth]{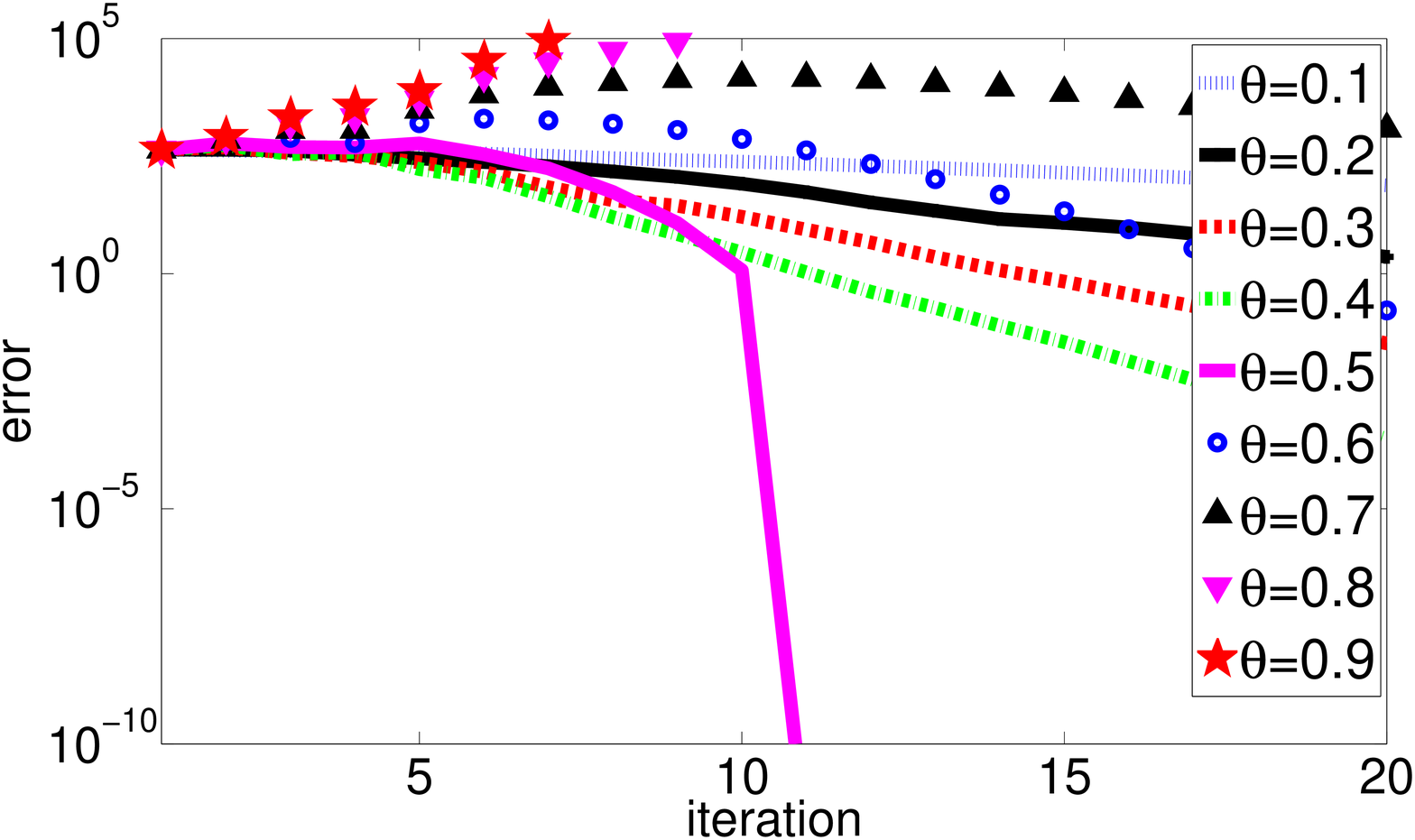}
  \includegraphics[width=0.49\textwidth]{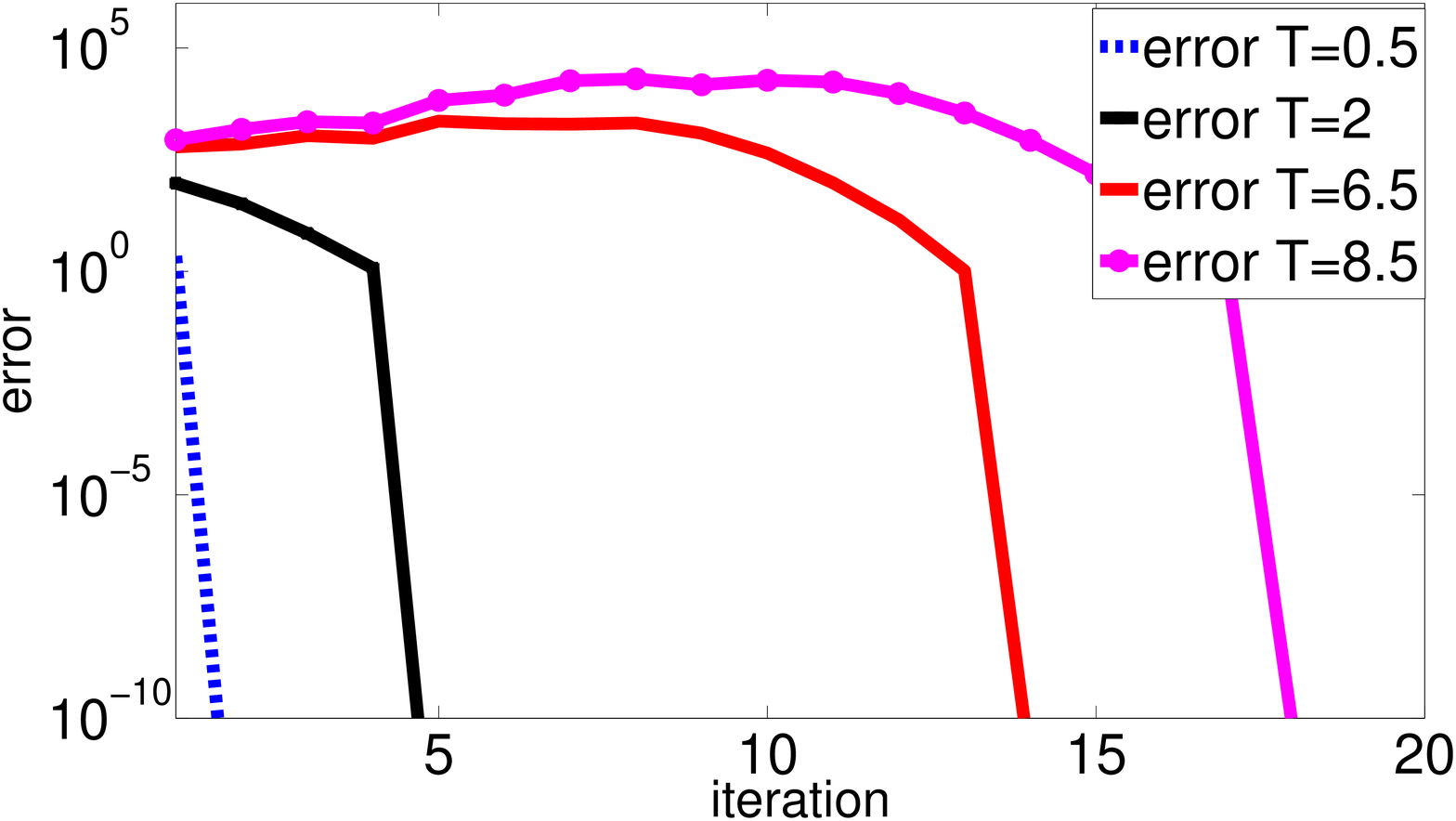}
  \caption{Arrangement A3: convergence of DNWR with various values of $\theta$ for $T=5$ on the left, and for various lengths $T$ of the time window and $\theta=1/2$
on the right}
  \label{NumFig4}
\end{figure} 

We now consider the following model wave equation to see the convergence behavior of the DNWR algorithm with multiple subdomains:
\begin{align}
\partial_{tt}u & =\partial_{xx}u, &  & x\in(0,5),t>0,\nonumber \\
u(x,0) & =0,\:u_{t}(x,0)=0, &  & 0<x<5,\label{Wavenumericmodel}\\
u(0,t) & =t^{2},\:u(5,t)=t^{2}e^{-t}, &  & t>0,\nonumber 
\end{align}
which is discretized using centered finite differences in both space
and time on a grid with $\Delta x=\Delta t=2{\times}10^{-2}$. We
take the initial guesses $g_{j}^{0}(t)=t^{2},t\in(0,T]$ for $1\leq j\leq4$,
and consider a decomposition of $(0,5)$ into five unequal subdomains,
whose widths $h_{i}$ are $1,0.5,1.5,1,1$ respectively, so that $h_{\min}=0.5$.
On the left panel of Figure \ref{NumFig4}, we show the convergence for different values of the parameter $\theta$ for $T=5$, and on the right panel the error curves for the best parameter $\theta=1/2$ for different
time window length $T$. These convergence curves justify our convergence
result for the arrangement A3 in Theorem \ref{DNwavemulti}. We observe two-step convergence for $\theta=1/2$ for a sufficiently small time window $T$. Coincidentally we observe exactly the same convergence behavior for other two corresponding arrangements A1 and A2 from Subsection \ref{Section2a}, see Figure \ref{NumFig5} and Figure \ref{NumFig6} respectively.
\begin{figure}
  \centering
  \includegraphics[width=0.49\textwidth]{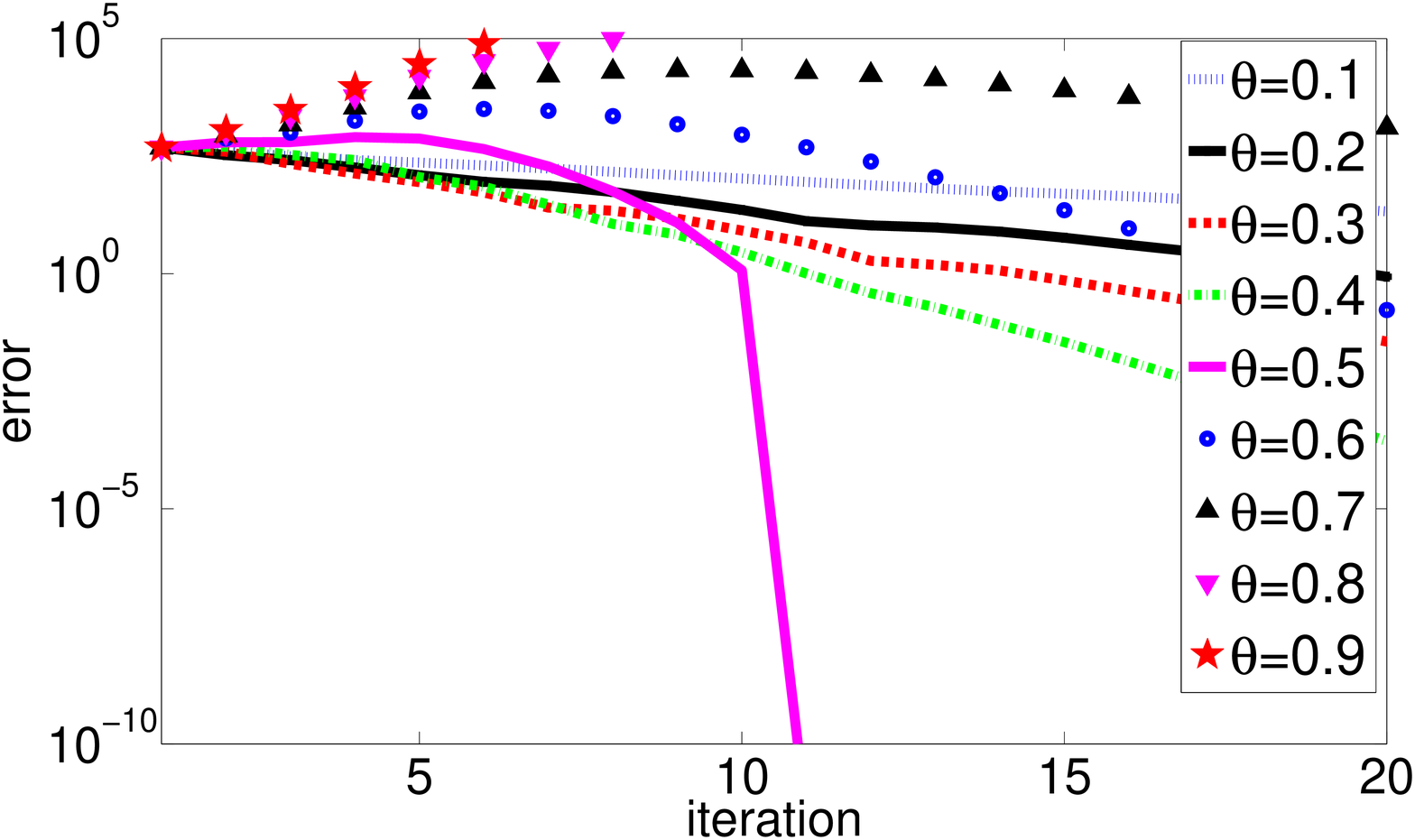}
  \includegraphics[width=0.49\textwidth]{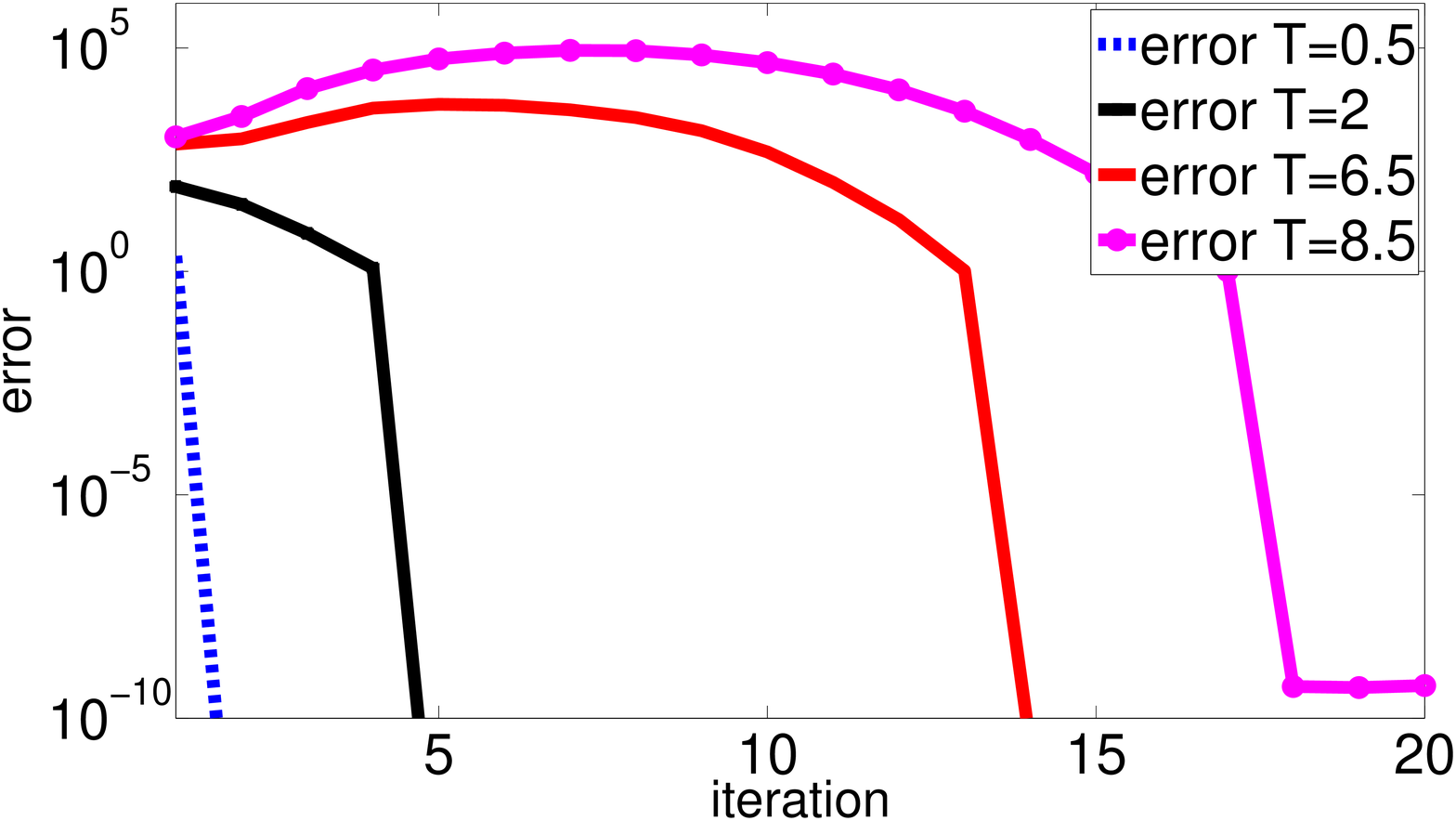}
  \caption{Arrangement A1: convergence of DNWR with various values of $\theta$ for $T=5$ on the left, and for various lengths $T$ of the time window and $\theta=1/2$
on the right}
  \label{NumFig5}
\end{figure} 
\begin{figure}
  \centering
  \includegraphics[width=0.49\textwidth]{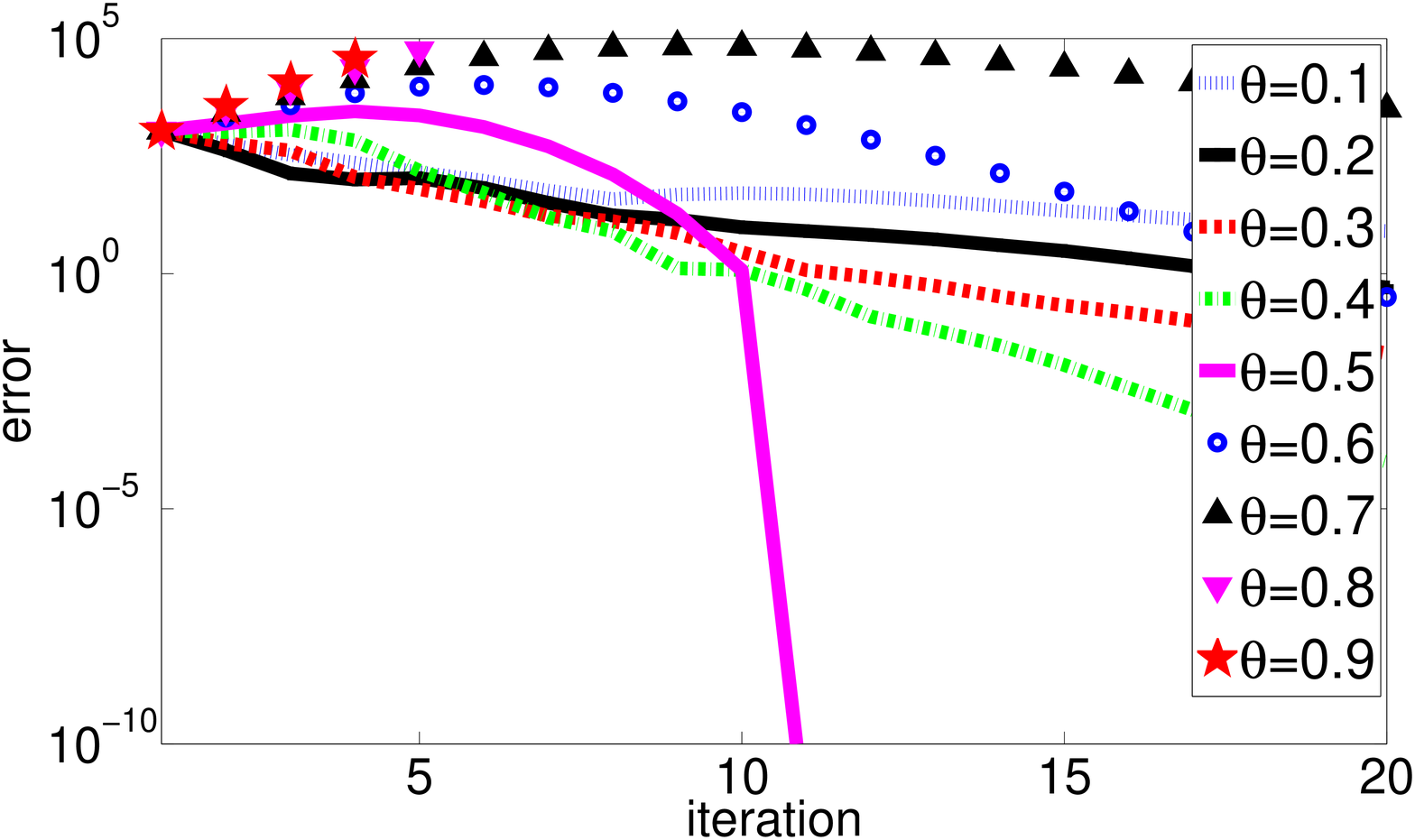}
  \includegraphics[width=0.49\textwidth]{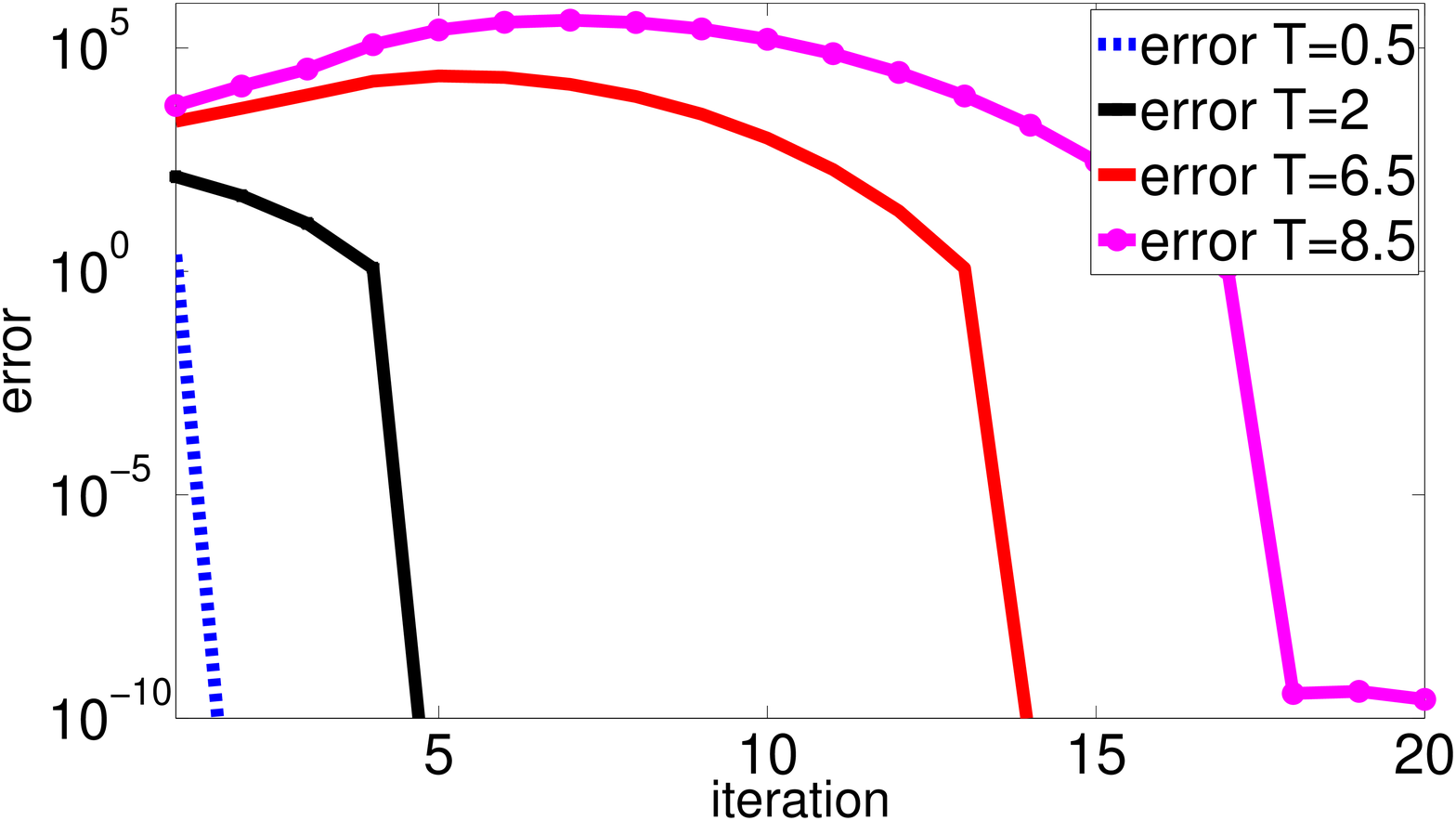}
  \caption{Arrangement A2: convergence of DNWR with various values of $\theta$ for $T=5$ on the left, and for various lengths $T$ of the time window and $\theta=1/2$
on the right}
  \label{NumFig6}
\end{figure} 

Next we show an experiment for the DNWR algorithm in two dimension
for the following model problem 
\[
\partial_{tt}u-\left(\partial_{xx}u+\partial_{yy}u\right)=0,u(x,y,0)=xy(x-1)(y-\pi)(5x-2)(4x-3),u_{t}(x,y,0)=0,
\]
with homogeneous Dirichlet boundary conditions. We discretize the
wave equation using the centered finite difference in both space and
time (Leapfrog scheme) on a grid with $\Delta x=5{\times}10^{-2},\Delta y=16{\times}10^{-2},\Delta t=4{\times}10^{-2}$.
We decompose our domain $\Omega:=(0,1)\times(0,\pi)$ into three non-overlapping subdomains $\Omega_{1}=(0,2/5)\times(0,\pi)$, $\Omega_{2}=(2/5,3/4)\times(0,\pi)$, $\Omega_{3}=(3/4,1)\times(0,\pi)$. As initial guesses, we take $w_{i}^{0}(y,t)=t\sin(y)$.
In Figure \ref{NumFig8} we plot the convergence curves for different values of the parameter $\theta$ for $T=2$ on the left panel, and on the
right the results for the best parameter $\theta=1/2$ for different
time window length $T$.
\begin{figure}
  \centering
  \includegraphics[width=0.49\textwidth]{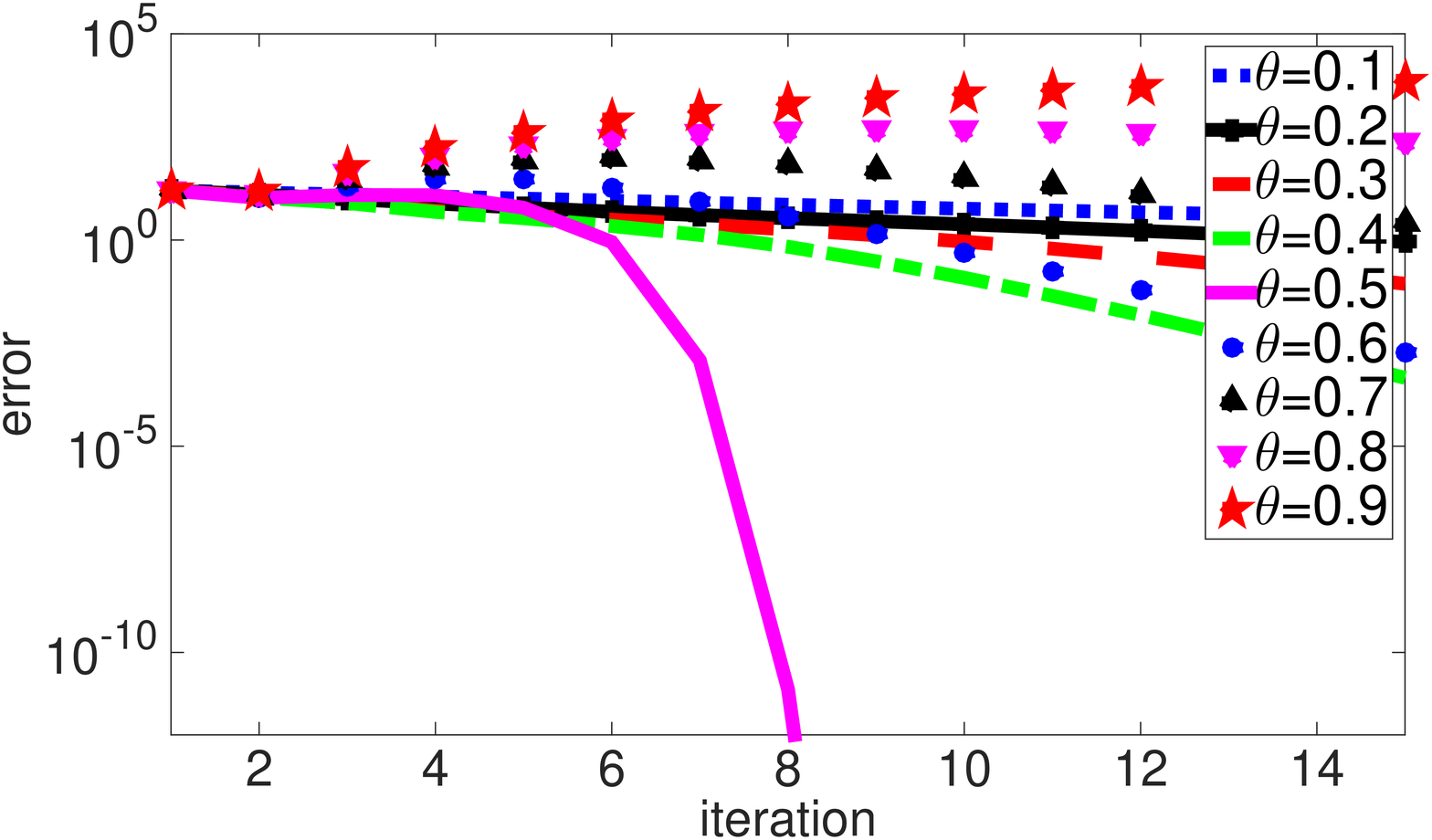}
  \includegraphics[width=0.49\textwidth]{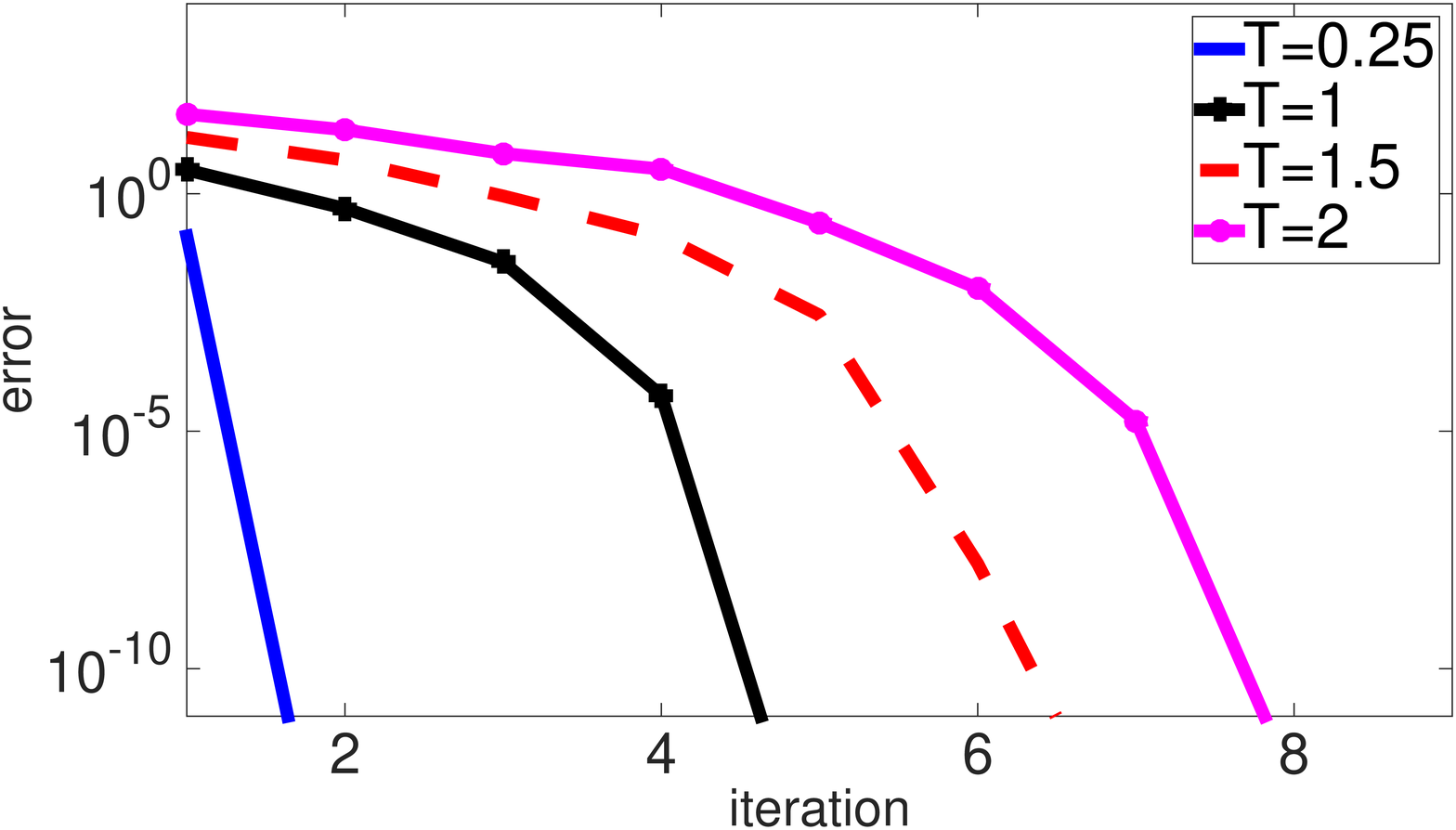}
  \caption{Convergence of DNWR in 2D: curves for different values of $\theta$ for $T=2$ on the left, and for various time lengths $T$ and $\theta=1/2$ on the right}
  \label{NumFig8}
\end{figure}   

We now compare in Figure \ref{NumFig10} the performance of the DNWR algorithm with its counterpart NNWR method \cite{Mandal2,GKM3} and the SWR algorithms with and without overlap \cite{GHN,GH2}. Here we consider the model problem
\[
\partial_{tt}u-\left(\partial_{xx}u+\partial_{yy}u\right)=0,u(x,y,0)=0=u_{t}(x,y,0),
\]
with Dirichlet boundary conditions $u(0,y,t)=t^{2}\sin(y),u(1,y,t)=y(y-\pi)t^{3}$ and $u(x,0,t)=0=u(x,\pi,t)$. We decompose our domain $\Omega:=(0,1)\times(0,\pi)$ for the two subdomains experiment into $\Omega_{1}=(0,3/5)\times(0,\pi)$ and $\Omega_{2}=(3/5,1)\times(0,\pi)$, and for the three subdomains experiment into $\Omega_{1}=(0,2/5)\times(0,\pi)$, $\Omega_{2}=(2/5,3/4)\times(0,\pi)$, $\Omega_{3}=(3/4,1)\times(0,\pi)$. We take a random initial guess to start the iteration, and for the overlapping SWR we use an overlap of length $2\Delta x$ in all the experiments. We implement first order methods with one parameter in optimized SWR iterations; for more details see \cite{GH2}. On the left panel of Figure \ref{NumFig10} we plot the comparison curves for two subdomains, and the same for three subdomains on the right. 
\begin{figure}
  \centering
  \includegraphics[width=0.49\textwidth]{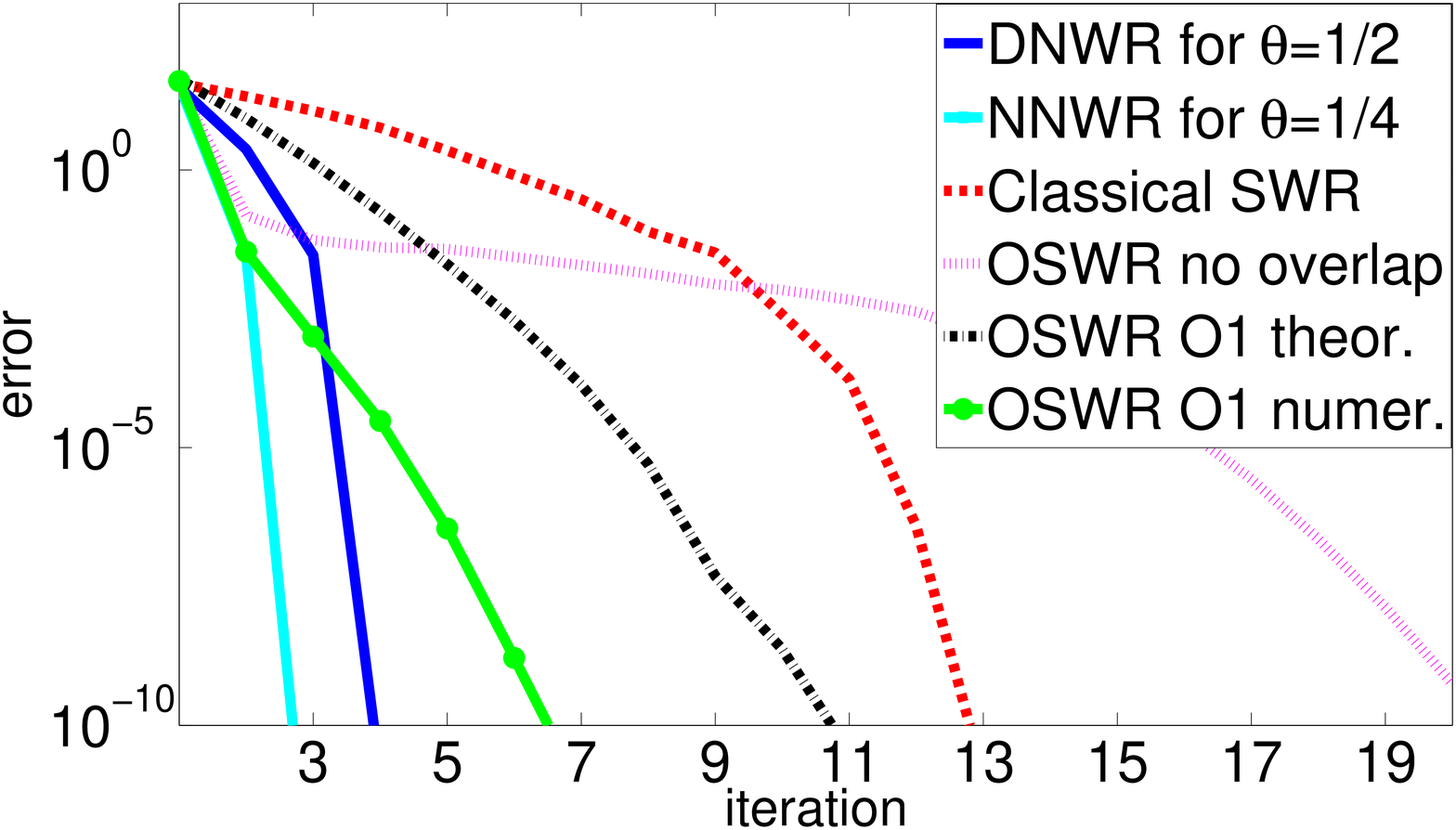}
  \includegraphics[width=0.49\textwidth]{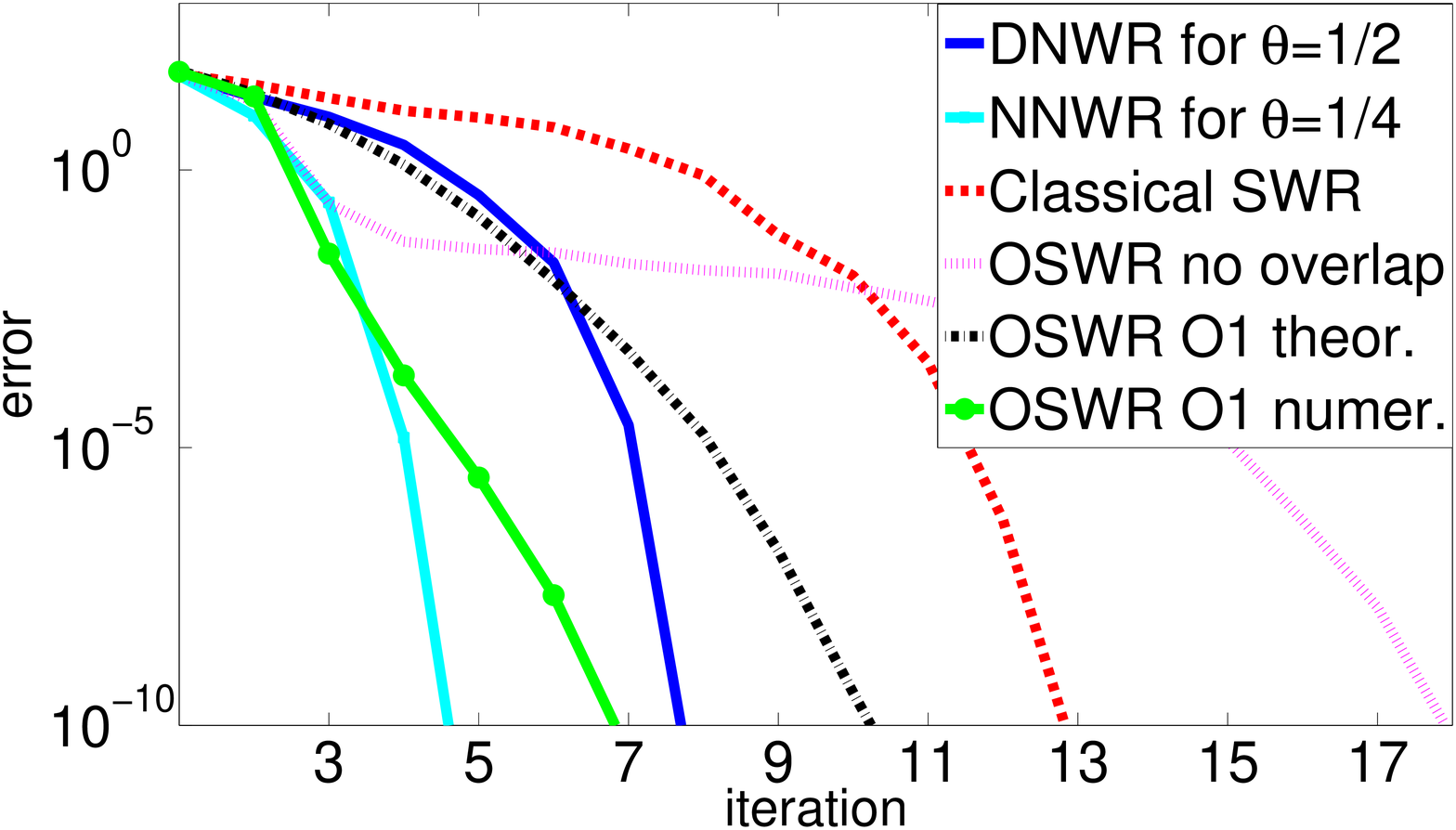}
  \caption{Comparison of DNWR, NNWR, and SWR for $T=2$ in 2D for two subdomains on the left, and three subdomains on the right}
  \label{NumFig10}
\end{figure}

Here we consider a comparison of performances between the DNWR and the NNWR algorithms for the wave equation. Table \ref{TableSWR2} gives a summary of the theoretical results from Section \ref{Section3} and \ref{Section6} and \cite{Mandal2,GKM3}, to indicate the maximum number of   
iterations needed for the 1D and 2D wave equation to converge to the exact solution.
\begin {table}\begin{center} \caption {Comparison of steps needed for convergence of DNWR and NNWR for the wave equation. \label{TableSWR2}} \begin{tabular}{|c|c|c|c|} \hline Methods & 2 subdomains, 1D & Many subdomains, 1D & Many subdomains, 2D \tabularnewline \hline DNWR & $T\leq 2kh_{\min}/c$ & $T\leq kh_{\min}/c$ & $T< kh_{\min}/c$\tabularnewline \hline NNWR & $T\leq 4kh_{\min}/c$ & $T\leq 2kh_{\min}/c$ & $T< 2kh_{\min}/c$\tabularnewline \hline \end{tabular} \end{center} \end {table} 
\begin {table} \begin{center} \caption {Propagation speed and time steps for different subdomains.}\label{Table23} \begin{tabular}{|c|c|c|c|} \hline & $\Omega_1$ & $\Omega_2$ & $\Omega_3$ \tabularnewline \hline wave speed $c$& $1/4$ & $2$ & $1/2$ \tabularnewline \hline time grids $\Delta t_i$ & $13\times 10^{-2}$ & $39\times 10^{-3}$ & $1\times 10^{-1}$ \tabularnewline \hline \end{tabular} \end{center} \end {table} 

Next we show a numerical experiment for the DNWR algorithm with different
time grids for different subdomains and discontinuous wave speed across
interfaces. We consider the model problem 
\[
\partial_{tt}u-c^{2}\partial_{xx}u=0,u(x,0)=0=u_{t}(x,0),
\]
with Dirichlet boundary conditions $u(0,t)=t^{2},u(6,t)=t^{3}$. Suppose
the spatial domain $\Omega:=(0,6)$ is decomposed into three equal
subdomains $\Omega_{i},i=1,2,3$, and the random initial guesses are
used to start the DNWR iteration. For the spatial discretization,
we take a uniform mesh with size $\Delta x=1{\times}10^{-1}$, and
for the time discretization, we use non-uniform time grids $\Delta t_{i},i=1,2,3$, as given in Table \ref{Table23}. 
For the non-uniform mesh grid, boundary data is transmitted from one subdomain to a neighboring subdomain by introducing a suitable time projection. For two dimensional problems, the interface is one dimensional. Using ideas of merge sort one can compute the projection with linear cost, see \cite{GJap} and the references therein. In Figure \ref{FigDNnonUnif}(a) we show the non-uniform time steps for different subdomains. Figure \ref{FigDNnonUnif}(b), (c) and (d) give the three-step convergence of the DNWR algorithm for $T=2$.
\begin{figure}
\centering
\begin{tabular}{cc}
\subfloat[Non-uniform time-stepping]{\includegraphics[width=0.49\textwidth]{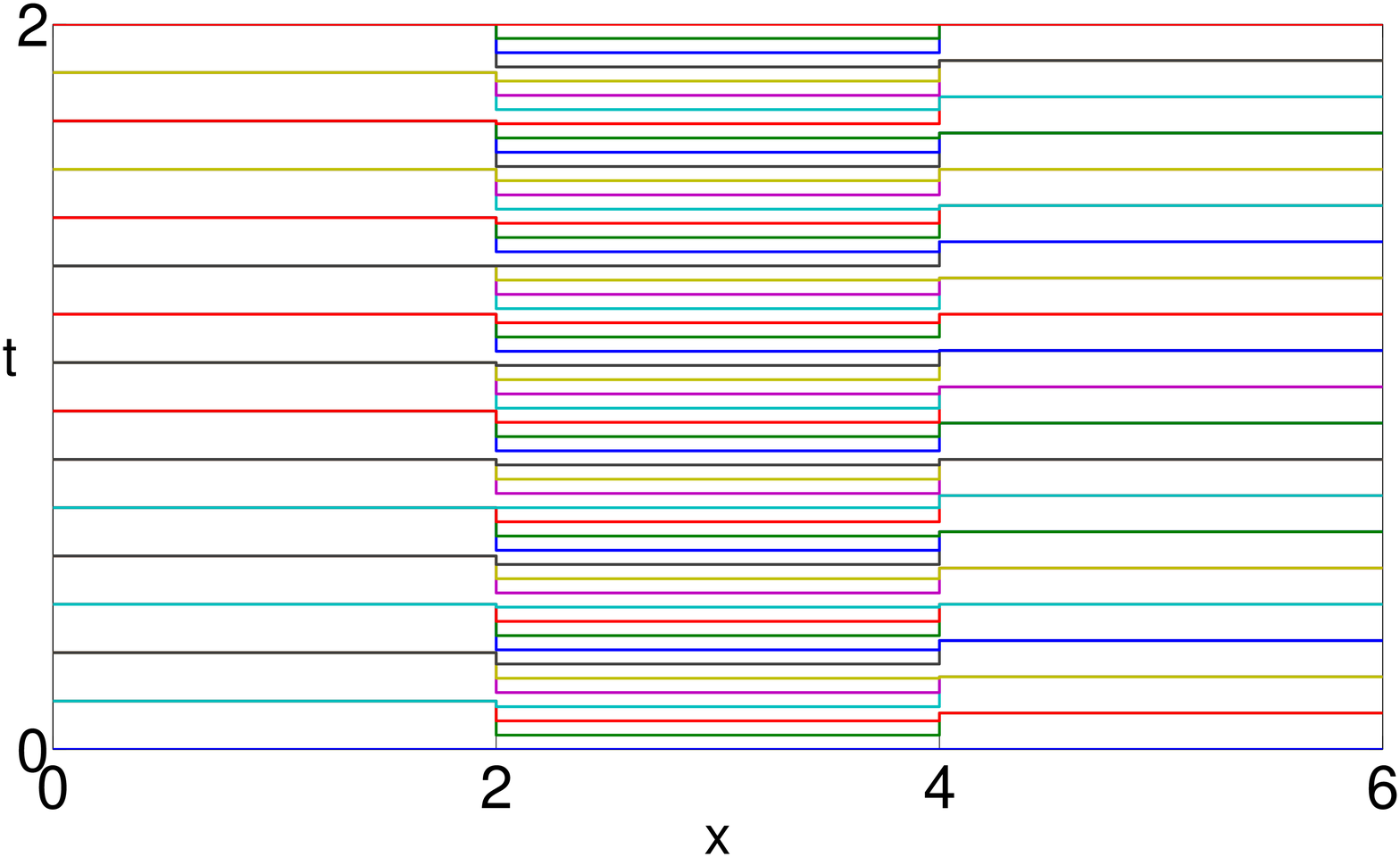}} & \subfloat[DNWR: 1st iteration]{\includegraphics[width=0.49\textwidth]{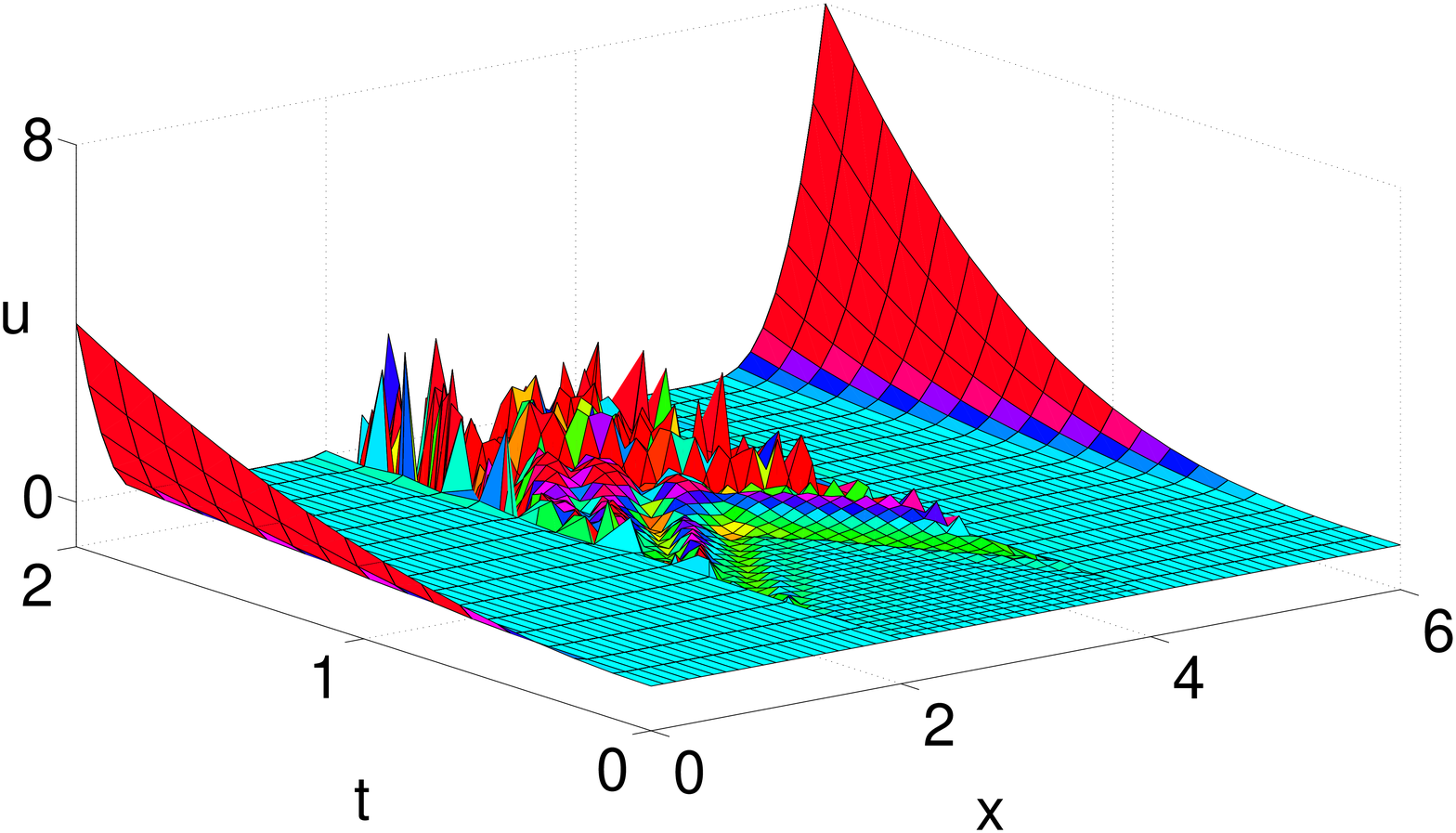}}  \\
\subfloat[DNWR: 2nd iteration]{\includegraphics[width=0.49\textwidth]{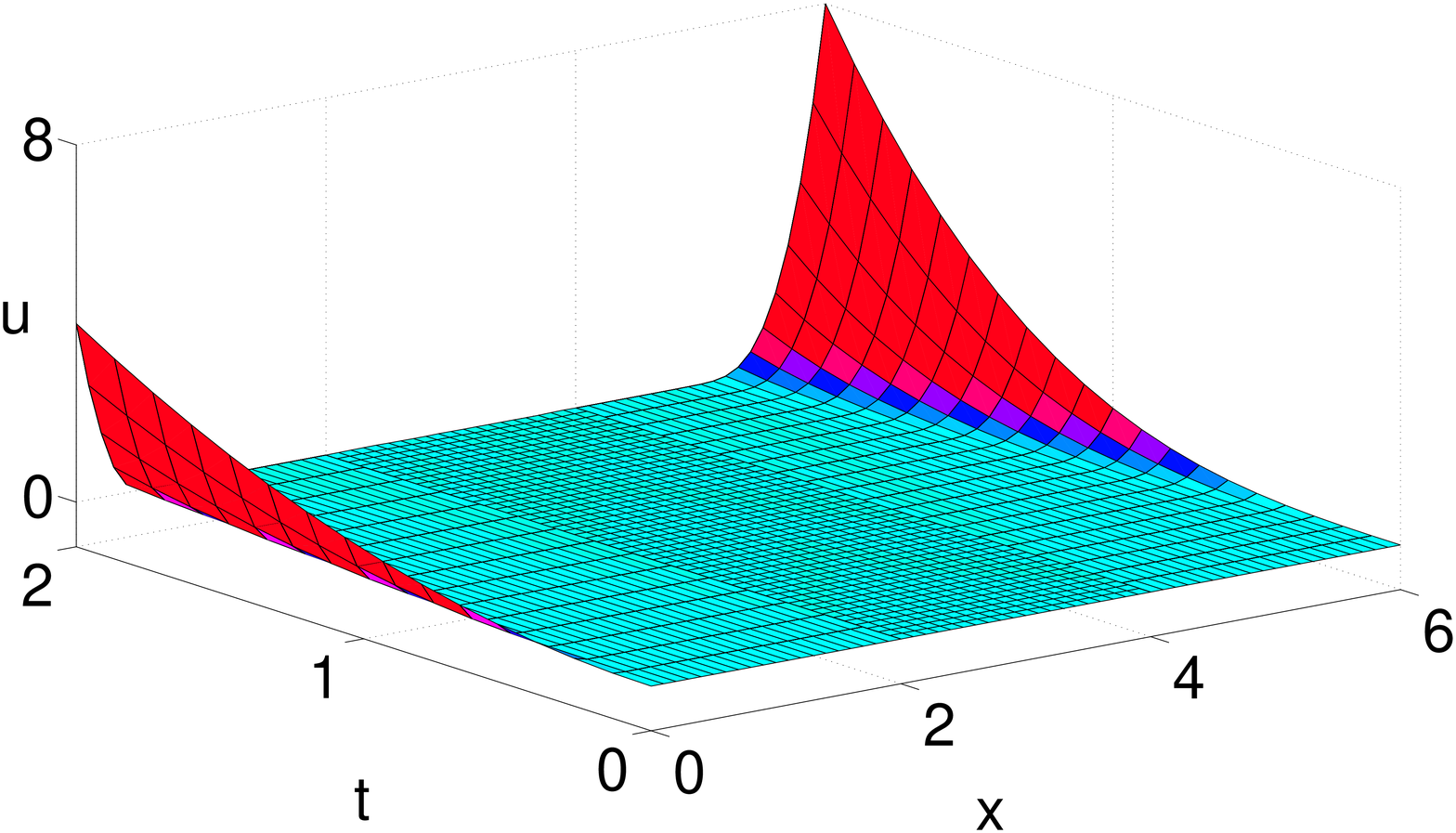}} & \subfloat[DNWR: 3rd iteration]{\includegraphics[width=0.49\textwidth]{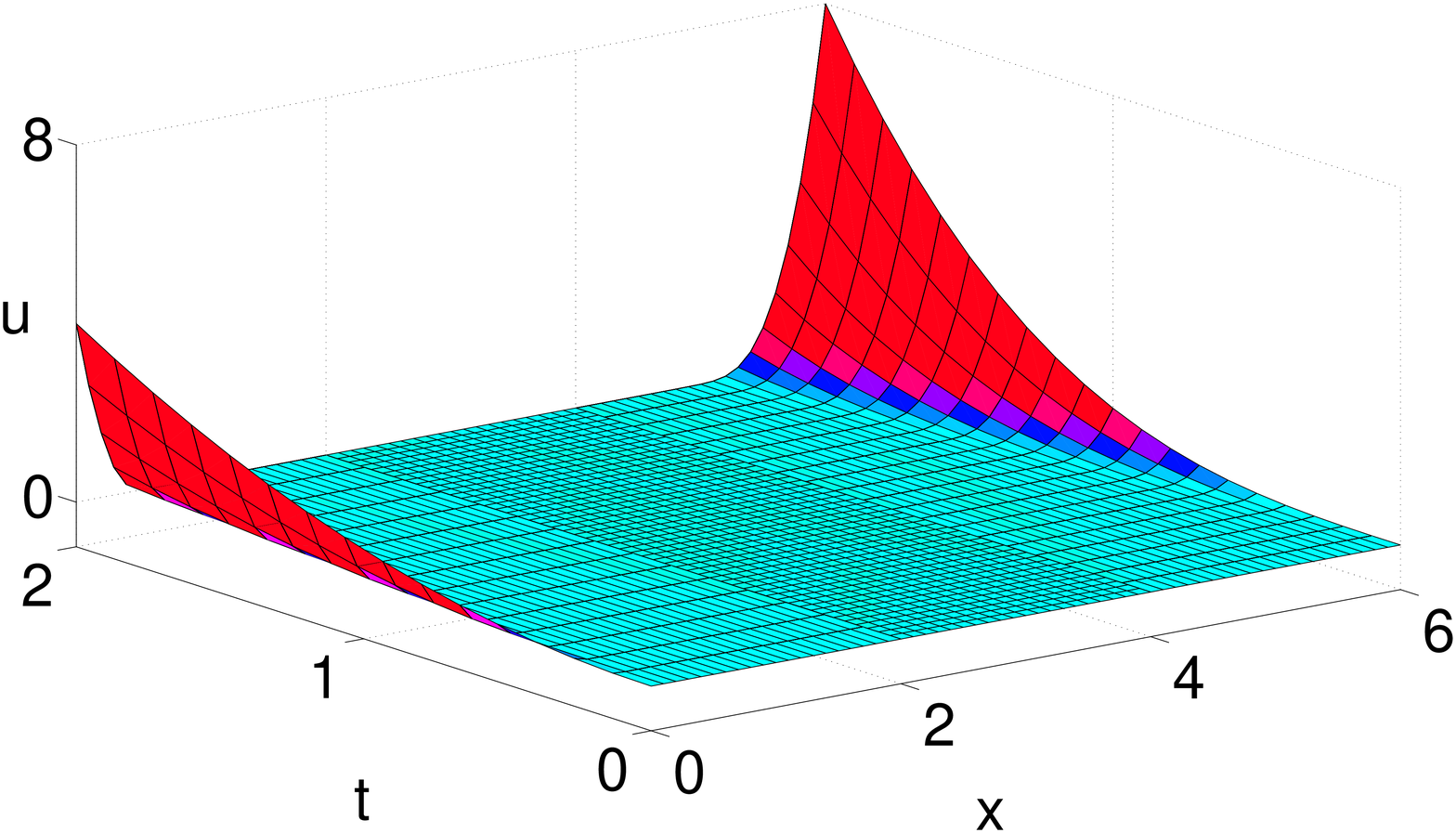}}
\end{tabular}
\caption{Convergence of the DNWR method applied to the wave equation for non-uniform time steps for $\theta=1/2$ for $T=2$}
\label{FigDNnonUnif}
\end{figure}

\section{Conclusions }

We defined the DNWR algorithm for multiple subdomains for parabolic and hyperbolic problems, and analyzed its convergence
properties for one dimensional heat and wave equations. We proved
using numerical experiments that for a particular choice of the relaxation
parameter, $\theta=1/2$, superlinear convergence can be obtained for heat equation, whereas we showed finite step convergence for wave equation.
In fact, the algorithm can be used as a two-step method for the wave equation, choosing the time window lengh $T$ small enough. We have also extended the DNWR algorithm for 2D wave equation, and analyzed
its convergence properties. We have also shown using numerical experiments
that among DNWR and NNWR, the second converges faster. But in comparison
to DNWR, the NNWR has to solve twice the number of subproblems (once
for Dirichlet subproblems, and once for Neumann subproblems) on each
subdomain at each iteration. Therefore the computational cost is almost
double for the NNWR than for the DNWR algorithm at each step. However,
we get better convergence behavior with the NNWR in terms of iteration
numbers. Finally we presented a comparison of performences between
the DNWR, NNWR and Schwarz WR methods, and showed that the DNWR and
NNWR converge faster than optimized SWR at least for higher dimensions. 
\afterpage{\clearpage}

\bibliographystyle{siam}
\bibliography{paperdnm}


\end{document}